\definecolor{Red}{rgb}{1,0,0}
\definecolor{Blue}{rgb}{0,0,1}
\newtheorem{theorem}{Theorem}
\newtheorem{definition}{Definition}
\newtheorem{lemma}{Lemma}
\newtheorem{prop}{Proposition}
\newtheorem{example}{Example} 
\newtheorem{question}{Question} 
\def\beq{ \begin{equation} }
\def\eeq{ \end{equation} }
\def\square{\vcenter{\vbox{\hrule height .4pt
  \hbox{\vrule width .4pt height 5pt \kern 5pt
        \vrule width .4pt} \hrule height .4pt}}}
\def\var{\hbox{var}\,}
\def\P{{\mathbb P}}     
\def\E{{\mathbb E}}     
\def\<{{\langle}} 
\def\>{{\rangle}} 
\newcommand{\spread}{\mathrm{Spr}}
\renewcommand{\var}{\mathrm{Var}}
\newcommand{\cov}{\mathrm{Cov}}
\newcommand{\tv}{\mathrm{TV}}
\newcommand{\path}{\mathrm{P}}
\begin{document}
\title{Necessary and sufficient conditions for consistent
	root reconstruction in Markov models on trees
\footnote{
		Keywords: 
        Markov models on trees,
reconstruction problem,
concentration inequalities,
consistent estimation,
information-theoretic bounds,
applications to phylogenetics.
}}
\author{Wai-Tong (Louis) Fan\footnote{Department of Mathematics, 
		UW--Madison.
		Work supported by NSF grant DMS--1149312 (CAREER) to SR.}
	\and
	Sebastien Roch\footnote{Departments of Mathematics, 
		UW--Madison.
		Work supported by NSF grants DMS-1149312 (CAREER) and DMS-1614242.}}

\date{\today}
\maketitle

\begin{abstract}
We establish necessary and sufficient conditions for consistent root reconstruction in conti\-nuous-time Markov models with countable state space on bounded-height trees. 
Here a root state estimator is said to be consistent if the probability that it returns to the true root state converges to $1$ as the number of leaves tends to infinity. 
We also derive quantitative bounds on the error of reconstruction.
Our results answer a question of Gascuel and Steel \cite{GascuelSteel:10} and have implications for ancestral sequence reconstruction in a classical evolutionary model of nucleotide insertion and deletion \cite{thorne1991evolutionary}.
\end{abstract}

\thispagestyle{empty}

\clearpage


\section{Introduction}




\paragraph{Background}
In biology, the inferred evolutionary history of organisms and their relationships is depicted diagrammatically as a phylogenetic tree, that is, a rooted tree whose leaves represent living species and branchings indicate past speciation events~\cite{Felsenstein:04}. The evolution
of species features, such as protein sequences, linear arrangements of genes on a chromosome or the number of horns of a lizard, is commonly assumed to follow Markovian dynamics along this tree~\cite{Steel:16}. That is, on each edge of the tree, the state of the feature changes according to a continuous-time Markov process; at bifurcations, two independent copies of the feature evolve along the outgoing edges starting from the state at the branching point. The length of an edge is a measure of the expected amount of change along it. See Section~\ref{sec:defs} for a formal definition.

In this paper, we are concerned with the problem of inferring an ancestral state from observations at the leaves of a given tree under known Markovian dynamics. We refer to this problem, which has important applications in biology~\cite{thornton2004resurrecting,liberles2007ancestral}, as the {\bf root reconstruction problem}. Many rigorous results have been obtained in the finite state space case, although much remains to be understood; see, e.g.,~\cite{KestenStigum:66,BlRuZa:95,Ioffe:96a,EvKePeSc:00,Mossel:01,MosselPeres:03,BoChMoRo:06,sly2009reconstruction,bhatnagar2010reconstruction,bhatnagar2011reconstruction,Sly:11} for a partial list. Typically, one seeks an estimator of the root state which is strictly superior to random guessing---uniformly in the depth of the tree---under a uniform prior on the root~\cite{Mossel:01}. Whether such an estimator exists has been shown to hinge on a trade-off between the mixing rate of the Markov process (i.e., the speed at which information is lost) and the growth rate of the sequence of trees considered (i.e., the speed at which information is duplicated). In some cases, for instance two-state symmetric Markov chains on $d$-ary trees~\cite{KestenStigum:66,Ioffe:96a}, sharp thresholds have been established. 

\paragraph{Main results}
Here, we study the root reconstruction problem in an alternative setting where estimators with stronger properties can be derived. We consider sequences of nested trees with {\it uniformly bounded depths}. This is motivated by contemporary applications in evolutionary biology where the rapidly increasing availability of data from ever-growing numbers of organisms, particularly genome sequencing data, has allowed dense sampling of species within the same family or genus. This is sometimes referred to as the {\bf taxon-rich setting} and has been considered in a number of recent theoretical studies~\cite{GascuelSteel:10,ho2013,FanRoch:u}. As shown in~\cite{GascuelSteel:10}, a key difference with the traditional setting described above
is that, in the taxon-rich setting, {\bf consistent root state estimation} is possible. 
In this context, a consistent estimator is one whose probability of success tends to $1$ as the number of leaves goes to infinity.  See Section~\ref{sec:defs} for a formal definition.
In particular, for general finite-state-space Markov processes on ultrametric trees, i.e., trees whose leaves are equidistant from the root, Gascuel and Steel~\cite{GascuelSteel:10} give sufficient conditions for the existence of consistent root state estimators by introducing a notion of ``well-spread trees.'' 

Building on this work, we give both necessary and sufficient conditions for consistent root reconstruction for general trees and general Markov processes on countable state spaces, a question left open in~\cite{GascuelSteel:10}.
On an intuitive level, the greater the number of leaves, the more information we have about the root state. However, the leaves do not provide {\it independent} information due to the correlation arising from the partial overlap of the paths from the root to the leaves. 
In particular we cannot appeal, for instance, to the consistency of maximum likelihood estimation for independent samples~\cite{LehmannRomano:05}. We show however that, under a certain ``root density'' assumption we refer to as the big bang condition, one can identify a subset of leaves that are ``sufficiently independent.'' We also derive quantitative bounds on the error of reconstruction in terms of natural properties of the tree sequence and Markov process.






One applied motivation for our results, especially our consideration of countable state spaces, is ancestral sequence reconstruction in DNA evolution models accounting for nucleotide insertion and deletion. Our main theorem immediately gives necessary and sufficient conditions for the existence of consistent root estimators for a classical such model known as the TKF91 process~\cite{thorne1991evolutionary}. This is detailed in Section~\ref{sec:tkf}.
In this context, our work is also related to trace reconstruction, which corresponds roughly to the star tree case under simplified analogues to the TKF91 process. See, e.g.,~\cite{mitzenmacher2009survey} for a survey.
See also~\cite{andoni2012global} for related work in the phylogenetic setting.

\paragraph{Organization} Definitions and main results are stated in Sections~\ref{sec:defs} and~\ref{sec:results}. The connection between our big bang condition and the well-spread trees of~\cite{GascuelSteel:10} is established in Section~\ref{sec:spread}. Our impossibility result is proved in Section~\ref{sec:impossibility}, while our consistency result and error bound are detailed respectively in Sections~\ref{sec:consistency} and~\ref{section:rate}.

\subsection{Basic definitions}
\label{sec:defs}

\paragraph{Markov chains on trees}
We consider the following class of latent tree models arising in phylogenetics.
The model has two main components:
\begin{itemize}
\item The first component is a tree. More precisely, throughout, by a tree we mean a finite, edge-weighted, rooted tree $T = (V,E,\rho,\ell)$, where $V$ is the set of vertices, $E$ is the set of edges oriented away from the root $\rho$, and $\ell:E \to (0,+\infty)$ is a positive edge-weighting function. 
We denote by $\partial T$ the leaf set of $T$. 
No assumption is made on the degree of the vertices. We think of $T$ as a continuous object, where each edge $e$ is a line segment of length $\ell_{e}$ and whose elements we refer to as points. We let $\Gamma_T$ be the set of points of $T$.

\item The second component is a time-homogeneous, continuous-time Markov process taking values in a countable state space $\mathcal{S}$.
Without loss of generality, we let $\mathcal{S} = \{1,\ldots,|\mathcal{S}|\}$ in the finite case and $\mathcal{S} = \{1,2,\ldots\}$ in the infinite case.
	We denote by 
	$\mathbf{P}_t=(p_{ij}(t):\,i,j\in\mathcal{S})$ the transition matrix at time $t\in [0,\infty)$, 
that is, $p_{ij}(t)$ is the probability that the
	state at time $t$ is $j$ given that it was $i$ at time $0$. We also let
	\begin{equation}
	\label{eq:def-row}
	\mathbf{p}^{i}(t)=(p_{i1}(t),\,p_{i2}(t),\,\ldots),
	\end{equation} 
	be the $i$-th row in the transition matrix.
	We assume that $(\mathbf{P}_t)_t$ admits a $Q$-matrix $Q=(q_{ij}:\,i,j\in\mathcal{S})$ which is stable and conservative, that is, 
\begin{equation*}
q_{ij} :=    \left.\frac{d}{dt}p_{ij}(t)\right|_{t=0} \in[0,\infty) \qquad \forall i\neq 	j,
    \end{equation*}
and
\begin{equation}
	\label{eq:def-qi}
	q_i:=-q_{ii}=\sum_{j\neq i}q_{ij}\in [0,\infty), \qquad \forall i.
	\end{equation}
	See, e.g.,~\cite[Chapter 2]{Liggett:10} or \cite{Anderson:91} for more background on continuous-time Markov chains.
 \end{itemize}
We consider the following stochastic process indexed by the points of $T$.
The root is assigned a state $X_{\rho}\in \mathcal{S}$, which is drawn from a probability distribution on $\mathcal{S}$. This state is then propagated down the tree according to the following recursive process. Moving away from the root, along each edge $e = (u,v) \in E$,
conditionally on the state $X_u$, we run the Markov process $\mathbf{P}_t$ started at $X_u$ for an amount of time $\ell_{(u,v)}$. We denote by $X_\gamma$ the resulting state at $\gamma \in e$. We call the process $\mathcal{X} = (X_\gamma)_{\gamma \in \Gamma_T}$ a \textbf{$\mathbf{P}_t$-chain on $T$}. For $i\in\mathcal{S}$, we let $\P^i$ be the probability law when the root state $X_{\rho}$ is $i$. If $X_{\rho}$ is chosen according to a distribution $\pi$, then we denote the probability law by $\P^{\pi}$. Note that the leaf distribution conditioned on the root state is given by
\begin{align}
\mathcal{L}^i_T\left((x_u)_{u \in \partial T}\right)
:=
\P^i\left[(X_u)_{u \in \partial T} = (x_u)_{u \in \partial T}\right]
=\sum_{\substack{(x'_u)_{u\in V}\,: \\ (x'_u)_{u \in \partial T} = (x_u)_{u \in \partial T}, \\ x'_\rho = i}}\prod_{e = (u,v)\in E} p_{x'_u,x'_v}(\ell_{e}),
\label{eq:def-lit}
\end{align}
for all $(x_u)_{u \in V} \in \mathcal{S}^{\partial T}$.

	
\paragraph{Root reconstruction}
In the {\bf root reconstruction problem} we seek a good estimator of the root state $X_\rho$ based on the leaf states $X_{\partial T}$. More formally, let $\{T^k = (V^k, E^k, \rho^k, \ell^k)\}_{k \geq 1}$ be a sequence of trees with $|\partial T^k| \to +\infty$ and let $\mathcal{X}^k = (X^k_\gamma)_{\gamma \in \Gamma_{T^k}}$ be a $\mathbf{P}_t$-chain on $T^k$ with root state distribution $\pi$. 
\begin{definition}[Consistent root reconstruction]
A sequence of root estimators
$$
F_k:\mathcal{S}^{\partial T^k} \to \mathcal{S}, 
$$
is said to be {\em consistent} for $\{T^k\}_k$, $(\mathbf{P}_t)_t$ and $\pi$ if
$$
\liminf_{k \to +\infty}
\P^\pi\left[
F_k\left(X^k_{\partial T^k}\right) = X^k_{\rho^k}
\right] = 1.
$$
\end{definition}
\noindent The basic question we address is the following.
\begin{question}
Under what conditions on  $\{T^k\}_k$, $(\mathbf{P}_t)_t$, and $\pi$ does there exist a sequence of consistent root  estimators?
\end{question}
\noindent Before stating our main theorems, we make some assumptions and introduce further notation.

\paragraph{Basic setup}
For concreteness, we let
$\{T^k\}_{k}$ be a nested sequence of trees with common root $\rho$. That is, for all $k > 1$, $T^{k-1}$ is a restriction of $T^{k}$, as defined next. 
\begin{definition}[Restriction]
\label{def:restriction}
Let $T = (V,E,\rho,\ell)$ be a tree.
For a subset of leaves $L \subset \partial T$,
the {\em restriction of $T$ to $L$} is the 
tree obtained from $T$ by keeping only those
points on a path between the root $\rho$ and 
a leaf $u \in L$.	
\end{definition}
\noindent Observe that a restriction of $T$ is always rooted at $\rho$. Without loss of generality, we assume that $|\partial T^k| = k$, so that $T^k$ is obtained by adding a leaf edge to $T^{k-1}$. (More general sequences can be obtained as subsequences.) In a slight abuse of notation, we denote by $\ell$ the edge-weight function for all $k$.
For $\gamma\in \Gamma_T$, we denote by $\ell_{\gamma}$ the length of the unique path from the root $\rho$ to $\gamma$. We refer to $\ell_\gamma$ as the distance from $\gamma$ to the root. 
Our standing assumptions throughout this paper are as follows.
\begin{enumerate}
	\item[(i)] {\it (Uniformly bounded height)} The sequence of trees $\{T^k\}_{k}$ has uniformly bounded height. Denote by
	$h^{k}:=\max\{\ell_x:\,x\in \partial T^k\}$
	the height of $T^k$. 
	Then the bounded height assumption says that 
	$$
	h^* := \sup_{k}h^{k} < +\infty.
	$$

	\item[(ii)] {\it (Initial-state identifiability)}  The Markov process $(\mathbf{P}_t)_t$ is initial-state identifiable, that is, all rows of the transition matrix $\mathbf{P}_t$ are distinct for all $t\in[0,\infty)$. In other words, given the distribution at time $t$, the initial state of the chain is uniquely determined.
	
\end{enumerate}

\noindent Whether the last assumption holds in general for countable-space, continuous-time Markov processes (that are stable and conservative) seems to be open. 
We show in the appendix that it holds for two broad classes of chains: reversible chains and uniform chains, including finite state spaces. 
(Observe, on the other hand, that in the discrete-time case it is easy to construct a transition matrix which does not satisfy initial-state identifiability.)
 We use the notation $a\land b:=\min\{a,b\}$ and $a\lor b:=\max\{a,b\}$.
For two probability measures $\mu_1$, $\mu_2$ on $\mathcal{S}$, let 
\begin{equation}
\label{eq:def-tv}
\|\mu_1 - \mu_2\|_{\tv}
= \frac{1}{2} \sum_{\sigma \in \mathcal{S}}
\left|
\mu_1(\sigma)
-
\mu_2(\sigma)
\right|
= \sup_{\mathcal{A} \subseteq \mathcal{S}} \left|
\mu_1(\mathcal{A}) 
-
\mu_2(\mathcal{A})
\right|
= 
1 - 
\sum_{\sigma \in \mathcal{S}}
\mu_1(\sigma)
\land
\mu_2(\sigma),
\end{equation}
be the total variation distance between
$\mu_1$ and $\mu_2$. 
(The last equality follows from noticing that
$\|\mu_1 - \mu_2\|_{\tv}
= \frac{1}{2} \sum_{\sigma \in \mathcal{S}}
[\mu_1(\sigma)\lor \mu_2(\sigma)
-
\mu_1(\sigma)\land \mu_2(\sigma)]
$
and
$1
= \frac{1}{2} \sum_{\sigma \in \mathcal{S}}
[\mu_1(\sigma)\lor \mu_2(\sigma)
+
\mu_1(\sigma)\land \mu_2(\sigma)]
$.)
Then initial-state identifiability 
is equivalent to
\begin{equation}
\label{eq:ident-tv}
\|\mathbf{p}^{i}(t) - \mathbf{p}^{j}(t)\|_\tv > 0,
\qquad \forall i\neq j \in \mathcal{S}, t \in (0,\infty), 
\end{equation}
where recall that $\mathbf{p}^{i}(t)$ was defined in~\eqref{eq:def-row}.


\paragraph{Big bang condition}
Our combinatorial condition for consistency
says roughly that the $T^k$s are arbitrarily dense around the root. 
\begin{definition}[Truncation]
For a tree $T = (V,E,\rho,\ell)$, let  
$$
T(s)=\{\gamma\in \Gamma_T:\;\ell_\gamma \leq s\},
$$ 
denote the tree obtained by truncating $T$ at distance $s$ from the root. We refer to $T(s)$ as a {\em truncation} of $T$.
\end{definition}
\noindent See the left-hand side of Figure \ref{Fig:Subtree} for an illustration.
Note that, if $s$ is greater than the height of $T$, then $T(s) = T$.
\begin{definition}[Big bang condition]\label{A:BB}
We say that a sequence of trees $\{T^k\}_k$ satisfies the {\em big bang condition} if:
for all $s\in(0,+\infty)$, we have $|\partial T^k(s)|\to +\infty$ as $k\to+\infty$.
\end{definition}
\noindent See Figure~\ref{Fig:BigB} for an illustration.
\tikzset{
	big dot/.style={
		circle, inner sep=0pt, 
		minimum size=1.2mm, fill=black
	}
}
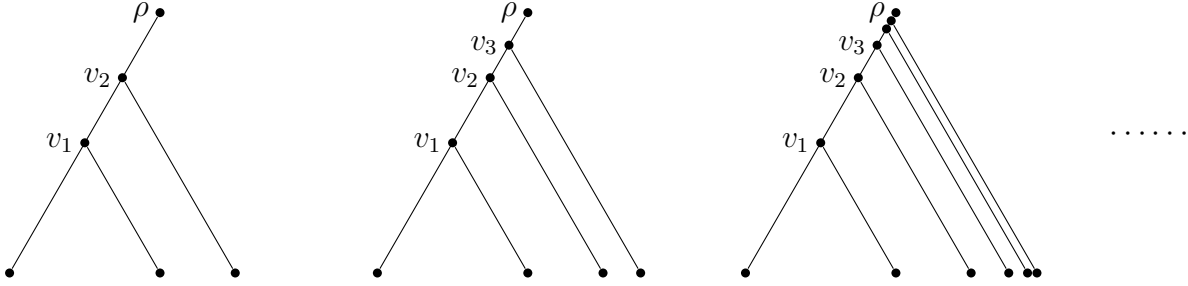
\begin{figure}
	\begin{minipage}{.29\textwidth}
		\begin{tikzpicture}
		\node[big dot] (origin) at (0,0) {};
		\node[big dot] (A) at (-1,-1.732) {};
		\node[big dot] (L1) at (-2,-3.464)   {};
		\node[big dot] (L2) at (0,-3.464)   {};
		\node[big dot] (B) at (-0.5,-0.866)   {};
		\node[big dot] (L3) at (1,-3.464)   {};
		
		\draw[-] (origin) -- (L1);
		\draw[-] (A) -- (L2);
		\draw[-] (B) -- (L3);
		
		\draw (origin) node[left]{$\rho$};
		\draw (A) node[left]{$v_1$};
		\draw (B) node[left]{$v_2$};
		\end{tikzpicture}
	\end{minipage}
	\begin{minipage}{.29\textwidth}
		\begin{tikzpicture}
		\node[big dot] (origin) at (0,0) {};
		\node[big dot] (A) at (-1,-1.732) {};
		\node[big dot] (L1) at (-2,-3.464)   {};
		\node[big dot] (L2) at (0,-3.464)   {};
		\node[big dot] (B) at (-0.5,-0.866)   {};
		\node[big dot] (L3) at (1,-3.464)   {};
		\node[big dot] (C) at (-0.25,-0.433)   {};
		\node[big dot] (L4) at (1.5,-3.464)   {};
		
		\draw[-] (origin) -- (L1);
		\draw[-] (A) -- (L2);
		\draw[-] (B) -- (L3);
		\draw[-] (C) -- (L4);
		
		\draw (origin) node[left]{$\rho$};
		\draw (A) node[left]{$v_1$};
		\draw (B) node[left]{$v_2$};
		\draw (C) node[left]{$v_3$};
		\end{tikzpicture}
	\end{minipage}
	\begin{minipage}{.29\textwidth}
		\begin{tikzpicture}
		\node[big dot] (origin) at (0,0) {};
		\node[big dot] (A) at (-1,-1.732) {};
		\node[big dot] (L1) at (-2,-3.464)   {};
		\node[big dot] (L2) at (0,-3.464)   {};
		\node[big dot] (B) at (-0.5,-0.866)   {};
		\node[big dot] (L3) at (1,-3.464)   {};
		\node[big dot] (C) at (-0.25,-0.433)   {};
		\node[big dot] (L4) at (1.5,-3.464)   {};
		\node[big dot] (D) at (-0.125,-0.2165)   {};
		\node[big dot] (L5) at (1.75,-3.464)   {};
		\node[big dot] (E) at (-0.0625,-0.10825)   {};
		\node[big dot] (L6) at (1.875,-3.464)   {};
		
		\draw[-] (origin) -- (L1);
		\draw[-] (A) -- (L2);
		\draw[-] (B) -- (L3);
		\draw[-] (C) -- (L4);
		\draw[-] (D) -- (L5);
		\draw[-] (E) -- (L6);
		
		\draw (origin) node[left]{$\rho$};
		\draw (A) node[left]{$v_1$};
		\draw (B) node[left]{$v_2$};
		\draw (C) node[left]{$v_3$};
		\end{tikzpicture}
	\end{minipage}
	\begin{minipage}{.10\textwidth}
		$\cdots\cdots$
	\end{minipage}
	\caption{A sequence of trees $\{T^k\}_k$ (from left to right) satisfying the big bang condition. The distance from $v_k$ to the root is $2^{-k}$.}\label{Fig:BigB}
\end{figure}
For $i \in \mathcal{S}$, let $\mathcal{D}_i$ be the set of states reachable from $i$, that is, the states $j$ for which $p_{ij}(t) > 0$ for some $t > 0$ (and, therefore, for all $t > 0$; see e.g.~\cite[Chapter 2]{Liggett:10}).

\subsection{Statements of main results}
\label{sec:results}

Our main result is the following.
\begin{theorem}[Consistent root reconstruction: necessary and sufficient conditions]
	\label{thm:1}
Let $\{T^k\}_k$ and $(\mathbf{P}_t)_t$ satisfy our standing assumptions (i) and (ii), and let $\pi$ be a probability distribution on $\mathcal{S}$. Then there exists a sequence of
root estimators that is consistent for $\{T^k\}_k$, $(\mathbf{P}_t)_t$ and $\pi$ if and only if at least one of the following conditions hold:
\begin{enumerate}
	\item[(a)] {\em (Downstream disjointness)} For all $i \neq j$ such that $\pi(i) \land \pi(j) > 0$, the reachable sets $\mathcal{D}_i$ and $\mathcal{D}_j$ are disjoint.
	
	\item[(b)] {\em (Big bang)} The sequence of trees $\{T^k\}_k$ satisfies the big bang condition.
	
\end{enumerate}
\end{theorem}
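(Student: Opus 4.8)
The proof splits into the two implications, and within the sufficiency direction the two cases are handled separately. Case (a) is essentially immediate: under $\P^i$ each leaf state lies in $\mathcal{D}_i$ almost surely, and since the sets $\mathcal{D}_i$, $i\in\mathrm{supp}(\pi)$, are pairwise disjoint, reporting the unique $i\in\mathrm{supp}(\pi)$ with $X_u\in\mathcal{D}_i$ for one fixed leaf $u$ already gives an estimator that is correct with probability one. So the real content is the sufficiency of (b) and the necessity direction.

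For the sufficiency of (b) I would first reduce to finitely many possible root states: since $\pi$ is a probability measure, for each $\varepsilon$ there is a finite $\mathcal{S}_0$ with $\pi(\mathcal{S}_0)>1-\varepsilon$, it suffices to produce estimators $F_k^{\mathcal{S}_0}$ with $\P^i[F_k^{\mathcal{S}_0}=i]\to 1$ for every $i\in\mathcal{S}_0$, and a diagonal argument over $\varepsilon\downarrow 0$ assembles a single consistent sequence. Next, using the big bang condition and a further diagonalization, I would pick depths $s_k\downarrow 0$ and a set $L_k$ of $m_k\to\infty$ leaves of $T^k$ --- one taken below each of $m_k$ distinct leaves of the truncation $T^k(s_k)$ --- so that the root-paths of any two leaves in $L_k$ diverge at depth $\le s_k$ and, importantly, $m_k s_k\to 0$. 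Writing $\gamma_u$ for the depth-$\le s_k$ point above $u\in L_k$, the $\{\gamma_u\}$ form an antichain, and conditioning on their states makes $(X_u)_{u\in L_k}$ independent with $X_u\sim\mathbf{p}^{X_{\gamma_u}}(\ell_u-\ell_{\gamma_u})$; since $\P^i[X_{\gamma_u}\neq i]=1-p_{ii}(\ell_{\gamma_u})\le q_i\ell_{\gamma_u}\le q_i s_k$, a union bound over the mixture representation gives $\|\mathcal{L}^i_{T^k}|_{L_k}-\bigotimes_{u\in L_k}\mathbf{p}^i(\ell_u-\ell_{\gamma_u})\|_{\tv}\le m_k q_i s_k\to 0$. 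Hence it suffices to build estimators that are consistent for data drawn from a product measure $\bigotimes_{u\in L_k}\mathbf{p}^i(\tau^k_u)$ with $\tau^k_u\in[0,h^*]$.

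For that product-measure problem I would stratify the leaves by depth: partition $[0,h^*]$ into a fixed finite number of sub-intervals of width $\varepsilon$, keep only the leaves whose $\tau^k_u$ falls in the most populated sub-interval (still $\to\infty$ many), so that the retained samples come from laws all within total variation $\omega(\varepsilon)$ of a single $\mathbf{p}^i(c)$, $c$ the sub-interval center; then report the $a\in\mathcal{S}_0$ minimizing $\|\widehat{\mu}-\mathbf{p}^a(c)\|_{\tv}$, where $\widehat{\mu}$ is the empirical distribution of the retained samples. Three ingredients yield consistency: $t\mapsto\mathbf{p}^i(t)$ is continuous in total variation, hence uniformly continuous on $[0,h^*]$, so $\{t\mapsto\mathbf{p}^a(t):a\in\mathcal{S}_0\}$ share a common modulus $\omega$; by initial-state identifiability and compactness, $\eta:=\min_{a\neq a'\in\mathcal{S}_0}\inf_{t\in[0,h^*]}\|\mathbf{p}^a(t)-\mathbf{p}^{a'}(t)\|_{\tv}>0$; and $\widehat{\mu}$ concentrates in total variation around the average of the sample laws (coordinatewise concentration plus uniform tightness of $\{\mathbf{p}^i(t):t\in[0,h^*]\}$). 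Choosing $\varepsilon$ with $\omega(\varepsilon)<\eta/10$ makes the nearest-law rule output $i$ with probability tending to $1$. This stratification --- whose purpose is to keep the signed measures $\mathbf{p}^i(\tau)-\mathbf{p}^a(\tau)$ from cancelling when averaged over varying depths --- is the main obstacle; the bounded-height assumption (i) enters precisely through the compactness used here, and the several diagonalizations and analytic facts about $(\mathbf{P}_t)_t$ are routine but require care.

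For necessity, suppose neither (a) nor (b) holds. From the failure of (a), fix $i\neq j$ with $\pi(i)\wedge\pi(j)>0$ and some $\sigma_0\in\mathcal{D}_i\cap\mathcal{D}_j$. From the failure of (b), fix $s^*>0$ and $M$ with $|\partial T^k(s^*)|\le M$ for all $k$; since the $T^k$ are nested, $T^k(s^*)$ is nondecreasing with a bounded number of leaves, so (a finite-height tree with finitely many leaves being a finite tree) it stabilizes to a fixed finite tree $\mathcal{T}^*$ for all large $k$, and $T^k$ is then $\mathcal{T}^*$ with pendant subtrees attached below those leaves of $\mathcal{T}^*$ sitting at depth exactly $s^*$. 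I would then couple $X_{\partial T^k}\sim\mathcal{L}^i_{T^k}$ and $X'_{\partial T^k}\sim\mathcal{L}^j_{T^k}$ so that, with probability at least $q^*:=\P^i[X_\gamma=\sigma_0\ \forall\gamma\in\partial\mathcal{T}^*]\wedge\P^j[X_\gamma=\sigma_0\ \forall\gamma\in\partial\mathcal{T}^*]>0$ --- positive because $\sigma_0$ is reachable and $\mathcal{T}^*$ is a fixed finite tree --- both processes equal $\sigma_0$ at every leaf of $\mathcal{T}^*$; on that event the pendant subtrees are driven with identical randomness, forcing $X_{\partial T^k}=X'_{\partial T^k}$. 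Hence $\|\mathcal{L}^i_{T^k}-\mathcal{L}^j_{T^k}\|_{\tv}\le 1-q^*$ for all large $k$, so any estimator $F_k$ satisfies $\P^i[F_k\neq i]+\P^j[F_k\neq j]\ge 1-\|\mathcal{L}^i_{T^k}-\mathcal{L}^j_{T^k}\|_{\tv}\ge q^*$, whence $\P^\pi[F_k(X^k_{\partial T^k})\neq X^k_{\rho^k}]\ge(\pi(i)\wedge\pi(j))\,q^*>0$ and no consistent sequence exists. The only slightly delicate bookkeeping point is the stabilization of $T^k(s^*)$.
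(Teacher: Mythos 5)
Your proposal is correct in both directions, but the technical route differs from the paper's in ways worth comparing. The paper first proves an information-theoretic characterization (Lemmas~\ref{lemma:info-facts} and~\ref{lem:charac}): consistency holds iff $\liminf_k\|\mathcal{L}^{i}_{T^k}-\mathcal{L}^{j}_{T^k}\|_\tv=1$ for every pair with positive prior, and then both directions are run through this criterion. For necessity the paper, like you, stabilizes the truncation $T^k(s_0)$ and shows the truncation laws under $i$ and $j$ have overlapping support, then transfers this to the leaves by a data-processing inequality; your coupling that forces every truncation leaf into a common reachable state $\sigma_0$ and then reuses the same randomness below, combined with the two-point testing bound $\P^i[F_k\neq i]+\P^j[F_k\neq j]\ge 1-\|\mathcal{L}^i_{T^k}-\mathcal{L}^j_{T^k}\|_\tv$, is essentially an explicit version of the same argument and is fine (your terse claim $q^*>0$ is justified exactly as in the paper, since $\sigma_0\in\mathcal{D}_i$ gives $p_{i\sigma_0}(t)>0$ for \emph{all} $t>0$). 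The real divergence is in sufficiency under the big bang condition. The paper extracts a well-spread restriction (Lemma~\ref{lemma:bigbang-spread}), \emph{stretches} every retained leaf to the common depth $h^*$ by simulating the chain on edge extensions---so that initial-state identifiability is needed only at the single time $h^*$---and controls the stretched state frequencies via a covariance bound driven by the spread (Lemma~\ref{L:var_nj}) and Chebyshev in a weighted $\ell_1$-type norm. You instead decouple the chosen leaves by a union bound on the event that all depth-$\le s_k$ ancestors retain the root state, which requires the extra (but achievable) constraint $m_k s_k\to 0$ and discards many usable leaves, and you handle the varying leaf depths not by stretching but by depth-stratification together with the compactness fact $\eta=\min_{a\neq a'}\inf_{t\in[0,h^*]}\|\mathbf{p}^a(t)-\mathbf{p}^{a'}(t)\|_\tv>0$ (a correct consequence of identifiability plus TV-continuity of $t\mapsto\mathbf{p}^a(t)$, and precisely the cancellation issue the paper's stretching is designed to sidestep), finishing with empirical-measure concentration and a nearest-law rule. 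What each buys: your estimator needs no auxiliary randomization and avoids the spread/variance machinery, but the union-bound decoupling is lossy and your argument gives only qualitative consistency; the paper's frequency-vector approach is what later upgrades to the quantitative, eventually exponential, error bounds of Theorem~\ref{thm:2}. The steps you label routine (the diagonalizations, TV-continuity, tightness of $\{\mathbf{p}^i(t):t\in[0,h^*]\}$, stabilization of the truncation) are indeed routine and correct as sketched.
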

\noindent 
An application to DNA evolution by nucleotide insertion and deletion is detailed in Section~\ref{sec:tkf}.
We also derive error bounds under the
big bang condition.
For $\epsilon > 0$, let $n_\epsilon < \infty$ be the smallest integer such that $\sum_{i> n_\epsilon}\pi(i) <\epsilon$ and
$
\Lambda_\epsilon = \{i \in \mathcal{S}\,:\, i \leq n_\epsilon\}.
$
Define also
$$q^*_\epsilon = \max_{i \in \Lambda_\epsilon}\, (q_i \lor 1),$$
and
$$
\Delta_{\epsilon}
=
\min_{i_1 \neq i_2 \in \Lambda_{\epsilon}}
\|
\mathbf{p}^{i_1}(h^*) 
- 
\mathbf{p}^{i_2}(h^*)
\|_\tv,
$$
which is positive under initial-state identifiability.
\begin{theorem}[Root reconstruction: error bounds]\label{thm:2}
Let $\{T^k\}_k$ and $(\mathbf{P}_t)_t$ satisfy our standing assumptions (i) and (ii) as well as the big bang condition,
and let $\pi$ be a probability distribution on $\mathcal{S}$. Fix $\epsilon > 0$ and $k \geq 1$.
Then there exist universal constants $C_0, C_1 > 0$ and an estimator $F_k$
such that
for all $s > 0$,
\begin{equation}\label{eq:thm:2:gen}
\P^{\pi}
\left[
F_k(X^k_{\partial T^{k}})
\neq 
X^k_{\rho}
\right]
< \epsilon
+
C_0\,  \Delta_\epsilon^{-2}\, q^*_\epsilon\, s
+
n_\epsilon \exp
\left(
-  C_1\, \Delta_\epsilon^2\, |\partial T^k(s)|
\right).
\end{equation}
Further, if the chain is uniform, that is, if
$
q^* 
= 
\sup_{i \in \mathcal{S}} \,(q_i \lor 1) < +\infty$,
then there exist universal constants $C^U_0, C^U_1, C^U_2 > 0$ and an estimator $F^U_k$
such that for all $s > 0$ and all $i$
\begin{equation}\label{eq:thm:2:unif}
\P^{i}
\left[
F^U_k(X^k_{\partial T^{k}})
\neq 
X^k_{\rho}
\right]
<
C^U_0\,  f_*^{-4}\, q^*\, s
+
C^U_2 f_*^{-1} \exp
\left(
-  C^U_1\, f_*^4\, |\partial T^k(s)|
\right),
\end{equation}
where $f_* = e^{-q^* h^*}$.
\end{theorem}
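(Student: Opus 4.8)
\emph{Plan.} I would prove both bounds by exhibiting an \emph{explicit} estimator built from a single well-chosen leaf in each of a family of vertex-disjoint subtrees, and then running a Scheffé-type tournament on a small candidate set. Fix the truncation level $s>0$. The leaves of $T^k(s)$ are the points $w_1,\dots,w_m$ ($m=|\partial T^k(s)|$, each with $\ell_{w_\ell}\le s$) at which the paths of $T^k$ leave the ball of radius $s$ about $\rho$, together with any original leaves of $T^k$ already at distance $\le s$. The subtrees $T^k_1,\dots,T^k_m$ of $T^k$ hanging below $w_1,\dots,w_m$ are vertex-disjoint and their leaf sets partition $\partial T^k$. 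From each $T^k_\ell$ pick one leaf $u_\ell\in\partial T^k$, at distance $d_\ell:=\ell_{u_\ell}-\ell_{w_\ell}\in[0,h^*]$ below $w_\ell$. The estimator will depend only on $(X^k_{u_1},\dots,X^k_{u_m})$ and the (known) tree and kernel: for each ordered pair $i_1\neq i_2$ in $\Lambda_\epsilon$ and each $\ell$, let $A^{(\ell)}_{i_1 i_2}=\{x:\,p_{i_1 x}(d_\ell)>p_{i_2 x}(d_\ell)\}$, so that $\mathbf p^{i_1}(d_\ell)(A^{(\ell)}_{i_1 i_2})-\mathbf p^{i_2}(d_\ell)(A^{(\ell)}_{i_1 i_2})=\|\mathbf p^{i_1}(d_\ell)-\mathbf p^{i_2}(d_\ell)\|_\tv\ge\|\mathbf p^{i_1}(h^*)-\mathbf p^{i_2}(h^*)\|_\tv\ge\Delta_\epsilon$, the first inequality by the contraction (data-processing) property of the kernel $\mathbf P_{h^*-d_\ell}$ and the Chapman--Kolmogorov identity $\mathbf p^{i}(h^*)=\mathbf p^{i}(d_\ell)\mathbf P_{h^*-d_\ell}$. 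Say $i_1$ \emph{beats} $i_2$ if $\sum_\ell\mathbbm{1}[X^k_{u_\ell}\in A^{(\ell)}_{i_1 i_2}]\ge\sum_\ell\tfrac12\big(\mathbf p^{i_1}(d_\ell)(A^{(\ell)}_{i_1 i_2})+\mathbf p^{i_2}(d_\ell)(A^{(\ell)}_{i_1 i_2})\big)$, and let $F_k$ output the $i\in\Lambda_\epsilon$ that beats every other element of $\Lambda_\epsilon$ (and, say, $1$ if there is none or it is not unique).

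\emph{Analysis of \eqref{eq:thm:2:gen}.} If $X^k_\rho\notin\Lambda_\epsilon$ we give up, at cost $\sum_{i\notin\Lambda_\epsilon}\pi(i)<\epsilon$ (the first term). Condition on $X^k_\rho=i_0\in\Lambda_\epsilon$ and on $\mathcal F:=\sigma(X^k_{w_1},\dots,X^k_{w_m})$. Call $\ell$ \emph{clean} if $X^k_{w_\ell}=i_0$ and \emph{dirty} otherwise; this is $\mathcal F$-measurable, and given $\mathcal F$ the $X^k_{u_\ell}$ are independent with $X^k_{u_\ell}\sim\mathbf p^{X^k_{w_\ell}}(d_\ell)$, so the clean $X^k_{u_\ell}$ are independent with laws $\mathbf p^{i_0}(d_\ell)$ while the dirty ones are arbitrary. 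Since $1-p_{i_0 i_0}(\ell_{w_\ell})\le q_{i_0}\ell_{w_\ell}\le q^*_\epsilon s$, the expected number of dirty indices is at most $q^*_\epsilon s\,m$, so Markov's inequality gives $\P^{i_0}[\#\{\text{dirty}\}\ge\tfrac14\Delta_\epsilon m]\le 4q^*_\epsilon s/\Delta_\epsilon\le C_0\Delta_\epsilon^{-2}q^*_\epsilon s$ (as $\Delta_\epsilon\le 1$) --- the middle term. On the complementary good event for $\mathcal F$, a short computation (drop the $\le\#\{\text{dirty}\}$ dirty contributions, and use the $\Delta_\epsilon$ gap) shows that for each $i_2\neq i_0$ in $\Lambda_\epsilon$, if the two clean empirical sums $\sum_{\ell\ \mathrm{clean}}\mathbbm{1}[X^k_{u_\ell}\in A^{(\ell)}_{i_0 i_2}]$ and $\sum_{\ell\ \mathrm{clean}}\mathbbm{1}[X^k_{u_\ell}\in A^{(\ell)}_{i_2 i_0}]$ lie within $\tfrac14\Delta_\epsilon m$ of their conditional means, then $i_0$ beats $i_2$ and $i_2$ does not beat $i_0$; hence $F_k$ returns $i_0$ unless one of at most $2n_\epsilon$ such deviations occurs. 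Each is a deviation of a sum of at most $m$ independent $[0,1]$-variables, of probability $\le 2\exp(-\tfrac18\Delta_\epsilon^2 m)$ by Hoeffding. Collecting the three contributions and averaging over $X^k_\rho\sim\pi$ gives $\P^{\pi}[F_k(X^k_{\partial T^k})\neq X^k_\rho]<\epsilon+C_0\Delta_\epsilon^{-2}q^*_\epsilon s+4n_\epsilon\exp(-\tfrac18\Delta_\epsilon^2|\partial T^k(s)|)$, which is of the claimed form after adjusting the universal constants (the bound being vacuous when it exceeds $1$).

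\emph{The uniform bound \eqref{eq:thm:2:unif}.} Now no prior is used and the estimate must hold for every fixed $i$. Two modifications suffice. First, since the chain is uniform, $p_{ii}(t)\ge e^{-q^* t}\ge f_*$ for all $t\le h^*$; hence on the (again high-probability) event that most level-$s$ indices are clean, the true root state appears among $X^k_{u_1},\dots,X^k_{u_m}$ at least about $f_* m$ times. We may therefore replace $\Lambda_\epsilon$ by the data-dependent candidate set $C:=\{j:\#\{\ell:X^k_{u_\ell}=j\}\ge\tfrac12 f_* m\}$, which has $|C|\le 2/f_*$ deterministically and contains $X^k_\rho$ with high probability, and run the tournament on $C$. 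Second, in place of $\Delta_\epsilon$ we use the quantitative form of initial-state identifiability for uniform chains (of the type proved in the appendix), namely $\|\mathbf p^{i}(h^*)-\mathbf p^{j}(h^*)\|_\tv\ge c\,f_*^2$ for $i\neq j$ and a universal $c>0$ (equality up to constants already occurs for the symmetric two-state chain). With these substitutions the same computation --- Markov's inequality for the dirty count, Hoeffding for the tournament, a union bound over the $\le(2/f_*)^2$ pairs in $C$ --- produces a bound of the form $C^U_0 f_*^{-4}q^* s+C^U_2 f_*^{-1}\exp(-C^U_1 f_*^4|\partial T^k(s)|)$, as in \eqref{eq:thm:2:unif}.

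\emph{Where the difficulty lies.} The delicate point is that the level-$s$ states $X^k_{w_\ell}$ --- which carry essentially all the information one can cheaply extract, precisely because the chain has barely moved by the small time $s$ --- are themselves correlated through the part of $T^k$ above level $s$, so the $u_\ell$-observations are \emph{not} i.i.d.\ draws from $\mathbf p^{X^k_\rho}(\cdot)$. The resolution is to control the $X^k_{w_\ell}$ only in expectation (Markov's inequality on the number of ``dirty'' indices --- this is exactly the step that produces the tunable $q^*_\epsilon s$ error term and hence couples the two remaining error sources through the free parameter $s$) while exploiting the genuine conditional independence of the disjoint subtrees given $(X^k_{w_\ell})_\ell$, and letting the tournament tolerate a small, possibly adversarial, fraction of dirty samples --- which is why one needs the dirty fraction to be below a multiple of $\Delta_\epsilon$ (resp.\ $f_*^2$), not merely below $\tfrac12$. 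For \eqref{eq:thm:2:unif} the additional obstacle, absent when a prior already singles out the finite set $\Lambda_\epsilon$, is to extract a finite candidate set from the data alone and then union-bound over it uniformly in the unknown root state; this is where the bound $p_{ii}(t)\ge e^{-q^* t}$ for uniform chains, and the resulting $|C|=O(f_*^{-1})$, enter.
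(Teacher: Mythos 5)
Your proof of the general bound \eqref{eq:thm:2:gen} is essentially sound and follows the same architecture as the paper's (truncate at level $s$, use one representative leaf per subtree hanging below $\partial T^k(s)$, exploit conditional independence given the level-$s$ states, pay an $O(q^*_\epsilon s)$ price for level-$s$ states differing from the root, then Hoeffding plus a union bound over $\Lambda_\epsilon$), with some genuinely different implementation choices that are valid: you avoid the paper's stretching of all leaves to a common height $h^*$ (and hence its randomized estimator) by using, for each leaf, separating sets at the actual time $d_\ell$ together with the data-processing inequality $\|\mathbf{p}^{i_1}(d_\ell)-\mathbf{p}^{i_2}(d_\ell)\|_\tv \ge \|\mathbf{p}^{i_1}(h^*)-\mathbf{p}^{i_2}(h^*)\|_\tv$; you run a midpoint tournament rather than the paper's one-sided threshold tests; and you use Markov's inequality on the number of dirty indices rather than Chebyshev on $S_i$. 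What remains there is only constant bookkeeping: your union bound yields a factor $2n_\epsilon$ (or $4n_\epsilon$) in front of the exponential while the theorem has $n_\epsilon$, and your $\tfrac14\Delta_\epsilon m$ allowances meet the midpoint gap with equality; both are fixable by shrinking $C_1$, slightly retuning the fractions, and treating the regime where $\Delta_\epsilon^2\,|\partial T^k(s)|$ is small as vacuous, but this should be spelled out.

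The uniform bound \eqref{eq:thm:2:unif} has a genuine gap at the step ``union-bound over the $\le(2/f_*)^2$ pairs in $C$.'' The candidate set $C$ is data-dependent, so the deterministic cardinality bound $|C|\le 2/f_*$ does not license a union bound over $(2/f_*)^2$ fixed failure events: which pairs lie in $C$ is random, and a priori any of the infinitely many states $j\in\mathcal{S}$ could enter $C$. To control $\P^i\bigl[\exists\, j\in C\setminus\{i\}:\ i \text{ fails against } j \text{ or } j \text{ beats } i\bigr]$ you must either replace $C$, with high probability, by a deterministic finite superset, or bound the probability that some low-likelihood state enters $C$; the naive route—union-bounding over all $j$ with $p_{ij}(h^*)$ small—fails because there are infinitely many such $j$, each contributing a term of order $\exp(-c f_*^4\, m)$, and the sum diverges. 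This is precisely the point the paper handles in Lemma~\ref{lem:hatlambda}: it partitions $\mathcal{J}_{i,f_*/3}=\{j:\,p_{ij}(h^*)\le f_*/3\}$ into at most $6/f_*+1$ groups each of total probability at most $f_*/3$ and applies a single Hoeffding bound per group, concluding that with high probability no state of $\mathcal{J}_{i,f_*/3}$ reaches the count threshold $\tfrac12 f_* m$; hence $\widehat{\Lambda}\subseteq\mathcal{J}^c_{i,f_*/3}$, a deterministic set of size at most $3/f_*$, over which the tournament union bound is legitimate. Your proposal needs this grouping device (or an equivalent) before its final union bound; with it, the remaining ingredients of your uniform argument—the bound $p_{ii}(t)\ge f_*$ for $t\le h^*$, the bound $|C|=O(f_*^{-1})$, and the separation $\Delta_{Q,h^*}\ge f_*^2$ from the appendix—do combine as you describe, and the $f_*^{-2}$ prefactor you would obtain can be absorbed into $C^U_2 f_*^{-1}$ by the usual vacuity argument.
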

\noindent 
The following example gives some intuition for the terms in~\eqref{eq:thm:2:gen} and~\eqref{eq:thm:2:unif}.
\begin{example}[Two-state chain on a pinched star]
Consider the following tree $T$. The root $\rho$ is 
adjacent to a single vertex $\widetilde{\rho}$ through
an edge of length $s > 0$. The vertex $\widetilde{\rho}$
is also adjacent to $m$ vertices $x_1,\ldots, x_m$
through edges of length $h-s > 0$, where $m$ is an odd integer. Consider the $(\mathbf{P}_t)_t$-chain on $T$ with 
state space $\mathcal{S} = \{1,2\}$,
$Q$-matrix
$$
Q
=
\begin{pmatrix}
-q & q\\
q & -q
\end{pmatrix},
$$
and uniform root distribution $\pi$.
It can be shown (see e.g.~\cite{SempleSteel:03}) that under this chain
\begin{equation}\label{eq:transition-probabilities}
p_{11}(t) = \frac{1 + e^{-2qt}}{2}
\quad\text{and}\quad 
p_{12}(t) = \frac{1 - e^{-2qt}}{2}.
\end{equation}
Let $N_1$ be the number of leaves in state $1$,
let $\alpha = p_{11}(s) \in (1/2,1)$ and let $\beta = p_{12}(h-s) \in (0,1/2)$.
The estimator that maximizes the probability of correct
reconstruction is the maximum a posteriori estimate (see Lemma~\ref{lemma:info-facts}), which in this case boils down to setting $F(N_1) = 1$ if
\begin{align*}
&\frac{1}{2}\alpha \binom{m}{N_1} (1-\beta)^{N_1}\beta^{m-N_1}
+\frac{1}{2}(1-\alpha)\binom{m}{N_1}\beta^{N_1}(1-\beta)^{m-N_1}\\
&\qquad > \frac{1}{2}\alpha \binom{m}{N_1} \beta^{N_1}(1-\beta)^{m-N_1}
+\frac{1}{2}(1-\alpha)\binom{m}{N_1}(1-\beta)^{N_1}\beta^{m-N_1},
\end{align*}
and $F(N_1) = 2$ otherwise.
Observing that
$$
\alpha x + (1-\alpha) y >
\alpha y + (1-\alpha) x 
\ \iff\ 
(2 \alpha - 1)(x-y) > 0
\ \iff\ 
x > y,
$$
where we used that $\alpha > 1/2$, we get that
$F(N_1) = 1$ if and only if $N_1 > m/2$. Hence by symmetry,
for $i=1,2$,
\begin{align*}
\P^\pi[F(N_1) \neq X_\rho]
&= \P^i[F(N_1) \neq i]\\
&= \sum_{n < m/2}
\left\{
\alpha \binom{m}{n} (1-\beta)^{n}\beta^{m-n}
+(1-\alpha)\binom{m}{n}\beta^{n}(1-\beta)^{m-n}
\right\}\\
&\leq 
(1-\alpha)
+ \alpha\,
\P[N_1 < m/2\,|\,X_{\widetilde{\rho}} = 1]\\
&\leq
(1-\alpha)
+ \alpha\,
\exp\left(
-2 m \left\{\frac{1}{2}-\beta\right\}^2
\right),
\end{align*}
by Hoeffding's inequality~\cite{Hoeffding:63}.
By~\eqref{eq:transition-probabilities}, as $s \to 0$,
$$
\P^i[F(N_1) \neq i]
\leq 2 q s\, (1+o(1))
+ \exp\left(
-\frac{1}{2} m f^4 \, (1+o(1))
\right),
$$
where $f = e^{-q h}$.
\end{example}

\section{Spread}
\label{sec:spread}

We begin the proof by relating the big bang condition to a notion of spread introduced in~\cite{GascuelSteel:10}. This connection captures the basic combinatorial insights behind the proof of Theorem~\ref{thm:1}.

Let $T = (V,E,\rho,\ell)$ be a tree.
We let $\ell_{xy}$ be the length of the shared path from the root $\rho$ to the leaves $x$ and $y$. That is,
if $\path(u,v)$ denotes the set of edges on the unique path
between vertices $u$ and $v$, then we have
$$
\ell_{xy}:=\sum_{e\in \path(\rho,x)\cap\path(\rho,y)}\ell_{e}.
$$
Roughly speaking, a tree is ``well-spread'' if the average value of $\ell_{xy}$ over all pairs $(x,y)$ is small. The formal definition is as follows. 
\begin{definition}[Spread]\label{Def:Spread}
	The {\em spread} of a tree $T$ is defined as
	$$\spread(T):=\frac{\sum_{x,y} (\ell_{xy} \land 1)}{|\partial T|(|\partial T|-1)},$$
	where the summation is over all ordered pairs of distinct leaves $x\neq y$. For $\beta\in(0,\infty)$, we say that  $T$ is {\em $(1-\beta)$-spread} if $\spread(T)\leq \beta$. For a sequence of trees $\{T^k\}_k$, we say that $\{T^k\}_k$ has {\em vanishing spread} if
	$$
	\limsup_{k\to\infty} \spread(T^k) = 0.
	$$
\end{definition}
\noindent We show below that, if $\{T^k\}_k$ has vanishing spread, then the big bang condition holds.
The converse is false as illustrated in Figure~\ref{Fig:BigB2}, where the root is arbitrarily dense but the spread is dominated by a subtree away from the root.
\begin{figure}
	\begin{minipage}{.29\textwidth}
		\begin{tikzpicture}
		\node[big dot] (origin) at (0,0) {};
		\node[big dot] (A) at (-1,-1.732) {};
		\node[big dot] (L1) at (-2,-3.464)   {};
		\node[big dot] (L2) at (0,-3.464)   {};
		\node[big dot] (B) at (-0.5,-0.866)   {};
		\node[big dot] (L3) at (1,-3.464)   {};
		\node[big dot] (K1) at (-0.4,-3.464)   {};
		\node[big dot] (K2) at (-0.8,-3.464)   {};		
		\node[big dot] (K3) at (-1.2,-3.464)   {};
		\node[big dot] (K4) at (-1.6,-3.464)   {};	
		
		\draw[-] (origin) -- (L1);
		\draw[-] (A) -- (L2);
		\draw[-] (B) -- (L3);
		\draw[-] (A) -- (K1);
		\draw[-] (A) -- (K2);
		\draw[-] (A) -- (K3);
		\draw[-] (A) -- (K4);
		
		\draw (origin) node[left]{$\rho$};
		\draw (A) node[left]{$v_1$};
		\draw (B) node[left]{$v_2$};
		\end{tikzpicture}
	\end{minipage}
	\begin{minipage}{.29\textwidth}
		\begin{tikzpicture}
		\node[big dot] (origin) at (0,0) {};
		\node[big dot] (A) at (-1,-1.732) {};
		\node[big dot] (L1) at (-2,-3.464)   {};
		\node[big dot] (L2) at (0,-3.464)   {};
		\node[big dot] (B) at (-0.5,-0.866)   {};
		\node[big dot] (L3) at (1,-3.464)   {};
		\node[big dot] (C) at (-0.25,-0.433)   {};
		\node[big dot] (L4) at (1.5,-3.464)   {};
		\node[big dot] (K1) at (-0.22222,-3.464)   {};
		\node[big dot] (K2) at (-0.44444,-3.464)   {};		
		\node[big dot] (K3) at (-0.66666,-3.464)   {};
		\node[big dot] (K4) at (-0.88888,-3.464)   {};
		\node[big dot] (K5) at (-1.11111,-3.464)   {};
		\node[big dot] (K6) at (-1.33333,-3.464)   {};		
		\node[big dot] (K7) at (-1.55555,-3.464)   {};
		\node[big dot] (K8) at (-1.77777,-3.464)   {};
		
		\draw[-] (origin) -- (L1);
		\draw[-] (A) -- (L2);
		\draw[-] (B) -- (L3);
		\draw[-] (C) -- (L4);
		\draw[-] (A) -- (K1);
		\draw[-] (A) -- (K2);
		\draw[-] (A) -- (K3);
		\draw[-] (A) -- (K4);
		\draw[-] (A) -- (K5);
		\draw[-] (A) -- (K6);
		\draw[-] (A) -- (K7);
		\draw[-] (A) -- (K8);
		
		\draw (origin) node[left]{$\rho$};
		\draw (A) node[left]{$v_1$};
		\draw (B) node[left]{$v_2$};
		\draw (C) node[left]{$v_3$};
		\end{tikzpicture}
	\end{minipage}
	\begin{minipage}{.29\textwidth}
		\begin{tikzpicture}
		\node[big dot] (origin) at (0,0) {};
		\node[big dot] (A) at (-1,-1.732) {};
		\node[big dot] (L1) at (-2,-3.464)   {};
		\node[big dot] (L2) at (0,-3.464)   {};
		\node[big dot] (B) at (-0.5,-0.866)   {};
		\node[big dot] (L3) at (1,-3.464)   {};
		\node[big dot] (C) at (-0.25,-0.433)   {};
		\node[big dot] (L4) at (1.5,-3.464)   {};
		\node[big dot] (D) at (-0.125,-0.2165)   {};
		\node[big dot] (L5) at (1.75,-3.464)   {};
		\node[big dot] (E) at (-0.0625,-0.10825)   {};
		\node[big dot] (L6) at (1.875,-3.464)   {};
		\node[big dot] (K1) at (-0.22222,-3.464)   {};
		\node[big dot] (K2) at (-0.44444,-3.464)   {};		
		\node[big dot] (K3) at (-0.66666,-3.464)   {};
		\node[big dot] (K4) at (-0.88888,-3.464)   {};
		\node[big dot] (K5) at (-1.11111,-3.464)   {};
		\node[big dot] (K6) at (-1.33333,-3.464)   {};		
		\node[big dot] (K7) at (-1.55555,-3.464)   {};
		\node[big dot] (K8) at (-1.77777,-3.464)   {};
		\node[big dot] (K9) at (-0.11111,-3.464)   {};		
		\node[big dot] (K10) at (-0.33333,-3.464)   {};
		\node[big dot] (K11) at (-0.55555,-3.464)   {};		
		\node[big dot] (K12) at (-0.77777,-3.464)   {};
		\node[big dot] (K13) at (-1,-3.464)   {};
		\node[big dot] (K14) at (-1.22222,-3.464)   {};
		\node[big dot] (K15) at (-1.44444,-3.464)   {};		
		\node[big dot] (K16) at (-1.66666,-3.464)   {};
		\node[big dot] (K17) at (-1.88888,-3.464)   {};
		
		\draw[-] (origin) -- (L1);
		\draw[-] (A) -- (L2);
		\draw[-] (B) -- (L3);
		\draw[-] (C) -- (L4);
		\draw[-] (D) -- (L5);
		\draw[-] (E) -- (L6);
		\draw[-] (A) -- (K1);
		\draw[-] (A) -- (K2);
		\draw[-] (A) -- (K3);
		\draw[-] (A) -- (K4);
		\draw[-] (A) -- (K5);
		\draw[-] (A) -- (K6);
		\draw[-] (A) -- (K7);
		\draw[-] (A) -- (K8);
		\draw[-] (A) -- (K9);
		\draw[-] (A) -- (K10);
		\draw[-] (A) -- (K11);
		\draw[-] (A) -- (K12);
		\draw[-] (A) -- (K13);
		\draw[-] (A) -- (K14);
		\draw[-] (A) -- (K15);
		\draw[-] (A) -- (K16);
		\draw[-] (A) -- (K17);
		
		\draw (origin) node[left]{$\rho$};
		\draw (A) node[left]{$v_1$};
		\draw (B) node[left]{$v_2$};
		\draw (C) node[left]{$v_3$};
		\end{tikzpicture}
	\end{minipage}
	\begin{minipage}{.10\textwidth}
		$\cdots\cdots$
	\end{minipage}
	\caption{A (sub-)sequence of trees $\{T^k\}_k$ (from left to right) satisfying the big bang condition, but such that $\spread(T^k)$ does not tend to 0.}\label{Fig:BigB2}
\end{figure}
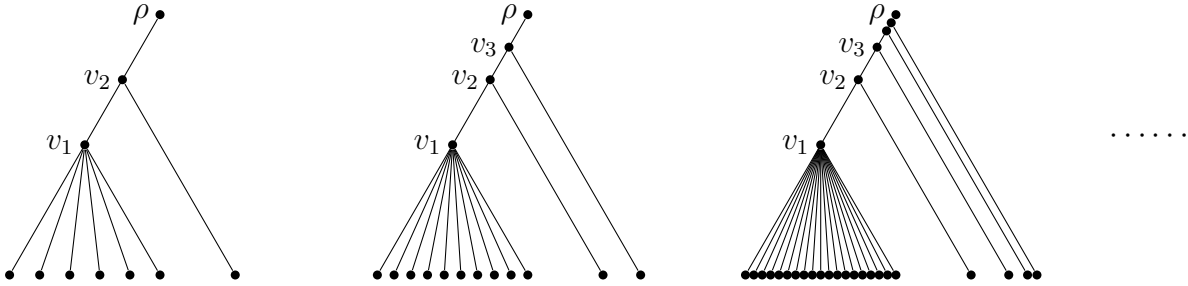 
We show however that, if the big bang condition holds,
then one can find a sequence of arbitrarily large restrictions with vanishing spread.
(Restrictions were introduced in Definition~\ref{def:restriction}.)
Our main result of this section is the following lemma. 
\begin{lemma}[Big bang and spread]
	\label{lemma:bigbang-spread}
	Let $\{T^k\}_k$ be a sequence of trees satisfying our standing assumptions  (i) and (ii). 
	The big bang condition holds if and only if there exists a nested sequence of restrictions $\widetilde{T}^{k}$ of $T^k$ such that $|\partial \widetilde{T}^k| \to \infty$ and $\{\widetilde{T}^k\}_k$ has vanishing spread.
\end{lemma}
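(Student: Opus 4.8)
The plan is to prove the two implications separately; the ``if'' direction is a short second-moment count combined with the fact that truncations and restrictions commute, while the ``only if'' direction is the substantive part and requires a phased (essentially diagonal) construction of the restrictions. For the ``if'' direction, suppose $\tilde T^k$ is a nested sequence of restrictions of $T^k$ with $|\partial \tilde T^k|\to\infty$ and $\spread(\tilde T^k)\to 0$. I would first show that \emph{any} tree sequence with vanishing spread satisfies the big bang condition. Fix $s\in(0,1]$ and set $m_k=|\partial \tilde T^k|$, $N_k=|\partial \tilde T^k(s)|$. Each leaf $u$ of $\tilde T^k$ determines a unique leaf of $\tilde T^k(s)$, namely $u$ itself if $\ell_u\le s$ and otherwise the point of the root path to $u$ at distance $s$; this partitions $\partial\tilde T^k$ into $N_k$ groups, and any two distinct leaves in the same group share a path of length at least $s$, so $\ell_{xy}\wedge 1\ge s$ for such pairs. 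Writing $m_1,\dots,m_{N_k}$ for the group sizes, Cauchy--Schwarz gives $\sum_i m_i^2\ge m_k^2/N_k$, so the number of ordered same-group pairs is at least $m_k^2/N_k-m_k$ and
\[
\spread(\tilde T^k)\ \ge\ s\,\frac{m_k^2/N_k-m_k}{m_k(m_k-1)}\ \ge\ s\left(\frac{1}{N_k}-\frac{1}{m_k}\right),
\]
whence $1/N_k\le s^{-1}\spread(\tilde T^k)+1/m_k\to 0$ and $N_k\to\infty$. Finally, since $\tilde T^k$ is a restriction of $T^k$, every leaf of $\tilde T^k(s)$ is also a leaf of $T^k(s)$ (a truncation point at distance exactly $s$, or a leaf of $T^k$ at distance at most $s$), so $|\partial T^k(s)|\ge|\partial \tilde T^k(s)|\to\infty$; as larger $s$ only increases the count, $\{T^k\}_k$ satisfies the big bang condition.

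For the ``only if'' direction, assume the big bang condition and build the leaf sets $L^k\subseteq\partial T^k$ defining $\tilde T^k$ in phases. Fix a leaf $u_0\in\partial T^1$, a sequence $\epsilon_j\downarrow 0$ in $(0,1]$, and integers $n_j\uparrow\infty$ with $n_1=2$ and $n_j^2\le \epsilon_{j+1}\,n_{j+1}(n_{j+1}-1)$ for all $j$. Using the big bang condition with parameter $\epsilon_j$, choose $1\le K_1<K_2<\cdots$ with $|\partial T^k(\epsilon_j)|\ge n_j$ for all $k\ge K_j$. Pick $L_1\subseteq\partial T^{K_1}$ with $|L_1|=n_1$, $u_0\in L_1$, and the two leaves of $L_1$ lying below distinct leaves of $T^{K_1}(\epsilon_1)$. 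Recursively, given $L_j\subseteq\partial T^{K_j}\subseteq\partial T^{K_{j+1}}$ with $|L_j|=n_j$, consider the partition of $\partial T^{K_{j+1}}$ induced by $T^{K_{j+1}}(\epsilon_{j+1})$ (so two leaves below distinct truncation leaves share a path of length $\le\epsilon_{j+1}$): there are at least $n_{j+1}$ groups, at most $n_j$ of which meet $L_j$, so we may choose one leaf from each of $n_{j+1}-n_j$ groups disjoint from $L_j$ and set $L_{j+1}:=L_j\sqcup A_{j+1}$ with $A_{j+1}$ these new leaves, so that $L_j\subseteq L_{j+1}\subseteq\partial T^{K_{j+1}}$ and $|L_{j+1}|=n_{j+1}$. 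Define $L^k:=\{u_0\}$ for $k<K_1$ and $L^k:=L_j$ for $K_j\le k<K_{j+1}$, and let $\tilde T^k$ be the restriction of $T^k$ to $L^k$.

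It remains to verify the three properties. \emph{Nestedness:} $L^k$ is constant on each block and $L_j\subseteq L_{j+1}$ across blocks, and since $T^{k-1}$ is a restriction of $T^k$ the restriction of $T^k$ to $L^{k-1}\subseteq L^k$ coincides with $\tilde T^{k-1}$, so $\tilde T^{k-1}$ is a restriction of $\tilde T^k$. \emph{Growth:} $|\partial\tilde T^k|=|L^k|=n_j\to\infty$. \emph{Vanishing spread:} for $k$ in block $j$ we have $L^k=L_j\subseteq\partial T^{K_j}$, and since $T^{K_j}$ is a restriction of $T^k$ (hence contains, with each of its leaves, the entire root path) the tree $\tilde T^k$ equals the restriction of $T^{K_j}$ to $L_j$. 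For $j\ge 2$, write $L_j=L_{j-1}\sqcup A_j$: at most $n_{j-1}^2$ of the $n_j(n_j-1)$ ordered pairs of leaves of $L_j$ lie inside $L_{j-1}$ (contributing at most $1$ each), while every remaining pair has an endpoint in $A_j$, whose members lie below pairwise distinct leaves of $T^{K_j}(\epsilon_j)$, all distinct from those met by $L_{j-1}$, so such a pair shares a path of length $\le\epsilon_j$; hence
\[
\spread(\tilde T^k)\ \le\ \frac{n_{j-1}^2}{n_j(n_j-1)}+\epsilon_j\ \le\ 2\epsilon_j ,
\]
and in block $1$ directly $\spread(\tilde T^k)\le\epsilon_1$. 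Since $k\to\infty$ forces $j\to\infty$, we conclude $\spread(\tilde T^k)\to 0$.

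The main obstacle, and the reason the forward direction is not immediate, is the nestedness constraint: for each fixed $k$ the big bang condition does yield a large family of leaves pairwise diverging near the root, but the optimal such families for successive $k$ need not be nested, so one cannot just take their union. The phased construction sidesteps this by holding the leaf set fixed over long stretches of $k$ and enlarging it only at the sparse phase boundaries; the quantitative point that makes this work is that a previously committed leaf set of bounded size $n_{j-1}$ contributes only $O(n_{j-1}^2)$ ``bad'' pairs to the spread numerator, negligible against the $\Theta(n_j^2)$ total provided $n_j$ is taken large relative to $n_{j-1}$ and $\epsilon_j$ --- exactly the inequality $n_{j-1}^2\le\epsilon_j n_j(n_j-1)$ imposed above.
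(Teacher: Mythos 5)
Your proof is correct, and its overall skeleton matches the paper's: one direction is a counting argument over the groups of leaves determined by a truncation, and the other extracts one leaf below each point of $\partial T^k(\epsilon_j)$ (so pairs diverge within the truncation and contribute at most $\epsilon_j$ each) and diagonalizes over scales $\epsilon_j\downarrow 0$. The differences are worth noting. In the ``if'' direction you argue directly via Cauchy--Schwarz over all truncation groups and then transfer the count from $\tilde T^k(s)$ to $T^k(s)$, whereas the paper argues by contradiction, using the failure of big bang to pigeonhole a single large group whose internal pairs alone force $\liminf_k \spread(\tilde T^k)\geq s_0/m_0^2>0$; these are the same estimate packaged differently. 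In the ``only if'' direction your construction is genuinely different in mechanics and, in one respect, more careful: you build an increasing sequence of leaf sets $L_j$ that is frozen within blocks and augmented only at phase boundaries, paying for the previously committed pairs (each bounded by $1$ in the spread numerator) with the growth condition $n_{j-1}^2\leq \epsilon_j\,n_j(n_j-1)$, which yields $\spread(\tilde T^k)\leq 2\epsilon_j$ and makes the nestedness of $\{\tilde T^k\}_k$ explicit and easy to verify. The paper instead re-extracts, for every $k$ in block $j$, a fresh $(1-s_j)$-spread restriction with one arbitrary leaf below each point of $\partial T^k(s_j)$; this gives the cleaner bounds $\spread(\tilde T^k)\leq s_j$ and $|\partial\tilde T^k|=|\partial T^k(s_j)|$, but it leaves implicit how the arbitrary per-$k$ choices are coordinated so that the resulting sequence is nested, which is exactly the issue you identify and resolve. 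The trade-off is that your restrictions grow more slowly (only $n_j$ leaves rather than all of $\partial T^k(s_j)$) and carry an extra factor $2$ in the spread bound, neither of which matters for the lemma as stated.
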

\begin{proof}
For the {\em if} part, we argue by contradiction.
Assume the big bang condition fails and let
$\{\widetilde{T}^k\}_k$ be a nested sequence of restrictions of $\{T^k\}_k$ with vanishing spread such that $|\partial \widetilde{T}^k| \to \infty$. Then there exist $s_0\in(0,1)$, $m_0 \geq 1$ and $k_0\geq 1$ such that
$$
|\partial T^k(s_0)| = m_0, \qquad \forall k \geq k_0.
$$
Also, by the nested property, the truncation $T^k(s_0)$ remains the same for all $k\geq k_0$. We show that at least one of the subtrees of $\widetilde{T}^k$ rooted at a point in $\partial T^k(s_0)$ makes a large contribution to the spread. For $k \geq k_0$ and $z \in \partial T^k(s_0)$, let $\partial \widetilde{T}^k_{[z]}$ be the leaves of $\widetilde{T}^k$ below $z$. Then, since
$$
\sum_{z \in \partial T^k(s_0)} \left|\partial \widetilde{T}^k_{[z]}\right| = |\partial \widetilde{T}^k|,
$$
there is a $z_k \in \partial T^k(s_0)$ such that
\begin{equation}
\label{eq:m0k}
\left|\partial \widetilde{T}^k_{[z_k]}\right| \geq \left\lceil\frac{|\partial \widetilde{T}^k|}{m_0}\right\rceil.
\end{equation}
Observe that, for all distinct $x,y$ in $\partial \widetilde{T}^k_{[z_k]}$, it holds that $\ell_{xy} \geq s_0$ because the paths to $x$ and $y$ share at least the path to $z_k$. Then, counting only the contribution from $\partial \widetilde{T}^k_{[z_k]}$, we get the following bound on the spread of $\widetilde{T}^k$
$$
\spread(\widetilde{T}^k)
\geq \frac{\left|\partial \widetilde{T}^k_{[z_k]}\right|\left(\left|\partial \widetilde{T}^k_{[z_k]}\right| - 1\right) s_0}{|\partial \widetilde{T}^k| (|\partial \widetilde{T}^k| - 1)}.
$$
By~\eqref{eq:m0k},
$$
\liminf_{k \to \infty} \spread(\widetilde{T}^k)
\geq \frac{s_0}{m_0^2} > 0.
$$
As a result, $\{\widetilde{T}^k\}_k$ does not have vanishing spread.

For the {\em only if} part, assume the big bang condition holds. For every $k \geq 1$ and $s\in (0,1)$, we extract a $(1-s)$-spread restriction $\widetilde{T}^{k,s}$ of $T^k$
as follows. See Figure~\ref{Fig:Subtree} for an illustration.
\begin{figure}
	\begin{minipage}{.20\textwidth}
		\quad
	\end{minipage}
	\begin{minipage}{.4\textwidth}
		\begin{tikzpicture}
		\node[big dot] (origin) at (0,0) {};
		\coordinate (X) at (-0.75,-1.299) {};
		\node[big dot] (B) at (-0.5,-0.866)   {};
		\coordinate (L3) at (-.25,-1.299)   {};
		\node[big dot] (C) at (-0.25,-0.433)   {};
		\coordinate (L4) at (.25,-1.299)   {};
		\draw[-] (origin) -- (X);
		\draw[-] (B) -- (L3);
		\draw[-] (C) -- (L4);
		\draw (origin) node[left]{$\rho$};
		\draw (B) node[left]{$v_2$};
		\draw (C) node[left]{$v_3$};
		\draw[<->] (1,0) -- node[anchor=west] {$s$} (1,-1.299);
		\end{tikzpicture}
	\end{minipage}
	\begin{minipage}{.4\textwidth}
		\begin{tikzpicture}
		\node[big dot] (origin) at (0,0) {};
		\node[big dot] (A) at (-1,-1.732) {};
		\node[big dot] (X) at (-0.75,-1.299) {};
		\draw (X) node[anchor=north west]{$x$};
		\node[big dot,color=black!50] (L1) at (-2,-3.464)   {};
		\node[big dot,color=black!50] (L2) at (0,-3.464)   {};
		\node[big dot,color=black!50] (B) at (-0.5,-0.866)   {};
		\node[big dot,color=black!50] (L3) at (1,-3.464)   {};
		\node[big dot,color=black!50] (C) at (-0.25,-0.433)   {};
		\node[big dot,color=black!50] (L4) at (1.5,-3.464)   {};
		\node[big dot,color=black!50] (K1) at (-0.22222,-3.464)   {};
		\node[big dot,color=black!50] (K2) at (-0.44444,-3.464)   {};		
		\node[big dot,color=black!50] (K3) at (-0.66666,-3.464)   {};
		\node[big dot,color=black!50] (K4) at (-0.88888,-3.464)   {};
		\node[big dot] (K5) at (-1.11111,-3.464)   {};
		\node[big dot,color=black!50] (K6) at (-1.33333,-3.464)   {};		
		\node[big dot,color=black!50] (K7) at (-1.55555,-3.464)   {};
		\node[big dot,color=black!50] (K8) at (-1.77777,-3.464)   {};
		
		\draw[-,color=black!50] (A) -- (L1);
		\draw[-,color=black!50] (A) -- (L2);
		\draw[-,very thick] (origin) -- (A);
		\draw[-,very thick] (B) -- (L3);
		\draw[-,very thick] (C) -- (L4);
		\draw[-,color=black!50] (A) -- (K1);
		\draw[-,color=black!50] (A) -- (K2);
		\draw[-,color=black!50] (A) -- (K3);
		\draw[-,color=black!50] (A) -- (K4);
		\draw[-,very thick] (A) -- (K5);
		\draw (K5) node[below]{\small{$x^*$}};
		\draw[-,color=black!50] (A) -- (K6);
		\draw[-,color=black!50] (A) -- (K7);
		\draw[-,color=black!50] (A) -- (K8);
		
		\draw (origin) node[left]{$\rho$};
		\draw (A) node[left]{$v_1$};
		\draw (B) node[left]{$v_2$};
		\draw (C) node[left]{$v_3$};
		
		\draw[<->] (1,0) -- node[anchor=west] {$s$} (1,-1.299);
		\draw[-,dotted,very thick] (-1.25,-1.299) -- (1.25,-1.299);    
		
		\end{tikzpicture}
	\end{minipage}
	\caption{Consider again the second tree in Figure~\ref{Fig:BigB2}. On the left side, $T^k(s)$ is shown where $k=3$. On the right side, the subtree $\widetilde{T}^{k,s}$ is highlighted.}\label{Fig:Subtree}
\end{figure}
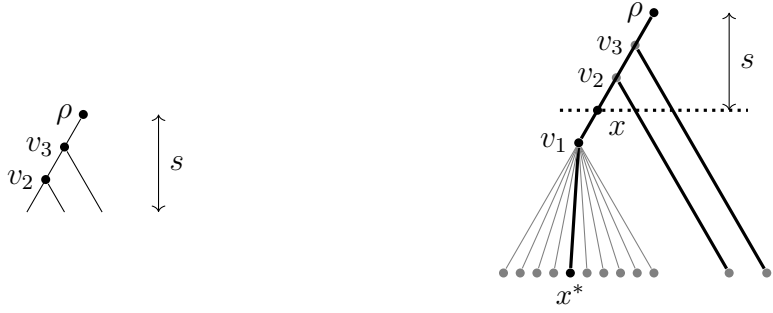
Let $\partial T^k(s) = \{z_1,\ldots,z_m\}$.
For each $z_i$, $i=1,\ldots,m$, pick an arbitrary leaf $x_i \in \partial T^k_{[z_i]}$ in the subtree $T^k_{[z_i]}$ of $T^k$ rooted at $z_i$. We let $\widetilde{T}^{k,s}$ be the
restriction of $T^k$ to $\{x_1,\ldots,x_m\}.$ 
Observe that $\widetilde{T}^{k,s}$ is $(1-s)$-spread 
because the paths to each pair of leaves 
in $\partial \widetilde{T}^{k,s}$ diverge
within $T^k(s)$. To construct a sequence of restrictions
with vanishing spread, we take a sequence of positive reals $(s_i)_{i \geq 1}$ with $s_i \downarrow 0$
and proceed as follows:
\begin{itemize}
\item Let $k_1 \geq 1$ be such that $|\partial T^{k}(s_2)| \geq 2$ for all $k > k_1$. The value $k_1$ exists under the big bang condition.	For all $k \leq k_1$, let $\widetilde{T}^k = \widetilde{T}^{k,s_1}$.

\item Let $k_2 > k_1$ be such that $|\partial T^{k}(s_3)| \geq 3$ for all $k > k_2$. The value $k_2$ exists under the big bang condition.	For all $k_1 < k \leq k_2$, let $\widetilde{T}^k = \widetilde{T}^{k,s_2}$.

\item And so forth.
\end{itemize}
By construction, for all $k_{j-1} < k \leq k_{j}$, it holds
that $\spread(\widetilde{T}^{k}) \leq s_j$ and 
$
|\partial \widetilde{T}^k| 
= |\partial T^k(s_j)| 
\geq j.
$
Thus,
$$
\limsup_{k\to \infty} \spread(\widetilde{T}^k) = 0,
$$
and
$$
\lim_k |\partial \widetilde{T}^k| = +\infty,
$$
as required.
\end{proof}

\section{Impossibility of reconstruction}
\label{sec:impossibility}

The goal of this section is to show that, in the absence of downstream disjointness, the big bang condition is necessary for consistent root reconstruction.
The following proposition implies the {\em only if} part of Theorem~\ref{thm:1}.
\begin{prop}[Impossibility of reconstruction without the big bang condition]\label{T:Imposs}
	Let $\{T^k\}_k$ and $(\mathbf{P}_t)_t$ satisfy our standing assumptions (i) and (ii), and let $\pi$ be a probability distribution on $\mathcal{S}$. 
	Assume that neither downstream disjointness nor the big bang condition hold.
	Then consistent reconstruction of the root state is impossible, in the sense that there exists an $\epsilon > 0$ such that for all $k \geq 1$
	\begin{equation}\label{E:Imposs}
	\sup_{F_k} \P^{\pi}\left[F_k(X^k_{\partial T^k})=X^k_{\rho}\right] \leq 1-\epsilon,
	\end{equation}
	where the supremum is over all 
	root estimators
	$F_k : \mathcal{S}^{\partial T^k} \to \mathcal{S}$.
\end{prop}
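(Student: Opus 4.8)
The plan is to extract one ingredient from each failed hypothesis and then run a data-processing argument through a finite ``bottleneck'' near the root. Since the big bang condition fails, there is $s_0 \in (0,\infty)$ with $m_0 := \sup_k |\partial T^k(s_0)| < \infty$ — the supremum is attained in the limit because $k \mapsto |\partial T^k(s_0)|$ is nondecreasing, by the nested structure — and we fix $k_0$ with $|\partial T^{k_0}(s_0)| = m_0$. Note that for all $k \ge k_0$ the edges of $T^k$ incident to $\rho$ are frozen: adding a new leaf edge at $\rho$ would introduce a new branch at depth $0 < s_0$ and strictly increase $|\partial T^k(s_0)|$. Since downstream disjointness fails, there are $i \neq j$ with $\pi(i)\wedge\pi(j) > 0$ and a common reachable state $c \in \mathcal{D}_i \cap \mathcal{D}_j$, so $p_{ic}(t) > 0$ and $p_{jc}(t) > 0$ for all $t > 0$.

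Fix $k \ge k_0$. The finite set $\partial T^k(s_0)$, of size at most $m_0$, separates $\rho$ from every leaf of $T^k$, so by the Markov (branching) property of the $\mathbf{P}_t$-chain on the tree, $X^k_\rho$ and $X^k_{\partial T^k}$ are conditionally independent given $W^k := (X^k_z)_{z \in \partial T^k(s_0)}$. Hence for any estimator $F_k$ and any configuration $\mathbf w$, the random variable $F_k(X^k_{\partial T^k})$ is conditionally independent of $X^k_\rho$ given $\{W^k = \mathbf w\}$, and therefore
\[
\P^{\pi}\bigl[F_k(X^k_{\partial T^k}) = X^k_\rho \bigm| W^k = \mathbf w\bigr]
\;\le\; \max_{a \in \mathcal{S}} \P^{\pi}\bigl[X^k_\rho = a \bigm| W^k = \mathbf w\bigr].
\]
Taking $\mathbf w = \mathbf c_k := (c,\dots,c)$ and using $i \neq j$, the right-hand side is at most $1 - \min\{\P^{\pi}[X^k_\rho = i \mid W^k = \mathbf c_k],\, \P^{\pi}[X^k_\rho = j \mid W^k = \mathbf c_k]\}$. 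Since the $\mathbf{P}_t$-chain on $T^k$ restricted to $T^k(s_0)$ is itself a $\mathbf{P}_t$-chain on $T^k(s_0)$, the two posteriors equal $\pi(i)\mathcal{L}^i_{T^k(s_0)}(\mathbf c_k)/\P^{\pi}[W^k = \mathbf c_k]$ and $\pi(j)\mathcal{L}^j_{T^k(s_0)}(\mathbf c_k)/\P^{\pi}[W^k = \mathbf c_k]$ by Bayes' rule; combining the displayed bound at $\mathbf w = \mathbf c_k$ with the trivial bound for $\mathbf w \neq \mathbf c_k$ then yields
\[
\sup_{F_k}\P^{\pi}\bigl[F_k(X^k_{\partial T^k}) = X^k_\rho\bigr]
\;\le\; 1 - \min\bigl\{\pi(i)\,\mathcal{L}^i_{T^k(s_0)}(\mathbf c_k),\; \pi(j)\,\mathcal{L}^j_{T^k(s_0)}(\mathbf c_k)\bigr\}.
\]

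The crux — and the only delicate point — is to bound $\mathcal{L}^i_{T^k(s_0)}(\mathbf c_k)$ and $\mathcal{L}^j_{T^k(s_0)}(\mathbf c_k)$ below by positive constants that do not depend on $k$; the difficulty is that $T^k(s_0)$ may acquire an unbounded number of (short) edges as $k \to \infty$, so a naive product bound could degenerate. I would use the specific internal-state assignment in~\eqref{eq:def-lit} that sends $\rho \mapsto i$ and every other vertex of $T^k(s_0)$ to $c$, which gives $\mathcal{L}^i_{T^k(s_0)}(\mathbf c_k) \ge \bigl(\prod_{e \ni \rho} p_{ic}(\ell_e)\bigr)\bigl(\prod_{e \not\ni \rho} p_{cc}(\ell_e)\bigr)$, the products ranging over edges of $T^k(s_0)$. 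The first product is over the finitely many edges incident to $\rho$, which are frozen for $k \ge k_0$, so it is a fixed positive number; for the second, $p_{cc}(t) \ge e^{-q_c t}$ and the total edge length of $T^k(s_0)$ is at most $m_0 s_0$ (it is covered by at most $m_0$ root-to-leaf paths, each of length $\le s_0$ by the uniformly bounded height), so $\prod_{e \not\ni \rho} p_{cc}(\ell_e) \ge e^{-q_c m_0 s_0}$. Hence $\mathcal{L}^i_{T^k(s_0)}(\mathbf c_k) \ge \delta_i > 0$ uniformly in $k \ge k_0$, and likewise for $j$, so~\eqref{E:Imposs} holds for all $k \ge k_0$ with $\epsilon := \min\{\pi(i)\delta_i,\pi(j)\delta_j\}$. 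Finally, for $k < k_0$ the tree $T^k$ is a restriction of $T^{k_0}$, so any root estimator on $T^k$ extends to one on $T^{k_0}$ with the same success probability; thus $\sup_{F_k}\P^{\pi}[F_k(X^k_{\partial T^k}) = X^k_\rho] \le \sup_{F_{k_0}}\P^{\pi}[F_{k_0}(X^{k_0}_{\partial T^{k_0}}) = X^{k_0}_\rho] \le 1 - \epsilon$, and the same $\epsilon$ works for every $k \ge 1$.
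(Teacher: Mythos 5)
Your proof is correct, and it reaches the conclusion by a more direct, self-contained route than the paper. Both arguments share the same combinatorial core: failure of the big bang condition freezes the truncation $T^k(s_0)$ for $k\geq k_0$, so $\partial T^k(s_0)$ is a fixed finite bottleneck; failure of downstream disjointness supplies $i\neq j$ with $\pi(i)\land\pi(j)>0$ and a common reachable state $c$; and the Markov property gives conditional independence of $X^k_\rho$ and $X^k_{\partial T^k}$ given the bottleneck states. The paper then proceeds modularly: overlapping supports of the two bottleneck laws give total variation distance bounded away from $1$ uniformly in $k$, this is pushed through the data-processing computation \eqref{eq:loss-tv}, and the conclusion follows from the characterization in Lemma~\ref{lem:charac}, which rests on the MAP bound \eqref{eq:recon-upper} of Lemma~\ref{lemma:info-facts}. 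You bypass both lemmas by conditioning on the specific bottleneck configuration $(c,\ldots,c)$, bounding the conditional success probability by the largest posterior, and lower-bounding both posteriors through a single internal-state assignment in \eqref{eq:def-lit}, with the frozen edges at $\rho$ handling the $p_{ic}$ factors and the total-length bound $m_0 s_0$ handling the $p_{cc}$ factors; this yields an explicit $\epsilon$, whereas the paper's route trades explicitness for reuse of machinery needed anyway for the converse direction. One small patch: your justification that the edges at $\rho$ are frozen only addresses a new leaf edge attached exactly at $\rho$, but an attachment at \emph{any} point at depth strictly less than $s_0$ (possibly subdividing an edge incident to $\rho$) would likewise create a new leaf of $T^k(s_0)$ and contradict maximality of $m_0$; that is the statement you actually need, since it freezes the whole truncation and hence the edges of $T^k(s_0)$ at $\rho$ over which your first product runs (attachments at depth at least $s_0$ may still subdivide an edge of $T^k$ at $\rho$, but they do not affect $T^k(s_0)$). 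Your treatment of $k<k_0$ by extending estimators to $T^{k_0}$ is fine, using the consistency of the leaf marginals under restriction.
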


\subsection{Information-theoretic bounds}

To prove Proposition~\ref{T:Imposs}, 
we need some information-theoretic bounds that relate the
best achievable reconstruction probability to the total variation distance between the conditional distributions of pairs of initial states. Our first bound says roughly that the reconstruction probability is only as good as the worst total variation distance. Our second bound shows that a good reconstruction probability can be obtained from selecting a subset of initial states with high prior probability whose corresponding conditional distributions have ``little overlap.''
See e.g.~\cite[Chapter 2]{CoverThomas:06} and~\cite{SteelSzekely:99,SteelSzekely:02} for some related results.
\begin{lemma}[Information-theoretic bounds]
	\label{lemma:info-facts}
	Let $Y_0$ and $Y_1$ be random variables taking values in the countable spaces $\mathcal{Y}_0$ and $\mathcal{Y}_1$ respectively. Let $\mu_0$ denote the distribution of $Y_0$ and let $\mu_1^i$ denote the distribution of $Y_1$ conditioned on $\{Y_0 = i\}$. 
	\begin{enumerate}
		\item {\em (Reconstruction upper bound)} 
		It holds that
		\begin{equation}\label{eq:recon-upper}
		\sup_{f:\mathcal{Y}_1 \to \mathcal{Y}_0} \P[f(Y_1) = Y_0] \leq 
		1 -
		\sup_{i_1 \neq i_2 \in \mathcal{Y}_0}
		\left\{
		\mu_0(i_1) \land \mu_0(i_2) 
		\left[
		1 
		- 
		\|\mu_1^{i_1} - \mu_1^{i_2}\|_\tv
		\right]
		\right\}.
		\end{equation}
		
		\item {\em (Reconstruction lower bound)} 
		For any $\Lambda \subseteq \mathcal{Y}_0$, 
		it holds that
		\begin{equation}\label{eq:recon-lower}
		\sup_{f:\mathcal{Y}_1 \to \mathcal{Y}_0} \P[f(Y_1) = Y_0] \geq 
		\sum_{i \in \Lambda} \mu_0(i)-
		\sum_{i_1 \neq i_2 \in \Lambda}
		\left\{
		\mu_0(i_1) \lor \mu_0(i_2) 
		\left[
		1 
		- 
		\|\mu_1^{i_1} - \mu_1^{i_2}\|_\tv
		\right]
		\right\}.
		\end{equation}		
		
	\end{enumerate}
\end{lemma}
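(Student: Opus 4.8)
The plan is to prove both bounds by working with the maximum a posteriori (MAP) estimator, which is the optimal estimator for the $0$--$1$ loss, and then to estimate its success probability by pairing up initial states. For the reconstruction upper bound, I would first recall that $\sup_{f} \P[f(Y_1) = Y_0] = \sum_{y \in \mathcal{Y}_1} \max_{i \in \mathcal{Y}_0} \P[Y_0 = i, Y_1 = y] = \sum_{y} \max_i \mu_0(i)\,\mu_1^i(y)$, the Bayes-optimal success probability. The complementary error probability is then $\sum_y \big(\sum_i \mu_0(i)\mu_1^i(y) - \max_i \mu_0(i)\mu_1^i(y)\big)$. To lower-bound this error, fix any pair $i_1 \neq i_2$; for each $y$, the sum minus its max is at least $\min\{\mu_0(i_1)\mu_1^{i_1}(y),\, \mu_0(i_2)\mu_1^{i_2}(y)\}$ (the smaller of the two terms indexed by $i_1,i_2$ survives), which is at least $\big(\mu_0(i_1)\wedge\mu_0(i_2)\big)\cdot\big(\mu_1^{i_1}(y)\wedge\mu_1^{i_2}(y)\big)$. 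Summing over $y$ and using the identity $\sum_y \mu_1^{i_1}(y)\wedge\mu_1^{i_2}(y) = 1 - \|\mu_1^{i_1} - \mu_1^{i_2}\|_\tv$ from~\eqref{eq:def-tv} gives error $\geq \mu_0(i_1)\wedge\mu_0(i_2)\,[1 - \|\mu_1^{i_1}-\mu_1^{i_2}\|_\tv]$; taking the sup over pairs $i_1 \neq i_2$ and rearranging yields~\eqref{eq:recon-upper}.

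For the reconstruction lower bound, I would exhibit a specific (not necessarily optimal) estimator and compute its success probability from below, or equivalently bound the Bayes error from above. Restrict attention to the event $\{Y_0 \in \Lambda\}$, which costs $\sum_{i \notin \Lambda}\mu_0(i) = 1 - \sum_{i\in\Lambda}\mu_0(i)$ in the worst case. On this event, consider the MAP rule restricted to $\Lambda$; its error can be controlled by a union bound over pairs: the probability that the posterior prefers $i_2$ to the true state $i_1$ (or they tie) is at most the "overlap mass" $\sum_y \min\{\mu_0(i_1)\mu_1^{i_1}(y), \mu_0(i_2)\mu_1^{i_2}(y)\} \leq \big(\mu_0(i_1)\vee\mu_0(i_2)\big)\sum_y \mu_1^{i_1}(y)\wedge\mu_1^{i_2}(y) = \big(\mu_0(i_1)\vee\mu_0(i_2)\big)\,[1 - \|\mu_1^{i_1}-\mu_1^{i_2}\|_\tv]$. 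Summing this over all ordered pairs $i_1 \neq i_2$ in $\Lambda$ and subtracting from $\sum_{i\in\Lambda}\mu_0(i)$ gives~\eqref{eq:recon-lower}. I should double-check whether the pairwise-error bound should range over ordered or unordered pairs, and make the tie-breaking convention explicit so that the union bound is valid; choosing the notation of~\eqref{eq:recon-lower} (ordered pairs, each unordered pair counted twice) gives a clean and valid, if slightly lossy, estimate.

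The main subtlety — and the step I would be most careful about — is the lower bound: one must make sure the event "MAP errs" is genuinely contained in the union over pairs $(i_1, i_2)$ of the events "state $i_2$'s weighted likelihood at the observed $y$ is at least state $i_1$'s, given true state is $i_1$," with a fixed deterministic tie-breaking rule so no error is double-missed, and then apply a union bound. The inequality $\min\{\mu_0(i_1)\mu_1^{i_1}(y),\mu_0(i_2)\mu_1^{i_2}(y)\} \le (\mu_0(i_1)\vee\mu_0(i_2))(\mu_1^{i_1}(y)\wedge\mu_1^{i_2}(y))$ is the one quantitative step, and it holds since $\min\{a_1 b_1, a_2 b_2\} \le (a_1\vee a_2)(b_1 \wedge b_2)$ for nonnegative reals. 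Everything else — rewriting the Bayes risk, invoking~\eqref{eq:def-tv}, and bookkeeping the prior mass outside $\Lambda$ — is routine.
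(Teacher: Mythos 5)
Your proposal is correct and follows essentially the same route as the paper: the upper bound via the Bayes-optimal (MAP) success probability $\sum_y \max_i \mu_0(i)\mu_1^i(y)$ and the pairwise overlap bound $\min\{a_1b_1,a_2b_2\}\ge (a_1\land a_2)(b_1\land b_2)$, and the lower bound via the MAP rule restricted to $\Lambda$ with the bound $\min\{a_1b_1,a_2b_2\}\le (a_1\lor a_2)(b_1\land b_2)$ and the total-variation identity~\eqref{eq:tv-min}. Your union-bound phrasing of the restricted-MAP error over ordered pairs (where ties harmlessly enlarge the union) is just a repackaging of the paper's term-by-term bound on the non-selected posterior mass, so the two arguments coincide in substance.
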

\begin{proof}
	For both bounds, our starting point is the formula
	\begin{equation}\label{eq:startig-point}
	\P[f(Y_1) = Y_0]
	=
	\sum_{i \in \mathcal{Y}_0} \sum_{j \in \mathcal{Y}_1}
	{\bf 1}_{\{f(j) = i\}}\, \mu_1^i(j)\, \mu_0(i).
	\end{equation}
	We will also need the following alternative
	expression for total variation distance
	\begin{equation}
	\label{eq:tv-min}
	\|\mu_1^{i_1} - \mu_1^{i_2}\|_\tv
	= 
	1 
	- 
	\sum_{j \in \mathcal{Y}_1} 
	\mu_1^{i_1}(j) \land \mu_1^{i_2}(j),
	\end{equation}
	which follows from the last equality in~\eqref{eq:def-tv}.
	
	To derive~\eqref{eq:recon-upper}, observe first that
	by~\eqref{eq:startig-point} for any $f$
	\begin{align*}
	\P[f(Y_1) = Y_0]
	\leq
	\sum_{j \in \mathcal{Y}_1}
	\sup_{i \in \mathcal{Y}_0} 
	\mu_1^i(j)\, \mu_0(i)
	=
	\P[f^*(Y_1) = Y_0],
	\end{align*}
	where $f^*$ is a maximum a posteriori estimate
	\begin{equation}\label{eq:map}
	f^*(j) 
	\in \underset{i \in \mathcal{Y}_0}{\arg\max} \left\{ \mu_1^i(j)\, \mu_0(i)\right\}.
	\end{equation}
	This is well-known; see e.g.~\cite[Chapter 4]{Lehmann:98}.
	The maximum above is indeed attained because 
	$\mu_1^i(j)\, \mu_0(i)$ is summable over $i$, and therefore must converge to $0$. Then, by~\eqref{eq:startig-point} applied to $f = f^*$,
	for any $i_1 \neq i_2 \in \mathcal{Y}_0$
	\begin{align*}
	\P[f^*(Y_1) = Y_0]
	&\leq
	\sum_{i \in \mathcal{Y}_0} \sum_{j \in \mathcal{Y}_1}
	\mu_1^i(j)\, \mu_0(i)
	- 
	\sum_{j \in \mathcal{Y}_1}
	[\mu_1^{i_1}(j)\, \mu_0(i_1)]
	\land 
	[\mu_1^{i_2}(j)\, \mu_0(i_2)]\\
	&\leq 
	1
	-
	\left(\mu_0(i_1)
	\land 
	\mu_0(i_2)\right)
	\sum_{j \in \mathcal{Y}_1}
	\mu_1^{i_1}(j)
	\land 
	\mu_1^{i_2}(j).
	\end{align*}
	Bound~\eqref{eq:recon-upper} then follows
	from~\eqref{eq:tv-min} and taking
	a supremum over $i_1 \neq i_2$.
	
	For~\eqref{eq:recon-lower}, 
	define the approximate maximum a posteriori estimator
	$$
	f^*_\Lambda(j) 
	\in \underset{i \in \Lambda}{\arg\max} \left\{ \mu_1^i(j)\, \mu_0(i)\right\},
	$$
	where note that, this time, the supremum
	is over $\Lambda$ only.
	Then~\eqref{eq:startig-point} applied to $f = f^*_\Lambda$ implies
	\begin{align*}
	\P[f^*_\Lambda(Y_1) = Y_0]
	&= 
	\sum_{i \in \Lambda} \sum_{j \in \mathcal{Y}_1}
	{\bf 1}_{\{f^*_\Lambda(j) = i\}}\, \mu_1^i(j)\, \mu_0(i)\\
	&=
	\sum_{i \in \Lambda} \sum_{j \in \mathcal{Y}_1}
	\mu_1^i(j)\, \mu_0(i)
	- 
	\sum_{j \in \mathcal{Y}_1}
	\sum_{i \neq f^*_\Lambda(j)}
	\mu_1^{i}(j)\, \mu_0(i)\\
	&\geq 
	\sum_{i \in \Lambda} \mu_0(i)
	-
	\sum_{j \in \mathcal{Y}_1}
	\sum_{i_1 \neq i_2 \in \Lambda}
	[\mu_1^{i_1}(j)\mu_0(i_1)]
	\land 
	[\mu_1^{i_2}(j)\mu_0(i_2)]\\
	&\geq 
	\sum_{i \in \Lambda} \mu_0(i)
	-
	\sum_{i_1 \neq i_2 \in \Lambda}
	\left\{
	\left(\mu_0(i_1)\lor \mu_0(i_2)\right)
	\sum_{j \in \mathcal{Y}_1}
	\mu_1^{i_1}(j)
	\land 
	\mu_1^{i_2}(j)
	\right\}.
	\end{align*}
	By~\eqref{eq:tv-min}, that implies~\eqref{eq:recon-lower}
	and concludes the proof.
\end{proof}

\subsection{Characterization of consistent root reconstruction}

From Lemma~\ref{lemma:info-facts}, we obtain 
a characterization of consistent root reconstruction
in terms of total variation. This characterization is key to proving both directions of Theorem~\ref{thm:1}.
Recall that $\mathcal{L}^i_{T}$ was defined
in~\eqref{eq:def-lit} as the leaf distribution on $T$ given root state $i$.
\begin{lemma}[Consistent root reconstruction: characterization]
\label{lem:charac}
Let $\{T^k\}_k$ and $(\mathbf{P}_t)_t$ satisfy our standing assumptions (i) and (ii), and let $\pi$ be a probability distribution on $\mathcal{S}$. Then there exists a sequence of
root estimators that is consistent for $\{T^k\}_k$, $(\mathbf{P}_t)_t$ and $\pi$ if and only if
for all $i \neq j \in \mathcal{S}$ such that $\pi(i) \land \pi(j) > 0$
\begin{equation}\label{eq:charac-condition}
\liminf_{k \to \infty}
\|\mathcal{L}^i_{T^k} - \mathcal{L}^j_{T^k}\|_\tv 
= 1.
\end{equation}
\end{lemma}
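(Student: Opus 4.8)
The plan is to deduce both directions from Lemma~\ref{lemma:info-facts} applied with $Y_0 = X^k_{\rho^k}$ and $Y_1 = X^k_{\partial T^k}$, so that $\mu_0 = \pi$ and $\mu_1^i = \mathcal{L}^i_{T^k}$ by definition~\eqref{eq:def-lit}. With this dictionary, the quantity $\sup_{F_k} \P^\pi[F_k(X^k_{\partial T^k}) = X^k_{\rho^k}]$ is exactly $\sup_{f:\mathcal{S}^{\partial T^k}\to\mathcal{S}} \P[f(Y_1) = Y_0]$, and the existence of a consistent sequence of estimators is equivalent to $\liminf_k \sup_{F_k}\P^\pi[F_k(X^k_{\partial T^k}) = X^k_{\rho^k}] = 1$ (for the nontrivial direction of this equivalence, one extracts a concrete sequence achieving the supremum asymptotically by a diagonal argument, as detailed below). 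So it suffices to relate this $\liminf$ to the total-variation condition~\eqref{eq:charac-condition}.

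For the \emph{only if} direction, suppose a consistent sequence exists, so the displayed $\liminf$ equals $1$. By the reconstruction upper bound~\eqref{eq:recon-upper},
$$
\sup_{F_k}\P^\pi[F_k(X^k_{\partial T^k}) = X^k_{\rho^k}] \leq 1 - \sup_{i_1 \neq i_2 \in \mathcal{S}} \left\{ \pi(i_1)\land\pi(i_2)\left[1 - \|\mathcal{L}^{i_1}_{T^k} - \mathcal{L}^{i_2}_{T^k}\|_\tv\right]\right\},
$$
so the supremum on the right tends to $0$. In particular, for each fixed pair $i \neq j$ with $c := \pi(i)\land\pi(j) > 0$, we get $c\,[1 - \|\mathcal{L}^i_{T^k} - \mathcal{L}^j_{T^k}\|_\tv] \to 0$, hence $\|\mathcal{L}^i_{T^k} - \mathcal{L}^j_{T^k}\|_\tv \to 1$; since total variation is at most $1$, this gives $\liminf_k \|\mathcal{L}^i_{T^k} - \mathcal{L}^j_{T^k}\|_\tv = 1$, which is~\eqref{eq:charac-condition}.

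For the \emph{if} direction, assume~\eqref{eq:charac-condition}. Fix $\epsilon > 0$ and choose a finite set $\Lambda \subseteq \{i : \pi(i) > 0\}$ with $\sum_{i \in \Lambda}\pi(i) \geq 1 - \epsilon$. Every pair $i_1 \neq i_2$ in $\Lambda$ satisfies $\pi(i_1)\land\pi(i_2) > 0$, so $\|\mathcal{L}^{i_1}_{T^k} - \mathcal{L}^{i_2}_{T^k}\|_\tv \to 1$ by~\eqref{eq:charac-condition} and the boundedness of total variation. Applying the reconstruction lower bound~\eqref{eq:recon-lower} with this $\Lambda$, and noting that $\pi(i_1)\lor\pi(i_2) \leq 1$ and that $\Lambda$ is finite,
$$
\sup_{F_k}\P^\pi[F_k(X^k_{\partial T^k}) = X^k_{\rho^k}] \geq \sum_{i \in \Lambda}\pi(i) - \sum_{i_1\neq i_2 \in \Lambda}\left\{\pi(i_1)\lor\pi(i_2)\left[1 - \|\mathcal{L}^{i_1}_{T^k} - \mathcal{L}^{i_2}_{T^k}\|_\tv\right]\right\} \geq 1 - \epsilon - o(1)
$$
as $k \to \infty$. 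Since $\epsilon > 0$ was arbitrary, $\liminf_k \sup_{F_k}\P^\pi[F_k(X^k_{\partial T^k}) = X^k_{\rho^k}] = 1$. To upgrade this to an actual consistent sequence of estimators, apply the above with $\epsilon = 1/m$ to obtain finite sets $\Lambda_m$, let $f^*_{\Lambda_m}$ be the associated approximate maximum a posteriori estimator from the proof of Lemma~\ref{lemma:info-facts}, and pick $k_1 < k_2 < \cdots$ so that $\P^\pi[f^*_{\Lambda_m}(X^k_{\partial T^k}) = X^k_{\rho^k}] \geq 1 - 2/m$ for all $k \geq k_m$; setting $F_k := f^*_{\Lambda_m}$ for $k_m \leq k < k_{m+1}$ yields $\liminf_k \P^\pi[F_k(X^k_{\partial T^k}) = X^k_{\rho^k}] = 1$, as required.

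The substantive points are the correct instantiation of Lemma~\ref{lemma:info-facts} and, in the \emph{if} direction, the truncation of the countable state space to a finite high-probability set $\Lambda$ together with the diagonalization that turns an asymptotic ``$\sup$'' statement into a genuine estimator sequence; the total-variation manipulations themselves are routine. There is no computational issue since estimators are arbitrary functions of the leaf data and the model is assumed known.
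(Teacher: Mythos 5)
Your proof is correct and follows essentially the same route as the paper's: both directions come from Lemma~\ref{lemma:info-facts} instantiated with $Y_0 = X^k_{\rho^k}$, $Y_1 = X^k_{\partial T^k}$, using the upper bound~\eqref{eq:recon-upper} for necessity and the lower bound~\eqref{eq:recon-lower} on a finite high-prior-mass set $\Lambda$ for sufficiency. The only cosmetic differences are that you restrict $\Lambda$ to states with $\pi(i)>0$ (a harmless, indeed slightly cleaner, refinement) and upgrade the supremum statement to an explicit estimator sequence by diagonalization, whereas the paper simply observes that the (approximate) maximum a posteriori estimators already attain these bounds for each $k$.
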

\begin{proof}
For the {\em only if} part, assume by contradiction that
there is $i_1 \neq i_2 \in \mathcal{S}$ with $\pi(i_1) \land \pi(i_2) > 0$, $\epsilon > 0$ and $k_0 \geq 1$ 
such that
\begin{align*}
\|\mathcal{L}^{i_1}_{T^k} - \mathcal{L}^{i_2}_{T^k}\|_\tv 
\leq 1 - \epsilon,
\end{align*}
for all $k \geq k_0$. By~\eqref{eq:recon-upper} in Lemma~\ref{lemma:info-facts}, for all $k \geq k_0$ and any root estimator $F_k$
\begin{align*}
\P^\pi[F_k(X^k_{\partial T^k}) = X^k_\rho]
\leq 
1 
-
[\pi(i_1)\land\pi(i_2)]
\,\epsilon
< 1. 
\end{align*}
That proves that consistent root estimation is not possible.

For the {\em if} part, assume~\eqref{eq:charac-condition} holds. 
Fix $\epsilon > 0$ and
let $1 \leq n_\epsilon < +\infty$ be the smallest integer
such that
\begin{equation}
\label{eq:charac-lambdaeps}
\sum_{i \leq n_\epsilon} \pi(i) > 1 - \epsilon,
\end{equation}
and let $\Lambda_\epsilon = \{i\,:\,i\leq n_\epsilon\}$.
Applying~\eqref{eq:recon-lower} in Lemma~\ref{lemma:info-facts} with $\Lambda = \Lambda_\epsilon$, we get by~\eqref{eq:charac-condition} and~\eqref{eq:charac-lambdaeps}
\begin{align*}
\sup_{F_k} \P^\pi[F_k(X^k_{\partial T^k}) = X^k_\rho]
&\geq 
1-\epsilon
- \sum_{i_1 \neq i_2 \in \Lambda_\epsilon}
\left\{
\pi(i_1)
\lor
\pi(i_2)
\left[
1 
-
\|
\mathcal{L}^{i_1}_{T^k} - \mathcal{L}^{i_2}_{T^k}
\|_\tv 
\right]
\right\}\\
&\to 1 - \epsilon,
\end{align*}
as $k \to \infty$.
Because $\epsilon$ is arbitrary, we have shown
that a sequence of maximum posteriori estimates is consistent
for $\{T^k\}_k$, $(\mathbf{P}_t)_t$ and $\pi$.
\end{proof}

\subsection{Proof of Proposition~\ref{T:Imposs}}

We now prove our main result of this section.

\smallskip

\begin{proof}[Proof of Proposition \ref{T:Imposs}]
	Let $\{T^k\}_k$ and $(\mathbf{P}_t)_t$ satisfy our standing assumptions (i) and (ii), and let $\pi$ be a probability distribution on $\mathcal{S}$. 
	Assume that $\{T^k\}_k$ satisfies neither downstream disjointness nor the big bang condition.
	Then, as we argued in the proof of Lemma~\ref{lemma:bigbang-spread}, there exist $s_0\in(0,\infty)$ and $k_0\geq 1$ such that the truncation $T^k(s_0)$ remains unchanged for all $k\geq k_0$. 	Since downstream disjointness fails and $\ell_u > 0$ for all $u \in \partial T^k$ (by the positivity assumption on $\ell$), there are
	$i_1 \neq i_2$ with $\pi(i_1) > 0$ and $\pi(i_2) > 0$ 
	such
	that the supports of $\P^{i_1}[X^k_{\partial T^k(s_0)} \in \cdot\, ]$ and $\P^{i_2}[X^k_{\partial T^k(s_0)} \in \cdot\, ]$ have a non-empty intersection. This holds for all $k$ and implies that
	$$
	\|\P^{i_1}[X^k_{\partial T^k(s_0)} \in \cdot\, ] - \P^{i_2}[X^k_{\partial T^k(s_0)} \in \cdot\, ]\|_\tv < 1.
	$$ 
	Because $T^k(s_0)$ is unchanged after $k_0$,
	it follows that
	\begin{equation}\label{eq:liminf-truncation}
	\liminf_{k \to \infty} \|\P^{i_1}[X^k_{\partial T^k(s_0)} \in \cdot\, ] - \P^{i_2}[X^k_{\partial T^k(s_0)} \in \cdot\, ]\|_\tv < 1.
	\end{equation}
	Finally we observe that, by the triangle inequality and the conditional independence of $X^k_\rho$ and $X^k_{\partial T^k}$ given $X^k_{\partial T^k(s_0)}$,
	we get 
	\begin{align}
	&\|\mathcal{L}^{i_1}_{T^k} - \mathcal{L}^{i_2}_{T^k}\|_\tv\nonumber\\
	&\qquad =
	\frac{1}{2}\sum_{\sigma \in \mathcal{S}^{\partial T^k}}
	\left|
	\mathcal{L}^{i_1}_{T^k}(\sigma)
	- \mathcal{L}^{i_2}_{T^k}(\sigma)
	\right|\nonumber\\
	&\qquad =
	\frac{1}{2}\sum_{\sigma \in \mathcal{S}^{\partial T^k}}
	\left|
	\sum_{\tau \in \mathcal{S}^{\partial T^k(s_0)}}
	\P[X^k_{\partial T^k} = \sigma\,|\,X^k_{\partial T^k(s_0)} = \tau]	
	\left[
	\P^{i_1}[X^k_{\partial T^k(s_0)} = \tau]
	- \P^{i_2}[X^k_{\partial T^k(s_0)} = \tau]
	\right]
	\right|\nonumber\\
	&\qquad \leq \|\P^{i_1}[X^k_{\partial T^k(s_0)} \in \cdot\, ] - \P^{i_2}[X^k_{\partial T^k(s_0)} \in \cdot\, ]\|_\tv.\label{eq:loss-tv}
	\end{align}
	Combining this inequality with~\eqref{eq:liminf-truncation}
	shows by Lemma~\ref{lem:charac} that
	consistent root estimation is not possible in this case.
	That concludes the proof.
\end{proof}

\section{Consistent root reconstruction}\label{S:SAR}
\label{sec:consistency}

In this section, 
we prove the {\em if} part of Theorem~\ref{thm:1}.
Observe first that, under downstream disjointness,
the result is immediate. Let $u \in \partial T^1$
and $I = \{i\,:\, \pi(i) > 0\}$. 
Note that, by the nested property, $u \in \partial T^k$ for all $k$. Then, let $F_k(X^k_{\partial T^k})$
be the state in $I$ from which $X^k_u$ is reachable. 
Downstream disjointness ensures that such a state exists and is unique. We then have $\P^\pi[F_k(X^k_{\partial T^k}) = X^k_\rho] = 1$, proving consistency in that case.

Here we show that the big bang condition also suffices for consistent root reconstruction.
We use the characterization in Lemma~\ref{lem:charac} to reduce the problem to pairs of initial states.
Our strategy is then to extract a ``well-spread'' subtree of $T^k$, as we did in the proof of Lemma~\ref{lemma:bigbang-spread}, and generalize results of~\cite{GascuelSteel:10} on root reconstruction for well-spread trees. 
Formally we prove the following proposition,
which together with Lemma~\ref{lem:charac} and the argument above in the downstream disjointness case, implies
the {\em if} part of Theorem~\ref{thm:1}.
\begin{prop}[Reconstruction under the big bang condition] \label{T:Consistent} 
	Let $\{T^k\}_k$ and $(\mathbf{P}_t)_t$ satisfy our standing assumptions (i) and (ii), and let $\pi$ be a probability distribution on $\mathcal{S}$. 
	Assume that $\{T^k\}_k$ satisfies the big bang condition.
	Then for all $i \neq j \in \mathcal{S}$ such that $\pi(i) \land \pi(j) > 0$
	\begin{equation}\label{eq:charac-big-bang}
	\liminf_{k \to \infty}
	\|\mathcal{L}^i_{T^k} - \mathcal{L}^j_{T^k}\|_\tv 
	= 1.
	\end{equation}
\end{prop}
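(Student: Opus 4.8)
The plan is to prove the contrapositive-free statement directly: fix $i \neq j$ with $\pi(i)\wedge\pi(j)>0$ and show that $\|\mathcal{L}^i_{T^k} - \mathcal{L}^j_{T^k}\|_\tv \to 1$. By Lemma~\ref{lemma:bigbang-spread}, the big bang condition gives us a nested sequence of restrictions $\tilde{T}^k$ of $T^k$ with $|\partial\tilde{T}^k|\to\infty$ and $\spread(\tilde{T}^k)\to 0$. Since passing to a restriction is a form of data processing (it corresponds to marginalizing the leaf distribution onto a subset of leaves, composed with a deterministic map on the tree), the total variation distance can only decrease under restriction; hence it suffices to prove $\|\mathcal{L}^i_{\tilde{T}^k} - \mathcal{L}^j_{\tilde{T}^k}\|_\tv \to 1$. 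So the problem reduces to: for a sequence of trees with vanishing spread and a growing number of leaves, the two leaf distributions indexed by distinct root states become mutually singular.

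For the vanishing-spread case, the strategy is to build a concrete root estimator from the leaf data that succeeds with probability tending to $1$, since the existence of such an estimator forces the TV distance to $1$ (this is the easy direction of the information-theoretic bound, or one can argue directly: if $A_k$ is the event the estimator outputs $i$, then $\mathcal{L}^i_{\tilde{T}^k}(A_k)\to 1$ and $\mathcal{L}^j_{\tilde{T}^k}(A_k)\to 0$). The estimator I would use is essentially the one from Gascuel--Steel: for each state $\sigma$, look at the empirical frequency $\hat{p}_\sigma^k$ of leaves in state $\sigma$, and compare the vector $(\hat{p}_\sigma^k)_\sigma$ against the theoretical vectors $\mathbf{p}^{i'}(h^*)$ — more precisely against the conditional leaf-marginal distributions — picking the root state whose predicted leaf distribution is closest in TV (or in $\ell_1$). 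The key estimate is a concentration bound: conditioned on root state $i'$, the random vector $(\hat{p}_\sigma^k)_\sigma$ concentrates around $\E^{i'}[\hat{p}_\sigma^k] = $ the average over leaves $u$ of $p_{i'\sigma}(\ell_u)$. Because the heights are bounded by $h^*$ and the leaves are ``nearly independent'' when the spread is small, the variance of each $\hat{p}_\sigma^k$ is $O(\spread(\tilde{T}^k)) + O(1/|\partial\tilde{T}^k|)$: the off-diagonal covariance terms are controlled by $\ell_{xy}\wedge 1$ summed over leaf pairs, which is exactly $|\partial\tilde{T}^k|(|\partial\tilde{T}^k|-1)\spread(\tilde{T}^k)$, and the diagonal contributes $O(1/|\partial\tilde{T}^k|)$. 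Both vanish, so $(\hat{p}_\sigma^k)_\sigma \to (\E^{i'}[\hat{p}_\sigma^k])_\sigma$ in probability; since by initial-state identifiability the mean vectors for distinct root states remain separated (one needs a uniform separation — restricting to the finite set $\Lambda_\epsilon$ of high-probability states, or noting that for the fixed pair $i,j$ the separation $\Delta = \inf_k \|\text{mean}^i_k - \text{mean}^j_k\|$ is positive since all heights lie in the compact interval $(0,h^*]$ and $\mathbf{p}^i(\cdot),\mathbf{p}^j(\cdot)$ are continuous and never equal), the nearest-mean estimator is correct with probability $\to 1$.

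The main obstacle is making the separation of the conditional leaf-marginal mean vectors uniform in $k$: the means $\E^{i'}[\hat p^k_\sigma]$ are averages $\frac{1}{|\partial\tilde T^k|}\sum_{u}p_{i'\sigma}(\ell_u)$ over leaves at possibly varying depths, so a priori the mean vector for root state $i$ could drift toward that for root state $j$ along the sequence. I would handle this by observing that each leaf depth $\ell_u \in (0, h^*]$, and for any fixed $t \in (0,h^*]$ we have $\|\mathbf{p}^i(t)-\mathbf{p}^j(t)\|_\tv > 0$; a compactness/continuity argument then shows the mean vectors cannot converge to each other unless the vanishing-spread subtree degenerates, which it does not since $|\partial\tilde T^k|\to\infty$ under big bang — but in fact the cleanest route is to further restrict each $\tilde T^k$ so that all retained leaves have depth exactly $h^{\tilde k}$ or to use the truncation-to-$T^k(s)$ structure and the trivial bound that each individual $\|\mathbf{p}^i(\ell_u) - \mathbf{p}^j(\ell_u)\|_\tv$ controls the mean separation from below via a convexity argument. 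A secondary subtlety is the countable state space: the estimator compares infinitely many coordinates, so I would truncate to the finite set $\Lambda_\epsilon$ carrying all but $\epsilon$ of the prior mass (as in Lemma~\ref{lem:charac} and Theorem~\ref{thm:2}) and absorb the tail into an $\epsilon$ that is then sent to $0$. Once concentration plus uniform separation are in hand, the conclusion $\liminf_k \|\mathcal{L}^i_{T^k}-\mathcal{L}^j_{T^k}\|_\tv = 1$ follows immediately, and feeding this into Lemma~\ref{lem:charac} completes the {\em if} part of Theorem~\ref{thm:1}.
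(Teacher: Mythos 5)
Your overall architecture matches the paper's: extract a vanishing-spread restriction via Lemma~\ref{lemma:bigbang-spread}, note that passing to the restriction only decreases total variation, and then use a spread-based variance bound (your covariance computation is exactly Lemma~\ref{L:var_nj}) plus Chebyshev to show the empirical state frequencies separate the two root states. However, the step you yourself flag as ``the main obstacle'' is a genuine gap, and none of your three proposed fixes closes it. The mean frequency vector on $\tilde{T}^{k,s}$ is the depth-average $\frac{1}{m}\sum_{u}\mathbf{p}^{i}(\ell_u)$, and assumption (ii) only guarantees $\|\mathbf{p}^{i}(t)-\mathbf{p}^{j}(t)\|_\tv>0$ at each \emph{fixed} $t$; it does not rule out that mixtures of the rows over different times coincide (or come arbitrarily close) for $i\neq j$ --- the paper explicitly warns that the expected frequencies at $\partial\tilde{T}^{k,s}$ ``may not uniquely characterize the root state.'' Your ``convexity argument'' goes the wrong way: total variation is jointly convex, so $\bigl\|\frac{1}{m}\sum_u\mathbf{p}^i(\ell_u)-\frac{1}{m}\sum_u\mathbf{p}^j(\ell_u)\bigr\|_\tv\leq\frac{1}{m}\sum_u\|\mathbf{p}^i(\ell_u)-\mathbf{p}^j(\ell_u)\|_\tv$, an upper bound, not the lower bound you need. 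Restricting to leaves all at exactly the same depth is not available in general (there may be only boundedly many such leaves), and the compactness/continuity route would require precisely the mixture-identifiability property that is stronger than (ii) and is not established.

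The paper's resolution is the missing idea: a \emph{stretching} step (Section~\ref{sec:root-estimator}, Step 2). One artificially extends each leaf edge of $\tilde{T}^{k,s}$ to depth $h^*$ and simulates the $\mathbf{P}_t$-chain along the extension started from the observed leaf state. The resulting frequency vector $\mathbf{N}^{k,s}$ is a randomized function of $X^k_{\partial T^k}$, so by the data-processing computation \eqref{eq:loss-tv} its laws $\mathcal{M}^i_{\hat{T}^{k,s}}$ satisfy $\|\mathcal{M}^i_{\hat{T}^{k,s}}-\mathcal{M}^j_{\hat{T}^{k,s}}\|_\tv\leq\|\mathcal{L}^i_{T^k}-\mathcal{L}^j_{T^k}\|_\tv$ (still the right direction), while now every (extended) leaf sits at the common depth $h^*$, so the conditional means are exactly $\mathbf{p}^i(h^*)$ and $\mathbf{p}^j(h^*)$, which are uniformly separated by \eqref{eq:ident-tv-hstar}. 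With that fix, your concentration argument is essentially Lemma~\ref{lemma:T:Consistent} (the paper uses the weighted norm $\|\cdot\|_*$ rather than a truncation to $\Lambda_\epsilon$ to handle the countable state space, but that part of your plan could be made to work). As written, though, the separation step fails, so the proof is incomplete.
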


\subsection{Well-spread restriction}
\label{sec:root-estimator}

We will use the following construction. We extract a well-spread restriction of $T^k$ and stretch the leaf edges to enforce that all leaves are at the same distance from the root.
Fix $k \geq 1$ and $s > 0$.
Recall that $h^*$ is a (uniform) bound on the height
of the trees.
\begin{itemize}
	\item {\bf Step 1: Well-spread restriction.} 
	By Lemma~\ref{lemma:bigbang-spread}, there exists a a nested sequence of restrictions with vanishing spread.
	Let $\widetilde{T}^{k,s}$ be the restriction of $T^k$ constructed in the proof of Lemma~\ref{lemma:bigbang-spread}. Recall that $\widetilde{T}^{k,s}$ is $(1-s)$-spread and
	has $|\partial T^k(s)|$ leaves. 
	
	\item {\bf Step 2: Stretching.} We then modify $\widetilde{T}^{k,s}$ to make all leaves be at distance $h^*$ from the root as follows. For each leaf $x \in \partial \widetilde{T}^{k,s}$, we extend the corresponding leaf edge by $h^* - \ell_x$ and
	run the $\mathbf{P}_t$-chain started at $X^k_x$ for time $h^* - \ell_x$. We then let
	$\widehat{T}^{k,s}$ be the resulting tree and assign the states generated above along the extensions. 
	Observe that $\widehat{T}^{k,s}$, like $\widetilde{T}^{k,s}$, is $(1-s)$-spread and
	has $|\partial T^k(s)|$ leaves.
\end{itemize}
\noindent Let $N^{(k)}_j$ be the number of leaves of the stretched restriction $\widehat{T}^{k,s}$ that are in state $j\in\mathcal{S}$ and
let $\mathbf{N}^{k,s}=(N^{k,s}_1,N^{k,s}_2,\cdots)$.
Denote by $\mathcal{M}^i_{\widehat{T}^{k,s}}$ the law
of $\mathbf{N}^{k,s}$ when the root state is $i$.
By a computation similar to~\eqref{eq:loss-tv}, 
by the conditional independence of
$\mathbf{N}^{k,s}$ and $X^k_\rho$
given $X^k_{\partial T^k}$,
we have that
$$
\|\mathcal{M}^i_{\widehat{T}^{k,s}}
- \mathcal{M}^j_{\widehat{T}^{k,s}}\|_\tv
\leq
\|\mathcal{L}^i_{T^k} - \mathcal{L}^j_{T^k}\|_\tv.
$$
Therefore, Proposition~\ref{T:Consistent} follows
from the following lemma.
\begin{lemma}[Separation of state frequencies on stretched restrictions]
\label{lemma:T:Consistent}
Consider the setting of Proposition~\ref{T:Consistent}
and let $\mathcal{M}^i_{\widehat{T}^{k,s}}$ be as defined above.
Then,
$$
\sup_{s > 0}
\liminf_{k \to \infty}
\|\mathcal{M}^i_{\widehat{T}^{k,s}}
- \mathcal{M}^j_{\widehat{T}^{k,s}}\|_\tv
=1.
$$
\end{lemma}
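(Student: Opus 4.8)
The plan is to run a second-moment argument on the empirical distribution of leaf states, generalizing the well-spread tree analysis of~\cite{GascuelSteel:10}. Fix $i \neq j$ with $\pi(i) \wedge \pi(j) > 0$, and for fixed $s > 0$ write $m = m_{k,s} := |\partial T^k(s)|$ for the number of leaves of $\hat{T}^{k,s}$; by the big bang condition $m \to \infty$ as $k \to \infty$. Since every leaf of $\hat{T}^{k,s}$ lies at distance exactly $h^*$ from the root, each leaf has marginal law $\mathbf{p}^a(h^*)$ under root state $a$. I would set $\mathcal{A} = \{\sigma : p_{i\sigma}(h^*) \geq p_{j\sigma}(h^*)\}$, so that $\mathbf{p}^i(h^*)(\mathcal{A}) - \mathbf{p}^j(h^*)(\mathcal{A}) = \Delta$, where $\Delta := \|\mathbf{p}^i(h^*) - \mathbf{p}^j(h^*)\|_\tv > 0$ by initial-state identifiability~\eqref{eq:ident-tv} (using $0 < h^* < \infty$). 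Let $N_\mathcal{A} = \sum_{\sigma \in \mathcal{A}} N^{k,s}_\sigma$, a function of $\mathbf{N}^{k,s}$ whose conditional expectation under root state $a \in \{i, j\}$ equals $m\, \mathbf{p}^a(h^*)(\mathcal{A})$. The goal is then to show that $N_\mathcal{A}/m$ concentrates around $\mathbf{p}^i(h^*)(\mathcal{A})$ under $\P^i$ and around $\mathbf{p}^j(h^*)(\mathcal{A})$ under $\P^j$, two values separated by $\Delta$.

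The crux is a variance bound driven by the spread. For distinct leaves $x, y$, let $w$ be the point where their root-paths diverge, at distance $\ell_{xy}$ from the root. Conditionally on $X_w$, the states $X_x$ and $X_y$ are independent and each is the result of running the chain for time $h^* - \ell_{xy}$, so, writing $g(b) := \P[X_x \in \mathcal{A} \mid X_w = b] \in [0,1]$, we get
$$
\mathrm{Cov}\!\left(\mathbf{1}_{\{X_x \in \mathcal{A}\}}, \mathbf{1}_{\{X_y \in \mathcal{A}\}} \,\middle|\, X_\rho = a\right) = \var\!\left(g(X_w) \,\middle|\, X_\rho = a\right) \leq \E\!\left[(g(X_w) - g(a))^2 \,\middle|\, X_\rho = a\right] \leq 1 - p_{aa}(\ell_{xy}),
$$
since $g(X_w) = g(a)$ on $\{X_w = a\}$ and $|g| \leq 1$. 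Using the standard lower bound $p_{aa}(t) \geq e^{-q_a t}$ gives $1 - p_{aa}(\ell_{xy}) \leq q_a \ell_{xy}$, and combining with the trivial bound $\mathrm{Cov} \leq 1$ yields $\mathrm{Cov} \leq (q_a \vee 1)(\ell_{xy} \wedge 1)$. It is worth stressing that only the (finite) root rate $q_a$, $a \in \{i,j\}$, enters here --- no uniform bound on jump rates is needed, which is what makes the argument survive in the countable-state setting. Summing over the $m(m-1)$ ordered pairs of leaves, adding the $m$ diagonal terms (each at most $1/4$), and dividing by $m^2$ gives
$$
\var\!\left(N_\mathcal{A}/m \,\middle|\, X_\rho = a\right) \leq \frac{1}{4m} + (q_a \vee 1)\, \spread(\hat{T}^{k,s}) \leq \frac{1}{4m} + (q_a \vee 1)\, s,
$$
using that $\hat{T}^{k,s}$ is $(1-s)$-spread.

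To conclude, I would take $\theta := \tfrac{1}{2}\big(\mathbf{p}^i(h^*)(\mathcal{A}) + \mathbf{p}^j(h^*)(\mathcal{A})\big)$, which lies at distance $\Delta/2$ from each conditional mean, and the measurable set $C := \{\mathbf{n} : \sum_{\sigma \in \mathcal{A}} n_\sigma > \theta m\}$ in the range of $\mathbf{N}^{k,s}$. Chebyshev's inequality applied under $\P^i$ and under $\P^j$ respectively gives $\mathcal{M}^i_{\hat{T}^{k,s}}(C) \geq 1 - \tfrac{4}{\Delta^2}\big(\tfrac{1}{4m} + (q_i \vee 1)s\big)$ and $\mathcal{M}^j_{\hat{T}^{k,s}}(C) \leq \tfrac{4}{\Delta^2}\big(\tfrac{1}{4m} + (q_j \vee 1)s\big)$, hence
$$
\|\mathcal{M}^i_{\hat{T}^{k,s}} - \mathcal{M}^j_{\hat{T}^{k,s}}\|_\tv \geq \mathcal{M}^i_{\hat{T}^{k,s}}(C) - \mathcal{M}^j_{\hat{T}^{k,s}}(C) \geq 1 - \frac{4}{\Delta^2}\left(\frac{1}{2m} + \big((q_i \vee 1) + (q_j \vee 1)\big)s\right).
$$
Letting $k \to \infty$ so that $m = m_{k,s} \to \infty$, and then letting $s \downarrow 0$, the right-hand side tends to $1$; since the total variation distance is at most $1$, this yields $\sup_{s > 0} \liminf_{k \to \infty} \|\mathcal{M}^i_{\hat{T}^{k,s}} - \mathcal{M}^j_{\hat{T}^{k,s}}\|_\tv = 1$. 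The main obstacle is the covariance estimate: one must recognize that conditional independence at the divergence point, combined with the no-jump bound $p_{aa}(t) \geq e^{-q_a t}$, converts a small shared path length into a small covariance while invoking only the finite root rates; everything after that is routine second-moment bookkeeping.
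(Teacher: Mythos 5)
Your proof is correct, and it rests on the same skeleton as the paper's argument: the stretched, $(1-s)$-spread restriction $\hat{T}^{k,s}$, a covariance bound obtained by conditioning at the divergence point of two leaf paths (controlled by the root rate and the shared path length, hence by the spread), Chebyshev's inequality, and the order of limits $k \to \infty$ then $s \downarrow 0$. Where you genuinely differ is in the test statistic and event: the paper compares the full empirical frequency vector $\mathbf{N}^{k,s}/|\partial T^k(s)|$ to the row $\mathbf{p}^{i}(h^*)$ in the weighted norm $\|\mathbf{v}\|_* = \sum_i 2^{-i}|v_i|$, first checking that initial-state identifiability in total variation transfers to $\|\cdot\|_*$, and then applies its per-state variance bound (Lemma~\ref{L:var_nj}) coordinatewise together with Cauchy--Schwarz; you instead reduce to the scalar statistic $N_{\mathcal{A}}/m$ for a single set $\mathcal{A}$ achieving $\|\mathbf{p}^{i}(h^*)-\mathbf{p}^{j}(h^*)\|_\tv$, and prove the covariance bound directly for the aggregated indicator (your conditioning argument is the same as in the proof of Lemma~\ref{L:var_nj}, just for a set rather than a singleton state, and remains valid on a countable state space since it only invokes the finite root rates $q_i, q_j$). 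Your route avoids both the auxiliary norm and the norm-equivalence step and yields a quantitative bound directly in terms of the total variation gap $\Delta$, in the spirit of the events $\mathcal{A}_{i \to i'}$ that the paper introduces later in Section~\ref{section:rate} for the error bounds; the paper's route keeps the per-state variance lemma as a reusable black box and the $\|\cdot\|_*$-ball event as an explicit estimator of the entire row $\mathbf{p}^{i}(h^*)$. Both yield the same $O(1/m)+O(s)$ Chebyshev error and hence the same conclusion.
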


When all leaves of $T^k$ are assumed to be at the same distance from the root,
$T^k$ is said to be ultrametric (see e.g.~\cite[Chapter 7]{SempleSteel:03}). 
Here
we do not make this assumption on $T^k$. Instead we enforce it
artificially through the stretching in Step 2. The reason we do this is that our proof relies on initial-state identifiability which, by~\eqref{eq:ident-tv}, implies
\begin{equation}
\label{eq:ident-tv-hstar}
\|
\mathbf{p}^{i}(h^*) - \mathbf{p}^{j}(h^*)\|_\tv > 0,
\qquad \forall i\neq j \in \mathcal{S}.
\end{equation}
 In contrast, it may not be the case that the expected state frequencies at $\partial \widetilde{T}^{k,s}$, that is,
$$
\frac{1}{|\partial T^k(s)|}\sum_{x \in \partial \widetilde{T}^{k,s}} \mathbf{p}^i(\ell_x),
$$
uniquely characterize the root state $i$.

\subsection{Variance bound}

The proof of Lemma~\ref{lemma:T:Consistent} relies on the following variance bound, which generalizes a result of~\cite[Proof of Lemma 3.2]{GascuelSteel:10}. Recall the definition of $q_i$ in~\eqref{eq:def-qi}. 
\begin{lemma}[Variance bound]\label{L:var_nj}
Let $T = (V,E,\rho,\ell)$ be a tree and
let $(X_\gamma)_{\gamma \in \Gamma_T}$ be a $\mathbf{P}_t$-chain on $T$.
Let $N_j$ be the number of leaves of $T$ in state $j\in \mathcal{S}$. Then for all $i,j\in \mathcal{S}$,
	\begin{align}
	\var_{i}(N_j) &\leq \frac{1}{4} |\partial T| +2 (q_{i}\lor 1)\,\spread(T)\,|\partial T|^2, \label{var_nj}
	\end{align}
where we denote by $\var_i$ the variance under $\P^i$. 
\end{lemma}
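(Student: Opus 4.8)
The plan is to expand the variance of $N_j = \sum_{x \in \partial T} \mathbbm{1}_{\{X_x = j\}}$ as a double sum over ordered pairs of leaves and control the covariance between two leaf indicators in terms of how much of the root-to-leaf paths they share. First I would write
$$
\var_i(N_j) = \sum_{x \in \partial T} \var_i\big(\mathbbm{1}_{\{X_x=j\}}\big) + \sum_{x \neq y} \cov_i\big(\mathbbm{1}_{\{X_x=j\}},\,\mathbbm{1}_{\{X_y=j\}}\big).
$$
The diagonal terms are each at most $1/4$ (variance of a Bernoulli), giving the $\tfrac14 |\partial T|$ term. For the off-diagonal terms, fix $x \neq y$ and let $w$ be the point where the paths to $x$ and $y$ diverge, so $\ell_w = \ell_{xy}$. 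Conditioning on $X_w$, the states $X_x$ and $X_y$ become independent, so
$$
\cov_i\big(\mathbbm{1}_{\{X_x=j\}},\,\mathbbm{1}_{\{X_y=j\}}\big)
= \cov_i\big(p_{X_w,j}(\ell_x - \ell_{xy}),\, p_{X_w,j}(\ell_y - \ell_{xy})\big) + \E_i\Big[\cov\big(\cdots \mid X_w\big)\Big],
$$
and the inner conditional covariance vanishes by conditional independence; hence the covariance equals a covariance (under the law of $X_w$ given $X_\rho = i$) of two bounded functions of $X_w$.

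The key estimate will be to bound this covariance by something proportional to $\ell_{xy} \wedge 1$ times $(q_i \vee 1)$. The natural tool is a coupling / continuity estimate for the Markov chain run for a short time: starting the chain at $X_\rho = i$ and running it for time $\ell_{xy}$, the probability that it has left state $i$ is at most $1 - e^{-q_i \ell_{xy}} \le q_i\,\ell_{xy}$, and more generally one can bound the total variation distance between $\mathbf{p}^i(\ell_{xy})$ and the point mass $\delta_i$ by $q_i (\ell_{xy} \wedge 1)$ up to constants. Since a covariance of functions bounded in $[0,1]$ is at most the probability that the conditioning variable $X_w$ differs from its value under a ``no-jump'' coupling — i.e.\ at most $\P_i[X_w \neq i] \le q_i \ell_{xy}$ — each off-diagonal term is bounded by a constant times $(q_i \vee 1)(\ell_{xy} \wedge 1)$. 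Summing over the $|\partial T|(|\partial T|-1) \le |\partial T|^2$ ordered pairs and recalling the definition $\spread(T) = \frac{\sum_{x \neq y}(\ell_{xy}\wedge 1)}{|\partial T|(|\partial T|-1)}$ yields the term $2(q_i \vee 1)\spread(T)|\partial T|^2$, after checking the constant is at most $2$.

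The main obstacle is pinning down the covariance bound with the right constant, in the countable-state and possibly large-$q_i$ regime. One has to be careful that $p_{ii}(t) \ge e^{-q_i t}$ (which follows from the chain staying at $i$ with no jump) and that this gives $\|\mathbf{p}^i(t) - \delta_i\|_{\tv} = 1 - p_{ii}(t) \le 1 - e^{-q_i t} \le (q_i t) \wedge 1 \le (q_i \vee 1)(t \wedge 1)$; then a short argument shows that for functions $g,h:\mathcal{S}\to[0,1]$, $|\cov(g(X_w),h(X_w))|$ under the law of $X_w$ given root $i$ is controlled by $2\|\mathbf{p}^i(\ell_{xy}) - \delta_i\|_{\tv}$, which is where the factor $2$ enters. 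Everything else is bookkeeping: reindexing the double sum, using $\ell_{xy} \wedge 1 \le 1$, and collecting terms to match the stated bound.
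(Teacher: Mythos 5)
Your proposal is correct and takes essentially the same route as the paper: the same decomposition of $\var_i(N_j)$ into diagonal terms (each at most $1/4$) plus covariances, the same conditioning on the state at the divergence point of the two root-to-leaf paths, a bound of each covariance by $2[(q_i\ell_{xy})\wedge 1]$ via the no-jump probability $e^{-q_i \ell_{xy}}$, and then summation using the definition of $\spread(T)$. The only cosmetic difference is how the constant $2$ arises: the paper writes the covariance as an explicit sum over the intermediate state $k$, splits off the $k=i$ term and controls it with a Chapman--Kolmogorov continuity estimate, while you invoke the generic bound $|\cov(g(X_w),h(X_w))|\le 2\,\P^i[X_w\neq i]$ for $[0,1]$-valued $g,h$ — both give the same per-pair bound.
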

\begin{proof}
	Let $\theta^j_x$ be the indicator random variable for the event ``leaf $x$ is in state $j$.'' Then 
    $$
    N_j=\sum_{x\in \partial T}\theta^j_x,
    $$ 
    and, hence,
	\begin{align}
	\var_{i}(N_j)&=\sum_{x}\var_i(\theta^j_x) + \sum_{x\neq y}\cov_i(\theta^j_x,\;\theta^j_y).\label{var_nj2}
	\end{align}
	Because $\theta^j_x\in \{0,1\}$, we have 
    $$
    \var_i(\theta^j_x) = \P^i[\theta^j_x = 1] (1-\P^i[\theta^j_x = 1]) \leq 1/4,
    $$ 
leading to the first term on the RHS of~\eqref{var_nj}.
For $x\neq y$, we have that
	\begin{align*}
	 \cov_i(\theta^{j}_x,\;\theta^{j}_y)
	 &=\E^i\big[\big(\theta^{j}_x-p_{ij}(\ell_x)\big)\,\big(\theta^{j}_y-p_{ij}(\ell_y)\big)\big] \\
	 &=\sum_{k\in \mathcal{S}}p_{ik}(\ell_{xy})\,\big(p_{kj}(\ell_x-\ell_{xy})-p_{ij}(\ell_x)\big)\,\big(p_{kj}(\ell_y-\ell_{xy})-p_{ij}(\ell_y)\big),
	\end{align*}
	which is obtained by conditioning on the state 
    at the divergence point between the paths from the root to $x$ and $y$. Splitting the sum according to whether $k=i$, we have 
	\begin{align}	|\cov_i(\theta^{j}_x,\,\theta^{j}_y)|
 	&\leq\left|p_{ij}(\ell_x-\ell_{xy})-p_{ij}(\ell_x)\right| +  \sum_{k\neq i}p_{ik}(\ell_{xy}) \label{|Cov|1}\\    
	&\leq 2 [(q_{i} \ell_{xy} )\land 1].\label{|Cov|2}
	\end{align}
	To see inequality~\eqref{|Cov|2}, note that the second term on the RHS  of \eqref{|Cov|1} is bounded above by the probability that
	the state is changed at least once along the shared path from the root to $x$ and $y$, which is equal to $1-\exp{(- q_{i}\ell_{xy})} \leq (q_{i}\ell_{xy}) \land 1$ (see e.g.~\cite[Chapter 2]{Liggett:10}). For the first term, 
    the Chapman-Kolmogorov equations (see e.g.~\cite[Chapter 2]{Liggett:10}) imply that, for all $t \geq 0$ and $\delta > 0$, 
$$
p_{ij}(t + \delta) - p_{ij}(t)
=
\sum_{k} p_{ik}(\delta) p_{kj} (t) 
- p_{ij}(t)$$ 
so that
\begin{align*}
p_{ij}(t + \delta)
- p_{ij}(t)
\leq \sum_{k\neq i} p_{ik} (\delta)
= 1 - p_{ii}(\delta)
\leq 1 - \exp(-q_i \delta),
\end{align*}
and
\begin{align*}
p_{ij}(t + \delta)
- p_{ij}(t)
\geq -(1-p_{ii}(\delta)) p_{ij}(t)
\geq - (1 - \exp(-q_i \delta)).
\end{align*}
The proof is complete in view of \eqref{var_nj2}
and the definition of the spread.
\end{proof}

\subsection{Proof of Lemma~\ref{lemma:T:Consistent}}

\begin{proof}[Proof of Lemma~\ref{lemma:T:Consistent}]
It suffices to find a sequence of events
$\mathcal{A}_k$, $k \geq 1$, depending only on $\mathbf{N}^{k,s}$ such that
$$
\sup_{s > 0}\liminf_{k \to \infty}
\P^i[\mathcal{A}^{k,s}]
= 1
\qquad\text{and}\qquad
\inf_{s > 0}
\limsup_{k \to \infty} 
\P^j[\mathcal{A}^{k,s}]
=0,
$$
that is,
a sequence of events asymptotically
likely under $\mathcal{M}^i_{\widehat{T}^{k,s}}$ but unlikely under $\mathcal{M}^j_{\widehat{T}^{k,s}}$.

Consider the norm $\|\cdot\|_*$ defined as 
\begin{equation*} 
\|\mathbf{v}\|_{*}:=\sum_{i=1}^{|\mathcal{S}|}2^{-i}|v_i|,
\end{equation*} 
for $\mathbf{v} = (v_1, v_2, \ldots)$.
We claim that~\eqref{eq:ident-tv-hstar} is equivalent to
\begin{equation}
\Delta^*_{i,j}
:=
\|
\mathbf{p}^{i}(h^*) - \mathbf{p}^{j}(h^*)\|_* > 0.\label{eq:ident-star}
\end{equation}
Indeed, by the definition of the norms, we have $\|\cdot\|_{*}\leq \|\cdot\|_\tv$. For the other direction, note that, for any $\delta>0$, there exists $M$ such that $\sum_{k > M}2^{-k}<\delta/2$ and so $\|\mu-\nu\|_\tv \leq \delta/2+ 2^{M}\|\mu-\nu\|_{*}$ for any probability distributions $\mu$ and $\nu$.
We consider the following events
$$
\mathcal{A}^{k,s}
=
\left\{
\left\|
\frac{\mathbf{N}^{k,s}}{|\partial T^k(s)|} - \mathbf{p}^{i}(h^*)
\right\|_{*}
<
\frac{\Delta^*_{i,j}}{2}
\right\}.
$$

Because $\widehat{T}^{k,s}$ is $(1-s)$-spread, the variance bound in Lemma~\ref{L:var_nj} implies that for $i,j \in \mathcal{S}$
\begin{align}
\var_{i}\left(N^{k,s}_j\right) 
&\leq \frac{|\partial T^k(s)|}{4}+2\,(q_{i}\lor 1)\,s\,|\partial T^k(s)|^2,\label{var_nj3}
\end{align}
By the Cauchy-Schwarz inequality and \eqref{var_nj3}, 
\begin{align}
\E^i\left[
\left\|\frac{\mathbf{N}^{k,s}}{|\partial T^k(s)|} - \mathbf{p}^{i}(h^*) \right\|^2_{*}
\right]
&=
\E^i\left[\left(\sum_{j=1}^{|\mathcal{S}|}2^{-j}\left|\frac{N^{k,s}_j}{|\partial T^k(s)|}-p_{ij}(h^*)\right|\right)^2\right] \notag \\
&\leq
\left(\sum_{j=1}^{|\mathcal{S}|}2^{-j}\right)
\left(
\sum_{j=1}^{|\mathcal{S}|}2^{-j}\var_i\left(\frac{N^{k,s}_j}{|\partial T^k(s)|}\right)
\right)
\notag\\
&\leq\frac{1}{4|\partial T^k(s)|}+2 (q_{i}\lor 1)s.\label{Lem3.2} 
\end{align}
By Chebyshev's inequality (see e.g.~\cite{Durrett:10}), 
\begin{align}
\P^i\left[
\left\|
\frac{\mathbf{N}^{k,s}}{|\partial T^k(s)|} - \mathbf{p}^{i}(h^*)
\right\|_* 
\geq  
\frac{\Delta^*_{i,j}}{2}\right]
&\leq \frac{4}{(\Delta^*_{i,j})^2}\,\E_i\left[
\left\|\frac{\mathbf{N}^{k,s}}{|\partial T^k(s)|} - \mathbf{p}^{i}(h^*) \right\|^2_{*}
\right]\nonumber\\
&\leq  
\frac{4}{(\Delta^*_{i,j})^2}\left[
\frac{1}{4|\partial T^k(s)|}+2 (q_{i}\lor 1)s
\right],\label{Chebyshev}
\end{align}
where we used~\eqref{Lem3.2}.
By the big bang condition and~\eqref{eq:ident-star}, taking $k \to +\infty$ and then $s \to 0$, we get
$$
\inf_{s > 0}
\limsup_{k \to \infty}
\P^i[(\mathcal{A}^{k,s})^c]
= 0.
$$

Similarly, noting that by the triangle inequality and
the definition of $\Delta^*_{i,j}$,
$$
\P^j\left[
\left\|
\frac{\mathbf{N}^{k,s}}{|\partial T^k(s)|} - \mathbf{p}^{i}(h^*)
\right\|_* 
< 
\frac{\Delta^*_{i,j}}{2}\right]
\leq
\P^j\left[
\left\|
\frac{\mathbf{N}^{k,s}}{|\partial T^k(s)|} - \mathbf{p}^{j}(h^*)
\right\|_* 
\geq  
\frac{\Delta^*_{i,j}}{2}\right],
$$ 
we also get that
$$
\inf_{s > 0}
\limsup_{k \to \infty}
\P^j[\mathcal{A}^{k,s}]
= 0.
$$
The proof is complete. 
\end{proof}

\section{Error bounds}
\label{section:rate}

The proof of Lemma~\ref{lemma:T:Consistent} actually implies an explicit
bound on the error probability (see~\eqref{Chebyshev}). 
That bound decays like the inverse of $|\partial T^k(s)|$. This is far from best possible: take for instance the star tree where, by conditional independence of the leaf states given the root state, one would expect an exponential inequality. Here we give an improved 
bound on the achievable error probability which
decays exponentially in $|\partial T^k(s)|$. We also
express this bound in terms of the more natural total variation distance. 

Our main result is the following
proposition, which implies the first part of Theorem~\ref{thm:2}.
(The second part of the theorem is proved in Section~\ref{sec:uniform}.)
For $\epsilon > 0$, recall that $n_\epsilon < \infty$ be the smallest integer such that $\sum_{i> n_\epsilon}\pi(i) <\epsilon$ and that 
$
\Lambda_\epsilon = \{i \in \mathcal{S}\,:\, i \leq n_\epsilon\},
$
$$q^*_\epsilon = \max_{i \in \Lambda_\epsilon}\, (q_i \lor 1),$$
and
$$
\Delta_{\epsilon}
=
\min_{i_1 \neq i_2 \in \Lambda_{\epsilon}}
\|
\mathbf{p}^{i_1}(h^*) 
- 
\mathbf{p}^{i_2}(h^*)
\|_\tv.
$$
\begin{prop}[Achievable error bound]
\label{prop:conv-rate}
Fix $\epsilon > 0$ and $k \geq 1$. Then there exist universal constants $C_0, C_1 > 0$ and an estimator $F_k$
such that the following holds.
For all $s > 0$,
\begin{equation*}
\P^{\pi}
\left[
F_k(X^k_{\partial T^{k}})
\neq 
X^k_{\rho}
\right]
< \epsilon
+
C_0\,  \Delta_\epsilon^{-2}\, q^*_\epsilon\, s
+
n_\epsilon \exp
\left(
-  C_1\, \Delta_\epsilon^2\, |\partial T^k(s)|
\right).
\end{equation*}
\end{prop}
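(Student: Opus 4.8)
The plan is to reduce first to the Bayes error. Since the maximum a posteriori estimator over $\mathcal S$ minimizes the error probability among all estimators — including randomized ones, whose error is an average over deterministic estimators (cf.\ the first part of Lemma~\ref{lemma:info-facts}) — it suffices to produce, for each fixed $s>0$, \emph{some} auxiliary estimator (allowed to depend on $s$ and to use extra randomness) whose error is bounded by the right-hand side of~\eqref{eq:thm:2:gen}, and then to take $F_k$ to be the MAP estimator, which does not depend on $s$. As the auxiliary estimator I would use a Scheff\'e-type tournament on the stretched well-spread restriction $\hat T^{k,s}$ of Section~\ref{sec:root-estimator}, which has $m:=|\partial T^k(s)|$ leaves $x_1,\dots,x_m$, all at distance $h^*$ from $\rho$. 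Writing $p_i(A,t):=\sum_{b\in A}p_{ib}(t)$, for each pair $i\neq i'$ in $\Lambda_\epsilon$ fix a set $A_{ii'}\subseteq\mathcal S$ with $p_i(A_{ii'},h^*)-p_{i'}(A_{ii'},h^*)=\|\mathbf p^{i}(h^*)-\mathbf p^{i'}(h^*)\|_\tv\ge\Delta_\epsilon$ (e.g.\ $A_{ii'}=\{b:\,p_{ib}(h^*)\ge p_{i'b}(h^*)\}$, with ties broken by index); then, letting $\hat{\mathbf p}=\mathbf N^{k,s}/m$, declare $i$ the winner of the match $\{i,i'\}$ if $\hat{\mathbf p}(A_{ii'})>\tfrac12(p_i(A_{ii'},h^*)+p_{i'}(A_{ii'},h^*))$ and $i'$ otherwise, and output the unique state in $\Lambda_\epsilon$ that wins all of its matches (defaulting to $1$ if none).

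Next I would decompose the error: $\P^{\pi}[\,\text{output}\neq X^k_\rho\,]\le\sum_{i\notin\Lambda_\epsilon}\pi(i)+\max_{i\in\Lambda_\epsilon}\P^i[\text{output}\neq i]<\epsilon+\max_{i\in\Lambda_\epsilon}\P^i[\text{output}\neq i]$, using the definition of $n_\epsilon$. Under $\P^i$ the output differs from $i$ only if $i$ loses some match $\{i,i'\}$, and since the midpoint is at distance $\ge\Delta_\epsilon/2$ below $p_i(A_{ii'},h^*)$ this forces $\hat{\mathbf p}(A_{ii'})\le p_i(A_{ii'},h^*)-\Delta_\epsilon/2$. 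So everything reduces, after a union bound over the $n_\epsilon-1$ opponents, to a one-sided deviation estimate for $N_A/m$ (with $N_A$ the number of leaves of $\hat T^{k,s}$ in a \emph{fixed} set $A$) below its mean $p_i(A,h^*)$ under $\P^i$.

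The heart of the matter — and the step I expect to be the main obstacle — is obtaining an \emph{exponential} bound here (the proof of Lemma~\ref{lemma:T:Consistent} only gives a polynomial one) despite the dependence among the leaves. The plan is to condition at depth $\approx s$: for each leaf $x_\alpha$ of $\hat T^{k,s}$ let $\delta_\alpha=\min(s,\ell_{x_\alpha})$, $Z_\alpha=X_{w_\alpha}$ with $w_\alpha$ the point at distance $\delta_\alpha$ from $\rho$ on the path to $x_\alpha$, and $\mathbf Z=(Z_\alpha)_\alpha$. By construction the leaves of $\hat T^{k,s}$ pairwise diverge at depth $<s$, so the parts of $\hat T^{k,s}$ (together with the stretching extensions) below the $w_\alpha$'s are disjoint; hence conditionally on $\mathbf Z$ the states $X_{x_\alpha}$ are independent, $X_{x_\alpha}\sim\mathbf p^{Z_\alpha}(h^*-\delta_\alpha)$, and $N_A$ is a conditional sum of $m$ independent indicators. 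Hoeffding's inequality then gives $\P^i[N_A/m\le \E^i[N_A/m\mid\mathbf Z]-t\mid\mathbf Z]\le e^{-2mt^2}$. To pin the random conditional mean $\E^i[N_A/m\mid\mathbf Z]=\tfrac1m\sum_\alpha p_{Z_\alpha}(A,h^*-\delta_\alpha)$, I would use the Chapman--Kolmogorov estimate $|p_i(A,h^*-\delta_\alpha)-p_i(A,h^*)|\le 1-p_{ii}(\delta_\alpha)\le q_i\delta_\alpha\le q_i s$ together with the event $E:=\{\#\{\alpha:Z_\alpha\neq i\}<(\Delta_\epsilon/8)m\}$, which is $\mathbf Z$-measurable; on $E$ one gets $|\E^i[N_A/m\mid\mathbf Z]-p_i(A,h^*)|\le q_is+\Delta_\epsilon/8<\Delta_\epsilon/4$, so (taking $t=\Delta_\epsilon/4$) the deviation probability on $E$ is at most $e^{-m\Delta_\epsilon^2/8}$; here we may assume $q^*_\epsilon s<\Delta_\epsilon/8$ since, as $\Delta_\epsilon\le 1$, the bound holds trivially for $C_0\ge 8$ otherwise. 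For $E^c$, since $\E^i[\#\{\alpha:Z_\alpha\neq i\}]=\sum_\alpha(1-p_{ii}(\delta_\alpha))\le mq_is$, Markov's inequality gives $\P^i[E^c]\le 8q_is/\Delta_\epsilon\le 8q^*_\epsilon s/\Delta_\epsilon^2$. Crucially this last, non-exponential term is the \emph{same} for every opponent, so the union bound over opponents multiplies only the exponential piece, yielding $\P^i[\text{output}\neq i]\le 8q^*_\epsilon s/\Delta_\epsilon^2+n_\epsilon e^{-m\Delta_\epsilon^2/8}$ and hence the claim with $C_0=8$, $C_1=1/8$. Finally, the auxiliary estimator is randomized only through the stretching; this is harmless, since its error is an average of errors of deterministic estimators and the MAP estimator beats each of them, so the bound transfers to the $s$-independent estimator $F_k$ for every $s>0$.
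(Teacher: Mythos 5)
Your proposal is correct and follows essentially the same route as the paper: extract the stretched well-spread restriction $\hat T^{k,s}$, run pairwise tests based on sets achieving the total variation distance $\Delta_\epsilon$ with a union bound over $\Lambda_\epsilon$, and get concentration by conditioning on the states near the root (at depth about $s$), which makes the leaf states conditionally independent so that Hoeffding yields the exponential term while a first/second-moment bound on how many of those near-root states differ from the root state yields the $O(q^*_\epsilon s)$ term. Your only deviations are cosmetic — Markov's inequality on the number of changed ancestors in place of the paper's Chebyshev bound on $S_i$, a midpoint (Scheff\'e) threshold in place of the paper's offset threshold, and the reduction to the MAP estimator, which neatly makes $F_k$ independent of $s$ (a point the paper's $s$-dependent estimator $G_k^{\Lambda_\epsilon}$ leaves implicit).
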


\subsection{Deviation of frequencies}
\label{sec:bound-deviation}

To prove Proposition~\ref{prop:conv-rate}, we 
devise a root estimator (described in details in the next subsection) based on the combinatorial construction of Section~\ref{sec:root-estimator}. 
Fix $k \geq 1$ and $s > 0$.
Given the leaf states $X^k_{\partial T^k}\in \mathcal{S}^{\partial T^{k}}$ of the original tree $T^k$, we extract the subtree $\widetilde{T}^{k,s}$, run a simulation of the $\mathbf{P}_t$-chain on the extended tree $\widehat{T}^{k,s}$, and treat the 
leaf states of $\widehat{T}^{k,s}$ as the observed leaf states. 
For a subset $\mathcal{A} \subseteq \mathcal{S}$, let $N^{k,s}_\mathcal{A}$ be the number of leaves of $\widehat{T}^{k,s}$ whose state is in $\mathcal{A}$.
The proof of Proposition~\ref{prop:conv-rate} requires a bound on the deviation of $N^{k,s}_\mathcal{A}$. To obtain such a bound, we proceed by first controlling the number of points in $\partial T^k(s)$ whose state coincides with the root state.

Let $i$ be state at the root.
For any vertex $v$, let $Z_v$ be $1$ if the state at $v$ is $i$, and let $Z_v$ be $0$ otherwise.
Let $\mathcal{W}_i$ be those vertices in $\partial T^k(s)$
in state $i$. In particular 
$$
S_i = |\mathcal{W}_i| = \sum_{x \in \partial T^k(s)} Z_x.
$$
Let $\widehat{N}_\mathcal{A}$ be the number of descendant leaves of $\mathcal{W}_i$ in $\widehat{T}^{k,s}$ whose states are in $\mathcal{A}$. 
We also let $m = |\partial T^k(s)|$.
Then, we can bound $N^{k,s}_\mathcal{A}$ as follows
\begin{equation}
\widehat{N}_\mathcal{A}
\leq
N^{k,s}_\mathcal{A} 
\leq  \widehat{N}_\mathcal{A} + m-S_i.\label{eq:approx-nkj}
\end{equation}
Conditioned on $S_i$, note
that $\widehat{N}_\mathcal{A}$ is a binomial random variable, specifically, $\mathrm{Bin}(S_i,\,p_{i\mathcal{A}}(h^*-s))$, where $p_{i\mathcal{A}}(t)$ denotes the probability that the state is in $\mathcal{A}$ given that initially it is $i$.
To bound the probability that
$N^{k,s}_\mathcal{A}$ is close to its expectation, we
argue in two steps. We first bound the probability
that $S_i$ itself is close to its expectation, then we apply a concentration inequality to $N^{k,s}_\mathcal{A}$ conditioned on that event.
\begin{lemma}[Control of $S_i$]
\label{lem:control-si}
Define the event
$$
\mathcal{E}^0_\delta 
=
\left\{
\left|
S_i-\E^i[S_i] 
\right|
>  
\delta m
\right\},
$$
where
\begin{align}\label{eq:e-si}
\E^i[S_i] 
= p_{ii}(s)\,m.
\end{align}
Then, we have the bound
\begin{align}
\P^i
\left(
\mathcal{E}^0_\delta 
\right) 
&\leq 
\frac{1- e^{-q_i s}}{\delta^2}.\label{eq:error-0}
\end{align}
\end{lemma}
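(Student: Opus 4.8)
The plan is to bound $\P^i(\mathcal{E}^0_\delta)$ by Chebyshev's inequality, so the entire task reduces to controlling $\var_i(S_i)$. Recall that $S_i = \sum_{x \in \partial T^k(s)} Z_x$, that each point of $\partial T^k(s)$ lies at distance $s$ from $\rho$, and hence that $\E^i[S_i] = p_{ii}(s)\,m$ as recorded in~\eqref{eq:e-si}. One could try to expand $\var_i(S_i)$ into variances and covariances of the $Z_x$ and bound the covariances by conditioning on divergence points, exactly as in the proof of Lemma~\ref{L:var_nj}; but that route produces a bound involving $\spread(T^k(s))$, and here we have no control whatsoever over the spread of the truncation's leaves. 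So instead I would use a much cruder variance bound, which is all we need.

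The idea is to pass to the complementary count $U := m - S_i = \sum_{x \in \partial T^k(s)} (1 - Z_x)$, the number of points of $\partial T^k(s)$ whose state differs from the root state $i$. Since $0 \leq U \leq m$ pointwise, we have $U^2 \leq m\,U$, and therefore
\begin{equation*}
\var_i(S_i) = \var_i(U) \leq \E^i[U^2] \leq m\,\E^i[U] = m^2\,\bigl(1 - p_{ii}(s)\bigr).
\end{equation*}
Bounding $p_{ii}(s)$ below by the probability that the chain makes no jump during $[0,s]$, namely $p_{ii}(s) \geq e^{-q_i s}$ (see e.g.~\cite[Chapter 2]{Liggett:10}), gives $\var_i(S_i) \leq m^2\,(1 - e^{-q_i s})$.

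Chebyshev's inequality then yields
\begin{equation*}
\P^i(\mathcal{E}^0_\delta) = \P^i\!\bigl(|S_i - \E^i[S_i]| > \delta m\bigr) \leq \frac{\var_i(S_i)}{\delta^2 m^2} \leq \frac{1 - e^{-q_i s}}{\delta^2},
\end{equation*}
which is precisely~\eqref{eq:error-0}. There is no genuine obstacle in this argument; the only point worth flagging is the decision to settle for a variance bound that is quadratic in $m$ rather than chasing the sharper $O(m)$-type estimate, since without a spread hypothesis on $\partial T^k(s)$ the covariances between the $Z_x$ cannot be controlled. The extra factor $m^2$ is harmless because it is exactly cancelled by the $\delta^2 m^2$ coming from Chebyshev, and the resulting $m$-independent bound is still useful since $1 - e^{-q_i s} \to 0$ as $s \downarrow 0$.
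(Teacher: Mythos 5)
Your proposal is correct and follows essentially the same route as the paper: Chebyshev's inequality applied to $S_i$ together with the crude variance bound $\var_i(S_i) \leq m^2\bigl(1-e^{-q_i s}\bigr)$ and the jump-probability estimate $p_{ii}(s) \geq e^{-q_i s}$. The only cosmetic difference is how that variance bound is obtained---the paper bounds each covariance $\cov_i(Z_x,Z_y)$ by Cauchy--Schwarz as $p_{ii}(s)(1-p_{ii}(s))$ and sums over the $m^2$ pairs, whereas you use the pointwise inequality $U^2 \leq mU$ for the complementary count $U = m - S_i$; the two derivations are interchangeable and yield the same bound.
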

\begin{proof}
We use Chebyshev's inequality to control the deviation of $S_i$. 
By the Cauchy-Schwarz inequality,
the variance of $S_i$ is bounded by
\begin{align*}
\var_i[S_i]
&= \var_i\left[\sum_{x \in \partial T^k(s)} Z_x\right]\\
&= \sum_{x \in \partial T^k(s)}\sum_{y \in \partial T^k(s)} \E^i\left[(Z_x - p_{ii}(s))(Z_y - p_{ii}(s))\right]\\
&\leq \sum_{x \in \partial T^k(s)}\sum_{y \in \partial T^k(s)} \sqrt{\var_i[Z_x]\var_i[Z_y]}\\
&= m^2 p_{ii}(s)(1-p_{ii}(s))\\
&\leq m^2 (1- e^{-q_i s}),
\end{align*}
where on the last line we used that 
the probability of being at state $i$ at
time $s$ is at least the probability of 
never having left state $i$ up to time $s$,
i.e., $e^{-q_i s} \leq p_{ii}(s) \leq 1$ (see e.g.~\cite[Chapter 2]{Liggett:10}).
The result by Chebyshev's inequality.
\end{proof}
\begin{lemma}[$N^{k,s}_\mathcal{A}$ is close to its expectation given $\mathcal{E}^0_\delta $]\label{lem:nkj}
Fix a subset $\mathcal{A} \subseteq \mathcal{S}$.
Let $\delta > 0$. Then, the following bound holds
$$
\P^i\left[
N_\mathcal{A}^{k,s}
<
p_{i\mathcal{A}}(h^*)\,m
- 
[(1- e^{-q_i s})  + 2 \delta] m
\,\middle|\,
\mathcal{E}^0_\delta 
\right]
\leq
\exp
\left(
-  \frac{2 \delta^2}{1+\delta} m
\right).
$$
\end{lemma}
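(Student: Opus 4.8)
The plan is to condition throughout on the event $\{|S_i-\E^i[S_i]|\le \delta m\}$ (the complement of $\mathcal{E}^0_\delta$ in the notation of Lemma~\ref{lem:control-si}; this is how I read the conditioning event, since conditioning on the low‑probability event $\mathcal{E}^0_\delta$ itself cannot yield concentration), and to lower bound $N^{k,s}_\mathcal{A}$ using the binomial structure recorded just before~\eqref{eq:approx-nkj}. By the left‑hand inequality in~\eqref{eq:approx-nkj} we have $N^{k,s}_\mathcal{A}\ge \widehat N_\mathcal{A}$, so it suffices to prove the stated tail bound with $\widehat N_\mathcal{A}$ in place of $N^{k,s}_\mathcal{A}$. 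Since, conditioned on $S_i$, $\widehat N_\mathcal{A}\sim \mathrm{Bin}\!\left(S_i,\,p_{i\mathcal{A}}(h^*-s)\right)$, I will (i) lower bound the conditional mean $\E^i[\widehat N_\mathcal{A}\mid S_i]=S_i\,p_{i\mathcal{A}}(h^*-s)$ on the good event, and (ii) apply a lower‑tail Chernoff/Hoeffding inequality to this conditional binomial, then integrate over the value of $S_i$.

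\textbf{Lower bounding the conditional mean.} By the Chapman--Kolmogorov equations,
$$
p_{i\mathcal{A}}(h^*)=\sum_{j\in\mathcal{S}}p_{ij}(s)\,p_{j\mathcal{A}}(h^*-s)\le p_{ii}(s)\,p_{i\mathcal{A}}(h^*-s)+\bigl(1-p_{ii}(s)\bigr),
$$
and using $p_{ii}(s)\ge e^{-q_i s}$ (as in the proof of Lemma~\ref{lem:control-si}; see~\cite[Chapter 2]{Liggett:10}) this gives $p_{ii}(s)\,p_{i\mathcal{A}}(h^*-s)\ge p_{i\mathcal{A}}(h^*)-(1-e^{-q_i s})$. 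On the good event we have $S_i\ge \E^i[S_i]-\delta m=(p_{ii}(s)-\delta)m$ by~\eqref{eq:e-si}, so, bounding $p_{i\mathcal{A}}(h^*-s)\le 1$,
$$
\E^i[\widehat N_\mathcal{A}\mid S_i]=S_i\,p_{i\mathcal{A}}(h^*-s)\ge \bigl(p_{ii}(s)-\delta\bigr)m\,p_{i\mathcal{A}}(h^*-s)\ge \Bigl(p_{i\mathcal{A}}(h^*)-(1-e^{-q_i s})-\delta\Bigr)m.
$$

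\textbf{Conditional concentration and conclusion.} Conditioned on $S_i$, $\widehat N_\mathcal{A}$ is a sum of $S_i\le m$ independent $\{0,1\}$‑valued random variables, so Hoeffding's inequality gives $\P^i\!\left[\widehat N_\mathcal{A}<\E^i[\widehat N_\mathcal{A}\mid S_i]-\delta m\,\middle|\,S_i\right]\le \exp(-2\delta^2 m^2/S_i)$; using $S_i\le(\,p_{ii}(s)+\delta\,)m\le(1+\delta)m$ on the good event, this is at most $\exp\bigl(-\tfrac{2\delta^2}{1+\delta}m\bigr)$. Combining with the mean bound from the previous paragraph, off an event of conditional probability at most $\exp\bigl(-\tfrac{2\delta^2}{1+\delta}m\bigr)$ we have $\widehat N_\mathcal{A}\ge \bigl(p_{i\mathcal{A}}(h^*)-(1-e^{-q_i s})-2\delta\bigr)m$, hence the same lower bound for $N^{k,s}_\mathcal{A}$. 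Averaging this conditional estimate over the value of $S_i$ (conditioned on the good event) yields the claim.

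\textbf{Main obstacle.} The only delicate point is the two‑level randomness: $\widehat N_\mathcal{A}$ is a sum of independent variables only \emph{after} conditioning on $S_i$, so concentration must be invoked at that level and then averaged, while the control of $S_i$ from the good event must be used in two complementary ways --- the lower bound $S_i\ge(p_{ii}(s)-\delta)m$ to keep the conditional mean large, and the upper bound $S_i\le(1+\delta)m$ to produce the stated exponent. (The precise constant $\tfrac{2\delta^2}{1+\delta}$ is not sharp; a plain Hoeffding bound with the trivial $S_i\le m$ already gives the stronger $\exp(-2\delta^2 m)$.)
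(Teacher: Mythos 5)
Your proof is correct and follows essentially the same route as the paper's: lower-bound $N^{k,s}_\mathcal{A}$ by $\widehat N_\mathcal{A}$, apply Hoeffding to the conditional binomial $\mathrm{Bin}\bigl(S_i,\,p_{i\mathcal{A}}(h^*-s)\bigr)$, approximate $p_{i\mathcal{A}}(h^*)$ by $p_{ii}(s)\,p_{i\mathcal{A}}(h^*-s)$ via Chapman--Kolmogorov with error $1-e^{-q_i s}$, and exploit the good-event control of $S_i$ both from below (to keep the conditional mean large) and from above (to get the exponent $\tfrac{2\delta^2}{1+\delta}$), exactly as in the paper's Steps 1--3. Your reading of the conditioning event as the complement (good) event is also what the paper's own proof implements---it bounds $\P^i\bigl[\mathcal{E}^1_\delta \,\big|\, (\mathcal{E}^0_\delta)^c\bigr]$---so there is no gap.
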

\begin{proof}
We proceed in three steps:
\begin{enumerate}
\item {\bf Conditional control of $\widehat{N}_\mathcal{A}$.} 
Condition on $S_i$.
Define the event
$$
\mathcal{E}^1_\delta 
=
\left\{
\widehat{N}_\mathcal{A} 
< \E[\widehat{N}_\mathcal{A}\,|\,S_i] 
-
\delta m
\right\}.
$$
Here
\begin{align}\label{eq:e-hatnj}
\E\left[\widehat{N}_\mathcal{A}\,\middle|\,S_i\right] 
= p_{i\mathcal{A}}(h^* - s)\,  S_i.
\end{align}
By Hoeffding's inequality~\cite{Hoeffding:63},
we then have
\begin{align}
\P
\left[
\mathcal{E}^1_\delta 
\,\middle|\,
S_i
\right] 
&\leq 
\exp
\left(
- 2 \frac{\delta^2 m^2}{S_i} 
\right).\label{eq:error-1}
\end{align}

\item {\bf Approximation of $p_{i\mathcal{A}}(h^*)$.} 
By~\eqref{eq:e-si} and~\eqref{eq:e-hatnj},
the expectation of $\widehat{N}_\mathcal{A}$ is
$$
\E^i\left[\widehat{N}_\mathcal{A}\right]
= p_{i\mathcal{A}}(h^* - s)\,p_{ii}(s)\, m.
$$
To relate it to the expectation of $N_\mathcal{A}^{k,s}$,
we note that
$$
p_{i\mathcal{A}}(h^*)
=
\sum_{k \in \mathcal{S}}
p_{i k}(s)\, p_{k\mathcal{A}} (h^* - s)
=
p_{i i}(s)\, p_{i\mathcal{A}} (h^* - s)
+
\sum_{k \neq i}
p_{i k}(s)\, p_{k\mathcal{A}} (h^* - s),
$$
so
\begin{align}
\left|
p_{i\mathcal{A}}(h^*)
-
p_{i\mathcal{A}}(h^* - s)\,p_{ii}(s)
\right|
&\leq \sum_{k \neq i}
p_{i k}\, (s) p_{k\mathcal{A}} (h^* - s)\nonumber\\
&\leq \sum_{k \neq i}
p_{i k}(s)\nonumber\\
&= 
1- p_{ii}(s)\nonumber\\
&\leq 1- e^{-q_i s}.\label{eq:approx-pijl}
\end{align}

\item {\bf Overall error.}
Under the event $(\mathcal{E}^0_\delta)^c \cap (\mathcal{E}^1_\delta)^c$, we have 
by~\eqref{eq:e-si} and~\eqref{eq:e-hatnj}
that
$$
\widehat{N}_\mathcal{A} \geq  p_{i\mathcal{A}}(h^* - s)\,p_{ii}(s)\,m 
- 2 \delta m,
$$
where we used $p_{ii}(s) \leq 1$.
In turn, by~\eqref{eq:approx-pijl} and ~\eqref{eq:approx-nkj},
\begin{align*}
N_\mathcal{A}^{k,s}
-
p_{i\mathcal{A}}(h^*)\,m
&\geq
N_\mathcal{A}^{k,s}
-
p_{i\mathcal{A}}(h^* - s)\,p_{ii}(s)\,m
- (1-e^{-q_i s}) m\\
&\geq 
N_\mathcal{A}^{k,s}
-
\widehat{N}_\mathcal{A}
- (1- e^{-q_i s}) m - 2 \delta m\\
&\geq 
- (1- e^{-q_i s}) m - 2 \delta m.
\end{align*}
Define the event
$$
\mathcal{E}^2_\delta
=
\left\{
N_\mathcal{A}^{k,s}
<
p_{i\mathcal{A}}(h^*)\,m
- 
[(1- e^{-q_i s})  + 2 \delta] m
\right\}.
$$
Thus, by the above,
\begin{align*}
\P^i
[
\mathcal{E}^2_\delta
]
&\leq
\P^i
[
\mathcal{E}^0_\delta 
\cup 
\mathcal{E}^1_\delta
]\\
&\leq
\P^i
[
\mathcal{E}^0_\delta 
]
+
\P^i
[
\mathcal{E}^1_\delta
\cap (\mathcal{E}^0_\delta)^c 
]\\
&\leq
\P^i
[
\mathcal{E}^0_\delta 
]
+
\P^i
[
\mathcal{E}^1_\delta\,
|\,(\mathcal{E}^0_\delta)^c 
]\\
&\leq 
\frac{1- e^{-q_i s}}{\delta^2}
+
\exp
\left(
- 2 \frac{\delta^2 m^2}{[p_{ii}(s)+\delta]m} 
\right)\\
&\leq 
\frac{1- e^{-q_i s}}{\delta^2}
+
\exp
\left(
-  \frac{2 \delta^2}{1+\delta} m
\right)
\end{align*}
by~\eqref{eq:error-0}
and~\eqref{eq:error-1}.
\end{enumerate}
That concludes the proof.\end{proof}

\subsection{Analysis of root estimator}
\label{sec:bound-estimator}

We now describe our root estimator. 
In fact, we construct a randomized estimator
(which can be made deterministic by choosing for each input the output most likely to be correct.)
We restrict ourselves to a subset of root states that has high probability under $\pi$ and
we estimate the frequencies of events achieving the total variation distance between the leaf distributions given different root states. 
Fix $\epsilon > 0$ and let $\Lambda = \Lambda_\epsilon$.

\paragraph{Root estimator}
Our root estimator 
$G_k^{\Lambda} : \mathcal{S}^{\partial T^k} \to \mathcal{S}$
is defined as follows. Let $N^{k,s}_{\mathcal{A}}$
and $m$
be defined as in the previous subsection.
\begin{itemize}
	\item Define
	$$
	\Delta
	=
	\inf_{i_1 \neq i_2 \in \Lambda}
	\|
	\mathbf{p}^{i_1}(h^*) 
	- 
	\mathbf{p}^{i_2}(h^*)
	\|_\tv.
	$$
	
	\item For every
	distinct pair of states $i_1, i_2 \in \Lambda$, let $\mathcal{A}_{i_1 \to i_2} \subseteq \mathcal{S}$ be an event achieving the
	total variation distance between $\mathbf{p}^{i_1}(h^*)$ and $\mathbf{p}^{i_2}(h^*)$, that is,
	$$
	\|
	\mathbf{p}^{i_1}(h^*) 
	- 
	\mathbf{p}^{i_2}(h^*)
	\|_\tv
	= 
	p_{i_1,\mathcal{A}_{i_1 \to i_2}}(h^*)
	-
	p_{i_2,\mathcal{A}_{i_1 \to i_2}}(h^*)
	> 0,
	$$
	where we also require that $\mathcal{A}_{i_1 \to i_2} = \mathcal{A}_{i_2 \to i_1}^c$.
	
	\item We let $G^{\Lambda}_k(X^k_{\partial T^k})$ be the state 
	$i$ passing the following tests
	\begin{equation}\label{eq:root-estimator}
	\frac{
		N^{k,s}_{\mathcal{A}_{i \to i'}}
	}
	{
		m
	}
	> 
	p_{i \mathcal{A}_{i \to i'}}(h^*)
	- \frac{\Delta}{2},
	\qquad \forall i' \neq i,
	\end{equation}
	if such a state exists; otherwise
	we let $G^{\Lambda}_k(X^k_{\partial T^k})$ be a state chosen uniformly at random in $\Lambda$.
\end{itemize}
Observe that at most one
state can satisfy the condition in~\eqref{eq:root-estimator}. Indeed,
for any $i \neq i'$, if
$$
\frac{
N^{k,s}_{\mathcal{A}_{i \to i'}}
}
{
m
}
> 
p_{i \mathcal{A}_{i \to i'}}(h^*)
- \frac{\Delta}{2}
$$
then 
$$
\frac{
N^{k,s}_{\mathcal{A}_{i' \to i}}
}
{
m
}
=
1 - \frac{
N^{k,s}_{\mathcal{A}_{i \to i'}}
}
{
m
}
< 
1
-
p_{i \mathcal{A}_{i \to i'}}(h^*)
+ \frac{\Delta}{2}
=
p_{i \mathcal{A}_{i' \to i}}(h^*)
+ \frac{\Delta}{2}
< p_{i' \mathcal{A}_{i' \to i}}(h^*)
- \frac{\Delta}{2},
$$
where we used the definition of $\Delta$ and the fact that $\mathcal{A}_{i \to i'} = \mathcal{A}_{i' \to i}^c$.
Observe also that $G_k^{\Lambda}$ is randomized
as a function of $X^k_{\partial T^k}$ since it depends on the states at the leaves of the extension $\widehat{T}^{k,s}$.

\paragraph{Analysis}
We now prove our main result of this section.

\begin{proof}[Proof of Proposition~\ref{prop:conv-rate}]
Let $F_k
= G^{\Lambda_\epsilon}_k$ be the estimator defined above,
let the events $\mathcal{A}_{i\to i'}$ be as defined above and let $i$ be the state at the root.
By Lemmas~\ref{lem:control-si} and~\ref{lem:nkj},
\begin{align*}
\P^{i}
\left[F_k(X^k_{\partial T^{k}})
\neq i\right]
&=\P^i\left[
\exists i' \neq i,\ 
N_{\mathcal{A}_{i \to i'}}^{k,s}
\leq
p_{i\mathcal{A}_{i \to i'}}(h^*)\,m
- 
\frac{\Delta_{\epsilon}}{2} m
\right]\\
&\leq
\P^i\left[
\exists i' \neq i,\ 
N_{\mathcal{A}_{i \to i'}}^{k,s}
\leq
p_{i\mathcal{A}_{i \to i'}}(h^*)\,m
- 
[(1- e^{-q_i s})  + 2 \delta] m
\right]\\
&\leq
\frac{1- e^{-q_i s}}{\delta^2}
+
n_\epsilon \exp
\left(
-  \frac{2 \delta^2}{1+\delta} m
\right),
\end{align*}
provided $0<2\delta< \frac{\Delta_\epsilon}{2}-(1- e^{-q^*_\epsilon  s})$. 
Take
$
\delta 
=
\frac{\Delta_\epsilon}{8}
$
and 
$s$ small enough that
$1- e^{-q^*_\epsilon s}
\leq 
\Delta_{\epsilon}/4$.
The result follows.
Note finally that,
if $1- e^{-q^*_\epsilon s}
\leq 
\Delta_{\epsilon}/4$
fails, then the bound in Proposition~\ref{prop:conv-rate} is trivially true as the RHS is then larger than $1$.
We leave that condition implicit in the statement.
\end{proof}

\subsection{Uniform chains: minimax error bound}
\label{sec:uniform}

Here we consider chains with unformly bounded rates. 
We give a minimax error bound, that is, a bound uniform in the root state.
We observe in Appendix~\ref{sec:ident} that 
\begin{equation}
\Delta_{Q,h^*}
=
\inf_{i\neq j}\|\mathbf{p}^{i}(h^*)-\mathbf{p}^{j}(h^*)\|_{\tv}\geq \exp{(-h^*\|Q\|)}>0,
\end{equation}
where
$$
\|Q\| 
= \sup_{i} \sum_j |q_{ij}| 
= \sup_{i} 2 q_i < +\infty.
$$
Let
$$
q^* 
= 
\sup_{i \in \mathcal{S}} \,(q_i \lor 1) < +\infty,
$$
and
$$
f_*
=
e^{-q^* h^*}.
$$
Note that
\begin{equation}
\label{eq:fstar}
f^2_*
= e^{- 2 q^* h^*}
\leq \Delta_{Q,h^*}.
\end{equation}
We prove the following proposition, which implies
the second part of Theorem~\ref{thm:2}.
\begin{prop}[Minimax error bound for uniform chains]
\label{prop:conv-rate-unif}
Fix $k \geq 1$. There exist universal constants $C^U_0, C^U_1, C^U_2 > 0$ and an estimator $F^U_k$
such that the following holds.
For all $s > 0$ and all $i$,
\begin{equation*}
\P^{i}
\left[
F^U_k(X^k_{\partial T^{k}})
\neq 
X^k_{\rho}
\right]
<
C^U_0\,  f_*^{-4}\, q^*\, s
+
C^U_2 f_*^{-1} \exp
\left(
-  C^U_1\, f_*^4\, |\partial T^k(s)|
\right).
\end{equation*}
\end{prop}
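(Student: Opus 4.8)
The plan is to adapt the argument already used for Proposition~\ref{prop:conv-rate}, but to track all constants uniformly in the root state $i$ using the uniform rate bound $q^\ast$ and the uniform separation estimate $\Delta_{Q,h^\ast}\ge f_\ast^2$ from~\eqref{eq:fstar}. The key point is that for a uniform chain we no longer need to truncate the state space at level $n_\epsilon$: the quantity $q_\epsilon^\ast$ is replaced throughout by $q^\ast$, and $\Delta_\epsilon$ is replaced by the global lower bound $f_\ast^2\le\Delta_{Q,h^\ast}$. Concretely, I would define the estimator $F^U_k$ exactly as $G^{\Lambda}_k$ was defined in Section~\ref{sec:bound-estimator}, except that the relevant index set is now all of $\mathcal{S}$: for every ordered pair $i_1\ne i_2$ in $\mathcal{S}$, pick a set $\mathcal{A}_{i_1\to i_2}$ achieving $\|\mathbf{p}^{i_1}(h^\ast)-\mathbf{p}^{i_2}(h^\ast)\|_\tv$ with $\mathcal{A}_{i_1\to i_2}=\mathcal{A}_{i_2\to i_1}^c$, run the combinatorial construction of Section~\ref{sec:root-estimator} to produce the stretched restriction $\hat{T}^{k,s}$ with $m=|\partial T^k(s)|$ leaves, and declare $F^U_k$ to output the unique state $i$ (if any) passing the tests $N^{k,s}_{\mathcal{A}_{i\to i'}}/m > p_{i\mathcal{A}_{i\to i'}}(h^\ast)-\tfrac12 f_\ast^2$ for all $i'\ne i$, and an arbitrary state otherwise. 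As before, the uniqueness of such a state follows from $\mathcal{A}_{i\to i'}=\mathcal{A}_{i'\to i}^c$ together with the fact that $f_\ast^2$ is a \emph{uniform} lower bound on all pairwise total variation distances, so the argument displayed after~\eqref{eq:root-estimator} goes through verbatim with $\Delta$ replaced by $f_\ast^2$.

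The analysis then runs as in the proof of Proposition~\ref{prop:conv-rate}: conditioning on root state $i$, the event $\{F^U_k\ne i\}$ is contained in $\bigcup_{i'\ne i}\{N^{k,s}_{\mathcal{A}_{i\to i'}}\le p_{i\mathcal{A}_{i\to i'}}(h^\ast)m-\tfrac12 f_\ast^2 m\}$. Lemma~\ref{lem:nkj} (applied with the single set $\mathcal{A}=\mathcal{A}_{i\to i'}$ and a parameter $\delta$) bounds each such event, provided $0<2\delta<\tfrac12 f_\ast^2-(1-e^{-q^\ast s})$, by $(1-e^{-q^\ast s})/\delta^2+\exp(-\tfrac{2\delta^2}{1+\delta}m)$; note $q_i\le q^\ast$ makes $1-e^{-q_i s}\le 1-e^{-q^\ast s}\le q^\ast s$ uniformly in $i$. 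A union bound over $i'\ne i$ would naively cost a factor of the (possibly infinite) state space size, so instead I would union-bound only over those $i'$ for which the test can plausibly fail: since $\widehat N_{\mathcal{A}_{i\to i'}}$ is $\mathrm{Bin}(S_i,p_{i\mathcal{A}_{i\to i'}}(h^\ast-s))$ conditioned on $S_i$, and $p_{i\mathcal{A}_{i\to i'}}(h^\ast-s)\ge f_\ast^2$ (the achieving set has positive mass at least $\Delta_{Q,h^\ast}\ge f_\ast^2$ under $\mathbf p^{i}(h^\ast-s)$... ), one needs a finer observation. The clean route is to bound $\P^i[F^U_k\ne i]$ not by a union over $i'$ but directly: fix the (at most one) competitor determined by the data, or more simply observe that the failure event $\{\exists i': N^{k,s}_{\mathcal{A}_{i\to i'}}/m\le p_{i\mathcal{A}_{i\to i'}}(h^\ast)-\tfrac12 f_\ast^2\}$ is contained in the single event that $\|\mathbf N^{k,s}/m-\mathbf p^{i}(h^\ast)\|_{\infty\text{-type}}$ exceeds $\tfrac14 f_\ast^2$ measured against the finite collection of sets $\{\mathcal{A}_{i\to i'}\}$, and then note that once $\|\mathbf N^{k,s}/m-\mathbf p^i(h^\ast)\|_\tv<\tfrac14 f_\ast^2$ all tests pass. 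Controlling $\|\mathbf N^{k,s}/m-\mathbf p^i(h^\ast)\|_\tv$ directly via the $\|\cdot\|_\ast$ norm (as in the proof of Lemma~\ref{lemma:T:Consistent}) would reintroduce the non-exponential Chebyshev bound; instead I would control it through the total number of ``wrong'' leaves, $m-\widehat N_{\{i\}'s\ \text{agreement}}$, i.e.\ apply Lemmas~\ref{lem:control-si} and~\ref{lem:nkj} with the \emph{single} set $\mathcal{A}=\mathcal{S}\setminus\{i\}$ (equivalently, track $\widehat N$ restricted to the leaves whose pre-stretch state at $z$ was $i$) to get that with the stated probability at most $f_\ast^2/4$ fraction of the $m$ leaves deviate, which simultaneously validates all the tests.

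Carrying this through with the choices $\delta=f_\ast^2/32$ and $s$ small enough that $1-e^{-q^\ast s}\le f_\ast^2/16$, the bound from Lemma~\ref{lem:nkj} becomes
\begin{equation*}
\P^i[F^U_k(X^k_{\partial T^k})\ne i]
\le \frac{1-e^{-q^\ast s}}{(f_\ast^2/32)^2}
+ C\exp\!\left(-\frac{2(f_\ast^2/32)^2}{1+f_\ast^2/32}\,m\right)
\le C^U_0 f_\ast^{-4} q^\ast s + C^U_2 f_\ast^{-1}\exp\!\left(-C^U_1 f_\ast^4 m\right),
\end{equation*}
uniformly in $i$, which is exactly the claimed inequality (the $f_\ast^{-1}$ prefactor absorbs the constant $C$ and is harmless; when the smallness condition on $s$ fails the RHS exceeds $1$ and the bound is trivial, as noted for Proposition~\ref{prop:conv-rate}). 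The main obstacle I anticipate is precisely the union-bound issue over the competing states $i'$: on an infinite state space one cannot afford a factor $|\mathcal{S}|$, so the argument must be reorganized so that a \emph{single} concentration event — ``few leaves carry the wrong state relative to the root'' — simultaneously certifies all of the pairwise tests, which is possible because passing the test for competitor $i'$ only requires that the empirical frequency $N^{k,s}_{\mathcal{A}_{i\to i'}}/m$ be within $\Delta/2=f_\ast^2/2$ of its mean and the set of leaves in the "correct" state $i$'s image already pins all these frequencies down to within $1-p_{ii}(s)+(\text{Hoeffding slack})$. Everything else is a bookkeeping exercise identical in structure to Section~\ref{section:rate}, with $q^\ast$ and $f_\ast^2$ in place of $q^\ast_\epsilon$ and $\Delta_\epsilon$, and with the $\epsilon$ term dropped since no state-space truncation is needed.
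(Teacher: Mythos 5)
Your reduction of the problem is fine up to the point you yourself flag as the main obstacle, but the device you propose to overcome it does not work, and it is exactly here that the paper does something different. With $\Lambda=\mathcal{S}$ the error event is contained in $\bigcup_{i'\neq i}\{N^{k,s}_{\mathcal{A}_{i\to i'}}/m\le p_{i\mathcal{A}_{i\to i'}}(h^*)-\tfrac12 f_*^2\}$, a union over a collection of test sets that is \emph{infinite} when $\mathcal{S}$ is infinite (your phrase ``the finite collection of sets $\{\mathcal{A}_{i\to i'}\}$'' is not correct). Certifying all these tests by a single event, as you propose, amounts to controlling $\sup_{\mathcal{A}\subseteq\mathcal{S}}\bigl(p_{i\mathcal{A}}(h^*)-N^{k,s}_{\mathcal{A}}/m\bigr)$, which by the set characterization in~\eqref{eq:def-tv} is precisely the total variation distance between the empirical leaf-state distribution and $\mathbf{p}^i(h^*)$. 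On a countable state space this quantity cannot be forced below $f_*^2/4$ with probability $1-C e^{-c f_*^4 m}$ using only $q^*$ and $h^*$: the uniformity hypothesis guarantees $p_{ii}(h^*)\ge f_*$, but the remaining mass, which can be as large as $1-f_*$, may be spread over far more than $m$ states, in which case the empirical measure misses most of it and its total variation distance to $\mathbf{p}^i(h^*)$ stays of order $1-f_*$ with probability essentially one, no matter how large $m$ is. The fallback version of your argument fails for the same reason: the fraction of leaves not in state $i$ is typically of order $1-p_{ii}(h^*)$, not $O(f_*^2)$, so ``few wrong leaves'' is not available, and conditioning on the leaves descending from $\mathcal{W}_i$ just reproduces the same uniform-concentration problem for the binomials $\widehat{N}_{\mathcal{A}_{i\to i'}}$ over infinitely many sets.

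The paper's proof circumvents this not by testing against all of $\mathcal{S}$ but by first using the data to select a \emph{finite} candidate set $\widehat{\Lambda}=\{j:\,N^{k,s}_j/|\partial T^k(s)|\ge f_*/2\}$ and setting $F^U_k=G_k^{\widehat{\Lambda}}$. Lemma~\ref{lem:hatlambda} shows that, on $\mathcal{E}^0_\delta$, with probability $1-O(f_*^{-1})e^{-f_*^2 m/64}$ the set $\widehat{\Lambda}$ contains the root state $i$ (since $p_{ii}(h^*)\ge f_*$) and contains no state of $\mathcal{J}_{i,f_*/3}=\{j:p_{ij}(h^*)\le f_*/3\}$; the infinitely many low-probability states are handled there by partitioning $\mathcal{J}_{i,f_*/3}$ into at most $6/f_*+1$ blocks of mass at most $f_*/3$ and applying Lemma~\ref{lem:nkj} blockwise. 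Consequently only the competitors in $\mathcal{J}^c_{i,f_*/3}$, of which there are at most $3/f_*$, ever need to be beaten, and Lemma~\ref{lem:full-set} union-bounds the pairwise tests over this set alone; this finite union is the origin of the $f_*^{-1}$ prefactor in the statement. Your write-up is missing this selection/partition step (or some substitute for it), and without it the exponential bound you claim does not follow.
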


\paragraph{Root estimator}
We modify the root estimator from Section~\ref{sec:bound-estimator}.
We use the same estimator $G_k^{\Lambda}$,
but we choose a set $\Lambda$ depending
on the leaf states of the extended restriction. 
More precisely,
fix $k \geq 1$ and $s > 0$.
Recall the definitions of $\widehat{T}^{k,s}$ and $N^{k,s}_\mathcal{A}$
from Section~\ref{sec:bound-deviation}.
When $\mathcal{A} = \{j\}$, we write 
$N^{k,s}_{j}$ for $N^{k,s}_{\{j\}}$.
Our modified
estimator is defined as follows.
We let
$$
\widehat{\Lambda}
=
\left\{
j \in \mathcal{S}	
\,:\,
\frac{N^{k,s}_{j}}{|\partial T^k(s)|} \geq \frac{1}{2} f_*
\right\},
$$
and we set
$
F^U_k
= G_k^{\widehat{\Lambda}}.
$

\paragraph{Analysis}
Let $i$ be the state at the root.
Recall the definitions of $S_i$ and $\mathcal{E}^0_\delta$
from Section~\ref{sec:bound-deviation}.
We show first that, conditioned on $\mathcal{E}^0_\delta$,
the set $\widehat{\Lambda}$ is highly likely
to contain $i$, but highly unlikely to contain any state with low enough probability at the leaves. For $\alpha \in [0,1]$, define
$$
\mathcal{J}_{i,\alpha}
=
\left\{
j \in \mathcal{S}
\,:\,
p_{ij}(h^*)
\leq 
\alpha
\right\}.
$$
We write $\mathcal{J}^c_{i,\alpha}$ for $\mathcal{S} \setminus \mathcal{J}_{i,\alpha}$.
Let $m = |\partial T^k(s)|$.
\begin{lemma}[Properties of $\widehat{\Lambda}$]
	\label{lem:hatlambda}
We have
\begin{equation}
\label{eq:hatlambda-i}
\P^i\left[
i \notin
\widehat{\Lambda}
\,\middle|\,
\mathcal{E}^0_\delta
\right]
\leq
\exp\left(
-\frac{f_*^2}{64} m
\right),
\end{equation}
and
\begin{equation}
\label{eq:hatlambda-jf3}
\P^i\left[
\mathcal{J}_{i,f_*/3}
\cap
\widehat{\Lambda}
\neq
\emptyset
\,\middle|\,
\mathcal{E}^0_\delta
\right]
\leq
\left(6 f_*^{-1} + 1\right)
\,
\exp\left(
-\frac{f_*^2}{64} m
\right),
\end{equation}
provided
$1 - e^{-q^* s} \leq f_*/4$
and $\delta \leq f_*/8$.
\end{lemma}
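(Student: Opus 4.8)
The plan is to condition throughout on the set $\mathcal{W}_i$ of truncation vertices of $\partial T^k(s)$ carrying the root state $i$, so that $S_i=|\mathcal{W}_i|$ and, given $\mathcal{W}_i$, the count $\widehat{N}_j$ of leaves of $\hat{T}^{k,s}$ that descend from $\mathcal{W}_i$ and end in state $j$ is a sum of $S_i$ independent $[0,1]$-indicators with success probabilities $p_{ij}(h^*-\ell)$, $\ell\le s$; this is combined with the sandwich $\widehat{N}_j\le N^{k,s}_j\le\widehat{N}_j+(m-S_i)$ from~\eqref{eq:approx-nkj} and fed into Hoeffding's inequality. Working on $(\mathcal{E}^0_\delta)^c$ (the event on which $S_i$ concentrates, which is what makes $\mathcal{E}^0_\delta$ useful here), the bound $|S_i-p_{ii}(s)m|\le\delta m$ together with $p_{ii}(s)\ge e^{-q^*s}\ge 1-(1-e^{-q^*s})$ and the hypotheses $1-e^{-q^*s}\le f_*/4$, $\delta\le f_*/8$ give the deterministic estimates $S_i\ge(1-\tfrac38 f_*)m$ and $m-S_i\le\tfrac38 f_*m$. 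I will also use, for $t\in[h^*-s,h^*]$, the Chapman--Kolmogorov bound $p_{ij}(h^*)\ge p_{ii}(h^*-t)\,p_{ij}(t)\ge f_*\,p_{ij}(t)$, so that $p_{ij}(t)\le p_{ij}(h^*)/f_*$, together with the elementary estimates $p_{ii}(h^*-s)\ge e^{-q^*h^*}=f_*$ and $|p_{ij}(t)-p_{ij}(h^*)|\le 1-e^{-q^*s}\le f_*/4$.

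For~\eqref{eq:hatlambda-i}: since $N^{k,s}_i\ge\widehat{N}_i$, the event $\{i\notin\widehat{\Lambda}\}=\{N^{k,s}_i<\tfrac12 f_*m\}$ is contained in $\{\widehat{N}_i<\tfrac12 f_*m\}$. Given $\mathcal{W}_i$, the conditional mean of $\widehat{N}_i$ is at least $f_*S_i\ge f_*(1-\tfrac38 f_*)m\ge\tfrac58 f_*m$ on $(\mathcal{E}^0_\delta)^c$ (using $f_*\le1$), so the threshold $\tfrac12 f_*m$ is undershot by at least $\tfrac18 f_*m$; Hoeffding's inequality applied to the $S_i\le m$ indicators gives a conditional probability at most $\exp\!\bigl(-2(\tfrac18 f_*m)^2/m\bigr)=\exp(-\tfrac1{32}f_*^2m)\le\exp(-\tfrac1{64}f_*^2m)$, and this bound is uniform over the admissible values of $\mathcal{W}_i$, hence survives averaging.

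For~\eqref{eq:hatlambda-jf3}: I would split $\mathcal{J}_{i,f_*/3}$ according to the size of $p_{ij}(h^*)$. Only $O(f_*^{-1})$ states have $p_{ij}(h^*)$ above a fixed multiple of $f_*$ (since $\sum_j p_{ij}(h^*)=1$), and for each such $j$ the event $\{j\in\widehat{\Lambda}\}$ forces, on $(\mathcal{E}^0_\delta)^c$, the large deviation $\widehat{N}_j\ge\tfrac12 f_*m-(m-S_i)\ge\tfrac18 f_*m$ of the binomial $\widehat{N}_j$, whose conditional mean $p_{ij}(h^*-s)\,S_i\le(p_{ij}(h^*)/p_{ii}(s))\,m$ falls below $\tfrac18 f_*m$ with margin of order $f_*m$ once the split threshold is small enough; a union bound over these states (in the spirit of Lemma~\ref{lem:nkj}) supplies the prefactor $6f_*^{-1}$ in front of $\exp(-\tfrac1{64}f_*^2m)$. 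The remaining, possibly infinite, family of states with very small $p_{ij}(h^*)$ cannot be union-bounded and must be handled in aggregate: any $j$ with $N^{k,s}_j\ge\tfrac12 f_*m$ forces $\widehat{N}_j\ge\tfrac18 f_*m$ on $(\mathcal{E}^0_\delta)^c$, while $\sum_j\widehat{N}_j=S_i\le m$ lets at most $8f_*^{-1}$ states do so; combining this deterministic scarcity with a single Hoeffding bound on the number of leaves of $\hat{T}^{k,s}$ landing in low-probability states yields the extra $+1$ term, and assembling the two contributions gives~\eqref{eq:hatlambda-jf3}.

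The step I expect to be the main obstacle is precisely this aggregate treatment of the infinite low-probability family: a naive union bound over $\mathcal{J}_{i,f_*/3}$ diverges, so the argument has to trade the union bound for the conservation identity $\sum_j N^{k,s}_j=m$ (equivalently, that at most $2f_*^{-1}$ states can have leaf-frequency $\ge\tfrac12 f_*$) together with the smallness of $m-S_i$ on $(\mathcal{E}^0_\delta)^c$. A secondary, purely computational, point is the choice of the large/small split threshold and the verification, using $1-e^{-q^*s}\le f_*/4$ and $\delta\le f_*/8$, that every Hoeffding gap appearing above is genuinely bounded below by a positive multiple of $f_*m$.
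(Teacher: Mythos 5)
Your treatment of \eqref{eq:hatlambda-i} is essentially the paper's: apply the sandwich $\widehat{N}_{\{i\}}\le N^{k,s}_i\le\widehat{N}_{\{i\}}+(m-S_i)$, the concentration of $S_i$, $p_{ii}(h^*)\ge f_*$, and Hoeffding --- i.e.\ exactly Lemma~\ref{lem:nkj} with $\mathcal{A}=\{i\}$. That part is fine.

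For \eqref{eq:hatlambda-jf3}, however, your handling of the infinite family of very-low-probability states has a genuine gap. Your two ingredients are (a) the deterministic scarcity statement that $\sum_j\widehat{N}_j=S_i\le m$ allows at most $O(f_*^{-1})$ states to satisfy $\widehat{N}_j\ge f_*m/8$, and (b) ``a single Hoeffding bound on the number of leaves landing in low-probability states,'' i.e.\ on the aggregate count $N^{k,s}_{\mathcal{B}}$ where $\mathcal{B}$ is the set of states with tiny $p_{ij}(h^*)$. Neither converts into a bound on $\P^i[\mathcal{J}_{i,f_*/3}\cap\widehat{\Lambda}\neq\emptyset]$: (a) only bounds the \emph{cardinality} of $\widehat{\Lambda}$ in any realization, and says nothing about the probability that one of its (random) members happens to lie in $\mathcal{J}_{i,f_*/3}$; and (b) is vacuous precisely in the problematic case, because the total mass $p_{i\mathcal{B}}(h^*)$ of the low-probability states can be close to $1$ (mass spread thinly over infinitely many states), in which case $N^{k,s}_{\mathcal{B}}$ is of order $m$ with overwhelming probability and an aggregate deviation bound rules out nothing about a single state accumulating $f_*m/2$ leaves. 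The paper's proof supplies the missing idea: partition $\mathcal{J}_{i,f_*/3}$ into the smallest number $R$ of blocks $\mathcal{H}_{i,r}$ with $p_{i\mathcal{H}_{i,r}}(h^*)\le f_*/3$; minimality forces all but at most one block to have mass $>f_*/6$, so $R\le 6f_*^{-1}+1$ by $\sum_r p_{i\mathcal{H}_{i,r}}(h^*)\le 1$ (a conservation identity on transition probabilities, not on realized counts); then Lemma~\ref{lem:nkj} is applied blockwise (an upper-tail bound on each $N^{k,s}_{\mathcal{H}_{i,r}}$), a union bound is taken over the $R$ blocks, and one observes that $N^{k,s}_{\mathcal{H}_{i,r}}<f_*m/2$ forces $N^{k,s}_j<f_*m/2$ for every $j$ in the block. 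So the union bound is traded not for realized-count scarcity but for a bounded number of small-mass \emph{groups}, each controlled in aggregate. Your secondary worry about the Hoeffding margins is well founded (with the thresholds $f_*/3$, $f_*/2$ and slack $3f_*/8$ the per-block mean bound is of order $4f_*/9>f_*/8$, so the constants need tightening --- an issue the paper's own write-up shares), but that is a repairable bookkeeping matter; the block-partition device is the substantive ingredient your proposal is missing.
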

\begin{proof}
For~\eqref{eq:hatlambda-i}, let
$$
\mathcal{A} = \{i\},
$$
and
note that
$$
p_{i\mathcal{A}}(h^*)
\geq 
e^{-q^* h^*}
= f_*.
$$
By Lemma~\ref{lem:nkj}, provided 
$1 - e^{-q^* s} \leq f_*/4$ 
and $\delta \leq f_*/8$, we get
$$
\P^i
\left[
N^{k,s}_i
< \frac{f_*}{2} m
\,\middle|\,
\mathcal{E}^0_\delta
\right]
\leq
\exp\left(
-\frac{f_*^2}{64} m
\right).
$$

For~\eqref{eq:hatlambda-jf3}, consider a partition
$\bigsqcup_{r=1}^R \mathcal{H}_{i,r}$
of $\mathcal{J}_{i,f_*/3}$ into the {\em smallest} number of 
subsets with 
$$
p_{i \mathcal{H}_{i,r}}(h^*) \leq f_*/3.
$$
Observe that $R \leq 6/f_* + 1$. Indeed $\sum_{r=1}^R p_{i \mathcal{H}_{i,r}}(h^*) \leq 1$ and, if
two sets in the partition have $p_{i \mathcal{H}_{i,r}}(h^*) \leq f_*/6$, then they can be combined into one.
By Lemma~\ref{lem:nkj}, provided 
$1 - e^{-q^* s} \leq f_*/4$ 
and $\delta \leq f_*/8$, we get
$$
\P^i
\left[ \exists r,
N^{k,s}_{\mathcal{H}_{i,r}}
\geq \frac{f_*}{2} m
\,\middle|\,
\mathcal{E}^0_\delta
\right]
\leq
R
\,
\exp\left(
-\frac{f_*^2}{64} m
\right).
$$
Noting that $N^{k,s}_{\mathcal{H}_{i,r}} < f_*/2$
implies $N^{k,s}_{j} < f_*/2$ for all $j \in \mathcal{H}_{i,r}$ concludes the proof.
\end{proof}

Recall from Section~\ref{sec:bound-estimator}
the definition of the events $\mathcal{A}_{i\to i'}$.
By the previous lemma, the set $\widehat{\Lambda}$
is likely to contain only elements from $\mathcal{J}_{i,f_*/3}^c$---not necessarily all of them,
but at least the root state $i$.
We show next that under $\mathcal{E}^0_\delta$
the state $i$ is likely to be chosen against
all other states in $\mathcal{J}_{i,f_*/3}^c$ in the tests
performed under $G_k^{\mathcal{J}_{i,f_*/3}^c}$.
\begin{lemma}[Full set of potential tests]
	\label{lem:full-set}
We have
\begin{equation}
\P^i\left[
\exists i' \in \mathcal{J}_{i,f_*/3}^c\setminus\{i\},\ 
N_{\mathcal{A}_{i \to i'}}^{k,s}
\leq
p_{i\mathcal{A}_{i \to i'}}(h^*)\,m
- 
\frac{\Delta_{Q,h^*}}{2} m
\,\middle|\,
\mathcal{E}^0_\delta
\right]
\leq
3f_*^{-1} \exp
\left(
-  \frac{\Delta_{Q,h^*}^2}{64} m
\right),
\end{equation}
provided $1- e^{-q^* s}
\leq 
\Delta_{Q,h^*}/4$
and $
\delta 
\leq
\Delta_{Q,h^*}/8
$.
\end{lemma}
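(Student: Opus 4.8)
The plan is to treat Lemma~\ref{lem:full-set} as a uniform-in-$i'$ repackaging of Lemma~\ref{lem:nkj} combined with a union bound whose range is controlled by $f_*$. First I would bound the size of the index set: since the chain is conservative, $(p_{ij}(h^*))_{j\in\mathcal{S}}$ is a probability distribution on $\mathcal{S}$, so $\sum_j p_{ij}(h^*)=1$; as every $j\in\mathcal{J}_{i,f_*/3}^c$ satisfies $p_{ij}(h^*)>f_*/3$, this forces $|\mathcal{J}_{i,f_*/3}^c|\le 3f_*^{-1}$, and a fortiori $|\mathcal{J}_{i,f_*/3}^c\setminus\{i\}|\le 3f_*^{-1}$.

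Next, I would fix $i'\in\mathcal{J}_{i,f_*/3}^c\setminus\{i\}$ and apply Lemma~\ref{lem:nkj} with $\mathcal{A}=\mathcal{A}_{i\to i'}$ and $\delta=\Delta_{Q,h^*}/8$. Since $q_i\le q^*$, the hypothesis $1-e^{-q^* s}\le\Delta_{Q,h^*}/4$ yields $1-e^{-q_i s}\le\Delta_{Q,h^*}/4$, so $(1-e^{-q_i s})+2\delta\le\Delta_{Q,h^*}/2$; hence the event $\{N_{\mathcal{A}_{i\to i'}}^{k,s}\le p_{i\mathcal{A}_{i\to i'}}(h^*)\,m-\tfrac{\Delta_{Q,h^*}}{2}m\}$ is contained in the event controlled by Lemma~\ref{lem:nkj}, whose conditional probability given $\mathcal{E}^0_\delta$ is at most $\exp(-\tfrac{2\delta^2}{1+\delta}m)$. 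Using $\Delta_{Q,h^*}\le 1$ (it is a total variation distance), $\tfrac{2\delta^2}{1+\delta}=\tfrac{\Delta_{Q,h^*}^2/32}{1+\Delta_{Q,h^*}/8}\ge\tfrac{\Delta_{Q,h^*}^2}{64}$, so each $i'$ contributes at most $\exp(-\tfrac{\Delta_{Q,h^*}^2}{64}m)$. A union bound over the at most $3f_*^{-1}$ choices of $i'$ then gives the claimed bound.

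I do not expect a genuine obstacle: all the probabilistic work is already done in Lemma~\ref{lem:nkj}, and what remains is bookkeeping. The only points that need care are (i) the counting estimate $|\mathcal{J}_{i,f_*/3}^c|\le 3f_*^{-1}$, where one must remember that $\mathcal{J}_{i,f_*/3}$ is defined through the true transition probabilities $p_{ij}(h^*)$ rather than empirical frequencies; (ii) checking that the hypotheses of Lemma~\ref{lem:nkj} hold uniformly over $i'$, which is where $q_i\le q^*$ enters; and (iii) the elementary simplification of the exponent, which uses $\Delta_{Q,h^*}\le 1$.
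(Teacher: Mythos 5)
Your proposal is correct and follows essentially the same route as the paper: bound $|\mathcal{J}_{i,f_*/3}^c|\le 3f_*^{-1}$, note that under the stated conditions $(1-e^{-q_i s})+2\delta\le \Delta_{Q,h^*}/2$ so each per-state deviation event falls under Lemma~\ref{lem:nkj}, take $\delta=\Delta_{Q,h^*}/8$ so that $\tfrac{2\delta^2}{1+\delta}\ge \Delta_{Q,h^*}^2/64$, and finish with a union bound. The only difference is cosmetic: the paper also cites Lemma~\ref{lem:control-si}, which is not actually needed once one conditions on $\mathcal{E}^0_\delta$, exactly as you observed.
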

\begin{proof}
We use an argument similar to that in the proof of Proposition~\ref{prop:conv-rate}.Observe first
that
$$
\left|
\mathcal{J}_{i,f_*/3}^c
\right|
\leq \frac{3}{f_*}.
$$
By Lemmas~\ref{lem:control-si} and~\ref{lem:nkj},
\begin{align*}
&\P^i\left[
\exists i' \in \mathcal{J}_{i,f_*/3}^c\setminus\{i\},\ 
N_{\mathcal{A}_{i \to i'}}^{k,s}
\leq
p_{i\mathcal{A}_{i \to i'}}(h^*)\,m
- 
\frac{\Delta_{Q,h^*}}{2} m
\,\middle|\,
\mathcal{E}^0_\delta
\right]\\
&\qquad \leq\P^i\left[
\exists i' \in \mathcal{J}_{i,f_*/3}^c\setminus\{i\},\ 
N_{\mathcal{A}_{i \to i'}}^{k,s}
\leq
p_{i\mathcal{A}_{i \to i'}}(h^*)\,m
- 
[(1- e^{-q_i s})  + 2 \delta] m
\,\middle|\,
\mathcal{E}^0_\delta
\right]\\
&\qquad \leq
\frac{3}{f_*} \exp
\left(
-  \frac{2 \delta^2}{1+\delta} m
\right),
\end{align*}
provided $0<2\delta< \frac{\Delta_{Q,h^*}}{2}-(1- e^{-q^*  s})$. 
Take
$
\delta 
=
\frac{\Delta_{Q,h^*}}{8}
$
and 
$s$ small enough that
$1- e^{-q^* s}
\leq 
\Delta_{Q,h^*}/4$.
The result follows.
\end{proof}

Finally, we prove Proposition~\ref{prop:conv-rate-unif}.

\begin{proof}[Proof of Proposition~\ref{prop:conv-rate-unif}]
Set
$
F_k
= G_k^{\widehat{\Lambda}},
$
and
let $i$ be the root state.
We let $\mathcal{E}^3$ and $\mathcal{E}^4$ be the events
$$
\mathcal{E}^3
= 
\left\{
i \in
\widehat{\Lambda}
\right\}
\cap
\left\{
\mathcal{J}_{i,f_*/3}
\cap
\widehat{\Lambda}
=
\emptyset
\right\},
$$
and
$$
\mathcal{E}^4
= 
\left\{
\forall i' \in \mathcal{J}_{i,f_*/3}^c\setminus\{i\},\ 
N_{\mathcal{A}_{i \to i'}}^{k,s}
>
p_{i\mathcal{A}_{i \to i'}}(h^*)\,m
- 
\frac{\Delta_{Q,h^*}}{2} m
\right\}.
$$
Under $\mathcal{E}^3\cap\mathcal{E}^4$,
it holds that $F_k(X^k_{\partial T^k}) = i$.
Thus, by Lemmas~\ref{lem:control-si},~\ref{lem:hatlambda} and~\ref{lem:full-set},
\begin{align*}
\P^i[F_k(X^k_{\partial T^k}) \neq X^k_\rho]
&\leq 
\P^i[(\mathcal{E}^0_\delta)^c]
+ \P^i\left[(\mathcal{E}^3)^c\,\middle|\,\mathcal{E}^0_\delta\right]
+ \P^i\left[(\mathcal{E}^4)^c\,\middle|\,\mathcal{E}^0_\delta\right]\\
&\leq 
\frac{1- e^{-q_i s}}{\delta^2}
+
\left(6 f_*^{-1} + 2\right)
\,
\exp\left(
-\frac{f_*^2}{64} m
\right)
+
3 f_*^{-1} \exp
\left(
-  \frac{\Delta_{Q,h^*}^2}{64} m
\right)\\
&\leq 
\frac{1- e^{-q^* s}}{\delta^2}
+
11 f_*^{-1}
\exp
\left(
-  \frac{(f_* \land \Delta_{Q,h^*})^2}{64} m
\right),
\end{align*}
provided 
$\delta \leq (f_*\land \Delta_{Q,h^*})/8$
and
$1 - e^{-q^* s} \leq (f_*\land \Delta_{Q,h^*})/4$.
As we did in Proposition~\ref{prop:conv-rate},
the latter condition is implicit.
Using~\eqref{eq:fstar} concludes the proof.
\end{proof}

\section*{Acknowledgments}

We thank Tom Kurtz, Ramon van Handel, and Mykhaylo Shkolnikov for helpful discussions.

\newpage

\bibliographystyle{alpha}
\bibliography{my,thesis}

\begin{thebibliography}{BVVW11}

\bibitem[ADHR12]{andoni2012global}
Alexandr Andoni, Constantinos Daskalakis, Avinatan Hassidim, and Sebastien
  Roch.
\newblock Global alignment of molecular sequences via ancestral state
  reconstruction.
\newblock {\em Stochastic Processes and their Applications},
  122(12):3852--3874, 2012.

\bibitem[And91]{Anderson:91}
William~J. Anderson.
\newblock {\em Continuous-time {M}arkov chains}.
\newblock Springer Series in Statistics: Probability and its Applications.
  Springer-Verlag, New York, 1991.
\newblock An applications-oriented approach.

\bibitem[BCMR06]{BoChMoRo:06}
Christian Borgs, Jennifer Chayes, Elchanan Mossel, and S{\'e}bastien Roch.
\newblock The {K}esten-{S}tigum reconstruction bound is tight for roughly
  symmetric binary channels.
\newblock In {\em F{OCS}'06---{P}roceedings of the 47th {A}nnual {IEEE}
  {S}ymposium on {F}oundations of {C}omputer {S}cience}, pages 518--530, 2006.

\bibitem[BRZ95]{BlRuZa:95}
P.~M. Bleher, J.~Ruiz, and V.~A. Zagrebnov.
\newblock On the purity of the limiting {G}ibbs state for the {I}sing model on
  the {B}ethe lattice.
\newblock {\em J. Statist. Phys.}, 79(1-2):473--482, 1995.

\bibitem[BST10]{bhatnagar2010reconstruction}
Nayantara Bhatnagar, Allan Sly, and Prasad Tetali.
\newblock Reconstruction threshold for the hardcore model.
\newblock In {\em APPROX-RANDOM}, pages 434--447. Springer, 2010.

\bibitem[BVVW11]{bhatnagar2011reconstruction}
Nayantara Bhatnagar, Juan Vera, Eric Vigoda, and Dror Weitz.
\newblock Reconstruction for colorings on trees.
\newblock {\em SIAM Journal on Discrete Mathematics}, 25(2):809--826, 2011.

\bibitem[CT06]{CoverThomas:06}
Thomas~M. Cover and Joy~A. Thomas.
\newblock {\em Elements of information theory}.
\newblock Wiley-Interscience [John Wiley \& Sons], Hoboken, NJ, second edition,
  2006.

\bibitem[Dur10]{Durrett:10}
Rick Durrett.
\newblock {\em Probability: theory and examples}.
\newblock Cambridge Series in Statistical and Probabilistic Mathematics.
  Cambridge University Press, Cambridge, fourth edition, 2010.

\bibitem[EKPS00]{EvKePeSc:00}
W.~S. Evans, C.~Kenyon, Y.~Peres, and L.~J. Schulman.
\newblock Broadcasting on trees and the {I}sing model.
\newblock {\em Ann. Appl. Probab.}, 10(2):410--433, 2000.

\bibitem[Fel04]{Felsenstein:04}
J.~Felsenstein.
\newblock {\em Inferring Phylogenies}.
\newblock Sinauer, Sunderland, MA, 2004.

\bibitem[FR]{FanRoch:u}
Wai-Tong~(Louis) Fan and Sebastien Roch.
\newblock Statistically consistent and computationally efficient inference of
  ancestral dna sequences in the tkf91 model under dense taxon sampling.
\newblock Preprint, 2019.

\bibitem[GS10]{GascuelSteel:10}
Olivier Gascuel and Mike Steel.
\newblock Inferring ancestral sequences in taxon-rich phylogenies.
\newblock {\em Math. Biosci.}, 227(2):125--135, 2010.

\bibitem[HA13]{ho2013}
Lam Si~Tung Ho and Cécile Ané.
\newblock Asymptotic theory with hierarchical autocorrelation:
  Ornstein?uhlenbeck tree models.
\newblock {\em Ann. Statist.}, 41(2):957--981, 04 2013.

\bibitem[Hoe63]{Hoeffding:63}
Wassily Hoeffding.
\newblock Probability inequalities for sums of bounded random variables.
\newblock {\em J. Amer. Statist. Assoc.}, 58:13--30, 1963.

\bibitem[Iof96]{Ioffe:96a}
D.~Ioffe.
\newblock On the extremality of the disordered state for the {I}sing model on
  the {B}ethe lattice.
\newblock {\em Lett. Math. Phys.}, 37(2):137--143, 1996.

\bibitem[Ken67]{Ken66}
David~G. Kendall.
\newblock On {M}arkov groups.
\newblock In {\em Proc. {F}ifth {B}erkeley {S}ympos. {M}ath. {S}tatist. and
  {P}robability ({B}erkeley, {C}alif., 1965/66), {V}ol. {II}: {C}ontributions
  to {P}robability {T}heory, {P}art 2}, pages 165--173. Univ. California Press,
  Berkeley, Calif., 1967.

\bibitem[KS66]{KestenStigum:66}
H.~Kesten and B.~P. Stigum.
\newblock Additional limit theorems for indecomposable multidimensional
  {G}alton-{W}atson processes.
\newblock {\em Ann. Math. Statist.}, 37:1463--1481, 1966.

\bibitem[LC98]{Lehmann:98}
E.~L. Lehmann and George Casella.
\newblock {\em Theory of point estimation}.
\newblock Springer Texts in Statistics. Springer-Verlag, New York, second
  edition, 1998.

\bibitem[Lib07]{liberles2007ancestral}
David~A Liberles.
\newblock {\em Ancestral sequence reconstruction}.
\newblock Oxford University Press on Demand, 2007.

\bibitem[Lig10]{Liggett:10}
Thomas~M. Liggett.
\newblock {\em Continuous time {M}arkov processes}, volume 113 of {\em Graduate
  Studies in Mathematics}.
\newblock American Mathematical Society, Providence, RI, 2010.
\newblock An introduction.

\bibitem[LR05]{LehmannRomano:05}
E.~L. Lehmann and Joseph~P. Romano.
\newblock {\em Testing statistical hypotheses}.
\newblock Springer Texts in Statistics. Springer, New York, third edition,
  2005.

\bibitem[Mit09]{mitzenmacher2009survey}
Michael Mitzenmacher.
\newblock A survey of results for deletion channels and related synchronization
  channels.
\newblock {\em Probability Surveys}, 6:1--33, 2009.

\bibitem[Mos01]{Mossel:01}
E.~Mossel.
\newblock Reconstruction on trees: beating the second eigenvalue.
\newblock {\em Ann. Appl. Probab.}, 11(1):285--300, 2001.

\bibitem[MP03]{MosselPeres:03}
E.~Mossel and Y.~Peres.
\newblock Information flow on trees.
\newblock {\em Ann. Appl. Probab.}, 13(3):817--844, 2003.

\bibitem[Roy88]{Royden:88}
H.~L. Royden.
\newblock {\em Real analysis}.
\newblock Macmillan Publishing Company, New York, third edition, 1988.

\bibitem[Rud76]{Rudin:76}
Walter Rudin.
\newblock {\em Principles of mathematical analysis}.
\newblock McGraw-Hill Book Co., New York-Auckland-D\"usseldorf, third edition,
  1976.
\newblock International Series in Pure and Applied Mathematics.

\bibitem[Sly09]{sly2009reconstruction}
Allan Sly.
\newblock Reconstruction of random colourings.
\newblock {\em Communications in Mathematical Physics}, 288(3):943--961, 2009.

\bibitem[Sly11]{Sly:11}
Allan Sly.
\newblock Reconstruction for the potts model.
\newblock {\em Ann. Probab.}, 39(4):1365--1406, 07 2011.

\bibitem[SS99]{SteelSzekely:99}
Michael~A. Steel and L{\'a}szl{\'o}~A. Sz{\'e}kely.
\newblock Inverting random functions.
\newblock {\em Ann. Comb.}, 3(1):103--113, 1999.
\newblock Combinatorics and biology (Los Alamos, NM, 1998).

\bibitem[SS02]{SteelSzekely:02}
Michael~A. Steel and L{\'a}szl{\'o}~A. Sz{\'e}kely.
\newblock Inverting random functions. {II}. {E}xplicit bounds for discrete
  maximum likelihood estimation, with applications.
\newblock {\em SIAM J. Discrete Math.}, 15(4):562--575 (electronic), 2002.

\bibitem[SS03]{SempleSteel:03}
C.~Semple and M.~Steel.
\newblock {\em Phylogenetics}, volume~22 of {\em Mathematics and its
  Applications series}.
\newblock Oxford University Press, 2003.

\bibitem[Ste16]{Steel:16}
Mike Steel.
\newblock {\em Phylogeny---discrete and random processes in evolution},
  volume~89 of {\em CBMS-NSF Regional Conference Series in Applied
  Mathematics}.
\newblock Society for Industrial and Applied Mathematics (SIAM), Philadelphia,
  PA, 2016.

\bibitem[Tho04]{thornton2004resurrecting}
Joseph~W Thornton.
\newblock Resurrecting ancient genes: experimental analysis of extinct
  molecules.
\newblock {\em Nature reviews. Genetics}, 5(5):366, 2004.

\bibitem[TKF91]{thorne1991evolutionary}
Jeffrey~L Thorne, Hirohisa Kishino, and Joseph Felsenstein.
\newblock An evolutionary model for maximum likelihood alignment of dna
  sequences.
\newblock {\em Journal of Molecular Evolution}, 33(2):114--124, 1991.

\bibitem[Wid41]{Widder:41}
David~Vernon Widder.
\newblock {\em The {L}aplace {T}ransform}.
\newblock Princeton Mathematical Series, v. 6. Princeton University Press,
  Princeton, N. J., 1941.

\end{thebibliography}

\newpage

\appendix

\section{Identifiability of initial state on countable state spaces}
\label{sec:ident}

We establish initial-state identifiability in two broad classes of chains.

\paragraph{Uniform chains}
Assume the uniform bound $\sup_{i\in\mathcal{S}} q_{i} < \infty$, where recall that $q_i$ was defined in~\eqref{eq:def-qi}. That condition
implies that $Q$ is a bounded operator on the Banach space $\ell_1$ and that $\mathbf{P}_t=\exp{(tQ)}$ for $t\in[0,\infty)$ (see e.g.~\cite{Ken66}). It then follows that $\mathbf{P}_t$ has an inverse $\mathbf{P}_t^{-1}=\exp{(-tQ)}$ and that
\begin{equation}\label{eq:l1-q}
\inf_{x\neq 0}\frac{\|x\mathbf{P}_t\|_1}{\|x\|_1}\geq \frac{1}{\|\exp{(-tQ)}\|}\geq \exp{(-t\|Q\|)},
\end{equation}
where $\|\cdot\|_{1}$ is the usual $\ell_1$-norm and 
$$
\|Q\| = \sup_{i} \sum_j |q_{ij}| < +\infty,
$$ 
is the operator norm of $Q$. The first inequality in~\eqref{eq:l1-q} follows by putting $y=x\mathbf{P}_t$ and operator $A=\exp{(-tQ)}$ in the inequality $\|y\,A\|_1 \leq \|y\|_1\,\|A\|$. The second inequality follows from 
    $$\| \exp{(-tQ)}\| = \sum_{k\geq 0}\frac{\|(-tQ)^k\|}{k!}\leq \sum_{k\geq 0}\frac{t^k\|Q\|^k}{k!}=\exp{(t\|Q\|)}.$$
Since $\mathbf{p}^{i}(t)=\vec{e}_i\mathbf{P}_t$, we have 
	\begin{equation}\label{ell_1>0}
	\inf_{i\neq j}\|\mathbf{p}^{i}(t)-\mathbf{p}^{j}(t)\|_{\tv}\geq \exp{(-t\|Q\|)}>0,
	\end{equation}
    where we used that the total variation distance is half the $\ell_1$ norm.
This includes the finite state-space case; see~\cite[Lemma 5.1]{GascuelSteel:10} for another proof in that case.

We are unaware of a proof that initial-state identifiability holds more generally in the unbounded case. 
In particular, arguing through the inverse as above may be difficult, as it is related to the longstanding Markov group conjecture. See~\cite{Ken66}. However we argue next that, in the special case of reversible chains, initial-state identifiability does hold in general.

\paragraph{Reversible chains}

Assume now that $(\mathbf{P}_t)_t$ is reversible
(or weakly symmetric) with respect to the
positive measure $\mu$ on $\mathcal{S}$.
By Kendall's representation (see e.g.~\cite[Theorem 1.6.7]{Anderson:91}), for each pair $i, j \in \mathcal{S}$,
there is a finite signed measure $\phi_{ij}$
on $[0,\infty)$ such that
$$
p_{ij}(t)
= \sqrt{\frac{\mu_j}{\mu_i}}
\int_{[0,\infty)}
e^{-tx} d\phi_{ij}(x),
\qquad \forall t\geq 0.
$$
By Jordan decomposition,
$\phi_{ij}$ is the difference of two
finite non-negative measures (see e.g. \cite[Chapter 11]{Royden:88})
so that $p_{ij}(t)$ can be seen as the 
difference of two Laplace transforms
of non-negative measures. The latter
are absolutely convergent, and therefore, analytic on $(0,\infty)$ (see e.g.~\cite[Chapter II]{Widder:41}).
Hence all $p_{ij}(t)$s are analytic.
If two analytic functions agree on a set with a limit point, then they agree everywhere (see e.g.~\cite{Rudin:76}). 
Suppose there exists $t_0>0$ such that $p_{ij}(t_0)=p_{kj}(t_0)$ for all $j$. Then by the Chapman-Kolmogorov equations we have  
	\begin{equation}\label{E:reversible}
	p_{ij}(t)=p_{kj}(t),\qquad \forall t\geq t_0, j\in\mathcal{S}.
	\end{equation}
Then the same holds for all $t > 0$ and, by continuity at $0$, we must have $p_{ii}(0) = p_{ki}(0)$ which implies $i = k$.

\section{An application: the TKF91 process}
\label{sec:tkf}

In this section, we apply Theorem~\ref{thm:1} to ancestral sequence reconstruction in a DNA model accounting for nucleotide insertion and deletion
known as the TKF91 process. 
We first describe the Markovian dynamics. Conforming with the original definition of the model~\cite{thorne1991evolutionary},
we use an ``immortal link'' as a stand-in for the empty sequence.
\begin{definition}[TKF91 sequence evolution model on an edge]\label{Def_TKF91}	
	The  {\bf TKF91 edge process} is a Markov process $\mathcal{I}=(\mathcal{I}_t)_{t\geq 0}$ on the space $\mathcal{S}$ of DNA sequences together with an {\bf immortal link}  ``$\bullet$", that is,
	\begin{equation}\label{S}
	\mathcal{S} := ``\bullet" \otimes \bigcup_{M\geq 0} \{A,T,C,G\}^M,
	\end{equation}
	where the notation above indicates that all sequences begin with the immortal link (and can otherwise be empty).
	We also refer to the positions of a sequence (including nucleotides and the immortal link) as {\bf sites}. 
	Let $(\nu,\,\lambda,\,\mu)\in (0,\infty)^3$ with $\lambda < \mu$ and $(\pi_A,\,\pi_T,\,\pi_C,\,\pi_G)\in [0,\infty)^4$ with $\pi_A +\pi_T + \pi_C + \pi_G = 1$ be given parameters. The continuous-time Markovian dynamic is described as follows: if the current state is the sequence $\vec{x}$, then the following events occur independently:
	\begin{itemize}
		\item (Substitution)$\;$ Each nucleotide (but not the immortal link) is substituted independently at rate $\nu>0$. When a substitution occurs, the corresponding nucleotide is replaced by $A,T,C$ and $G$ with probabilities $\pi_A,\pi_T,\pi_C$ and $\pi_G$ respectively.
		
		\item (Deletion)$\;$ Each nucleotide (but not the immortal link) is removed independently at rate $\mu>0$.
		
		\item (Insertion) $\;$ Each site gives birth to a new nucleotide independently at rate $\lambda>0$. When a birth occurs, a nucleotide is  added immediately to the right of its parent site. The newborn site has nucleotide $A,T,C$ and $G$ with probabilities $\pi_A,\pi_T,\pi_C$ and $\pi_G$ respectively. 
		
	\end{itemize}
	The {\bf length} of a sequence $\vec{x}=(\bullet,x_1,x_2,\cdots,x_M)$ is defined as the number of nucleotides in $\vec{x}$ and is denoted by $|\vec{x}|=M$ (with the immortal link alone corresponding to $M=0$). When $M\geq 1$ we omit the immortal link for simplicity and write $\vec{x}=(x_1,x_2,\cdots,x_M)$.
\end{definition}
\noindent The TKF91 edge process is reversible~\cite{thorne1991evolutionary}. 
Suppose furthermore that 
$$
0 < \lambda < \mu,
$$ 
an assumption we make throughout. Then it has an {\bf stationary distribution} $\Pi$, given by
\begin{equation*}
\Pi(\vec{x})=
\left(1-\frac{\lambda}{\mu}\right) 
\left(\frac{\lambda}{\mu}\right)^M\prod_{i=1}^M\pi_{x_i} 
\end{equation*}
for each $\vec{x}=(x_1,x_2,\cdots,x_M)\in \{A,T,C,G\}^M$ where $M\geq 1$, and $\Pi(``\bullet") = \left(1-\frac{\lambda}{\mu}\right) $. In words, under $\Pi$, the sequence length is geometrically distributed and,
conditioned on the sequence length, all sites are independent with distribution $(\pi_{\sigma})_{\sigma\in \{A,T,C,G\}}$. Hence, from the argument in Section~\ref{sec:ident}, initial-state identifiability holds for the TKF91 edge process.
Theorem~\ref{thm:1} gives:
\begin{theorem}[TKF91 process: consistent root estimation]
Let $\{T^k\}_k$ satisfy assumption (i) and the big bang condition. Let $(\mathbf{P}_t)_t$ be the TKF91 edge process with $\lambda < \mu$ and let $\pi$ be the stationary distribution of the process. Then there exists a sequence of consistent
root estimators.
\end{theorem}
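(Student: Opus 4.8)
The plan is to deduce this from Theorem~\ref{thm:1} by checking that the TKF91 edge process, together with the given tree sequence and prior, satisfies all of that theorem's hypotheses, and that its condition (b) is in force.

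First I would confirm that the setup of Section~\ref{sec:defs} applies. The state space $\mathcal{S}$ in~\eqref{S} is a countable union of finite sets, hence countable, so it may be enumerated as $\{1,2,\ldots\}$. For a sequence $\vec{x}$ of length $M$, the total jump rate is $q_{\vec{x}} = M\nu + M\mu + (M+1)\lambda < \infty$ (substitutions, deletions, insertions, the last also from the immortal link), and the associated $Q$-matrix is stable and conservative; note however that $\sup_{\vec{x}} q_{\vec{x}} = \infty$, so the chain is \emph{not} uniform and the elementary matrix-exponential-inverse argument for identifiability in Appendix~\ref{sec:ident} is unavailable. Standing assumption (i) is part of the hypothesis, and since $\lambda < \mu$ the stationary measure $\Pi$ is a genuine probability distribution on $\mathcal{S}$, which will play the role of $\pi$.

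The one substantive step — and the only real obstacle — is verifying standing assumption (ii), initial-state identifiability. I would obtain this from reversibility: the TKF91 edge process is reversible with respect to $\Pi$ (classical, and transparent from the product form of $\Pi$~\cite{thorne1991evolutionary}), so the reversible-chains argument of Appendix~\ref{sec:ident} applies, giving that every $p_{ij}(t)$ is real-analytic on $(0,\infty)$ and hence that two distinct initial states cannot induce identical distributions at any positive time; equivalently~\eqref{eq:ident-tv} holds.

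With assumptions (i) and (ii) in hand, a countable state space, a valid prior $\pi = \Pi$, and the big bang condition assumed, condition (b) of Theorem~\ref{thm:1} holds, so Theorem~\ref{thm:1} directly produces a sequence of consistent root estimators. (It is not needed, but one can note that condition (a), downstream disjointness, fails here since from any sequence every other sequence is reachable; this merely confirms that (b) is the operative alternative.)
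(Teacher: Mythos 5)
Your proposal is correct and follows essentially the same route as the paper: reduce to Theorem~\ref{thm:1} under condition (b), with the only substantive verification being initial-state identifiability, which the paper also obtains from reversibility of the TKF91 edge process via the reversible-chains argument of Appendix~\ref{sec:ident}. Your additional checks (countable state space, stable and conservative $Q$-matrix, non-uniformity of the rates, and the remark that downstream disjointness fails) are sound elaborations of details the paper leaves implicit rather than a different argument.
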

\noindent In a companion paper~\cite{FanRoch:u}, we give an alternative consistent root estimator that is also computationally efficient and provide error bounds that are explicit in the parameters of the model.

\end{document}